\documentclass{article}
\usepackage[utf8]{inputenc}
\usepackage[all,2cell]{xy}

\usepackage{amsmath}
\usepackage{amsthm}
\usepackage{amsfonts}
\usepackage{amssymb}
\newcommand{\Dial}{\mathfrak{Dial}(\mathsf{C})}
\newcommand{\mC}{\mathsf{C}}
\newcommand{\mD}{\mathsf{D}}

\newcommand{\mB}{\mathsf{B}}
\newcommand{\mE}{\mathsf{E}}
\newcommand{\Coprod}{\mathfrak{Sum}}
\newcommand{\Prod}{\mathfrak{Prod}}
\newcommand{\Dialmon}{\mathfrak{Dial}}
\def\skoledoc{\xymatrix{ \mathbin{Pred}: \mathbf{T}^{op}\ar[r]& \mathsf{Pos}}}



\newcommand{\fibration}[3]{#2 \colon #1 \longrightarrow #3}
\newcommand{\opfibration}[3]{#2^{\op} \colon #1^{(\op)} \longrightarrow #3}




\newcommand{\arrow}[3]{#1 \xrightarrow{#2} #3}

\def\mE{\mathsf{E}}
\def\mB{\mathsf{B}}
\def\mp{\mathsf{p}}
\def\mq{\mathsf{q}}
\def\mC{\mathsf{C}}
\def\mD{\mathsf{D}}
\def\id{\operatorname{id}}
\def\op{\operatorname{op}}
\def\dom{\operatorname{dom}}
\def\x{\times}
\def\pr{\pi}

\newcommand{\ovln}[1]{\overline{#1}}
\newcommand{\clift}[1]{#1^{\ast}}

\newcommand{\pullbackcorner}[1][ul]{\save*!/#1+1.2pc/#1:(1,-1)@^{|-}\restore}
\newcommand{\pullback}[8]{ \xymatrix{ 
#1 \pullbackcorner \ar[r]^{#5} \ar[d]_{#6} & #2 \ar[d]^{#7} \\
#3 \ar[r]_{#8} & #4 
}}

\newcommand{\coprodcomp}[1]{\mathfrak{Sum}(#1)}
\newcommand{\prodcomp}[1]{\mathfrak{Prod}(#1)}
\newcommand{\dial}[1]{\mathfrak{Dial}(#1)}
\newcommand{\prodmonad}{\mathfrak{Prod}}
\newcommand{\coprodmonad}{\mathfrak{Sum}}
\newcommand{\catfib}{\mathfrak{Cfib}}
\newcommand{\bemph}[1]{\textbf{\emph{#1}}}
\newcommand{\angbr}[2]{\langle #1,#2 \rangle}

\newcommand{\alexandre}[1]{\mathsf{Gr}_{#1}}

\def\infsl{\operatorname{\mathbf{InfSL}}}

\def\pos{\operatorname{\mathbf{Pos}}}

\def\existential{\operatorname{ex}}

\def\cartcat{\operatorname{\mathbf{CartCat}}}

\def\set{\operatorname{\mathbf{Set}}}

\def\coprodsym{\mathfrak{Sum}}

\def\radjunction{\;\hat{\dashv}\;}

\def\ladjunction{\;\check{\dashv}\;}

\def\ev{\textnormal{ev}}

\newcommand{\quadratocomm}[8]{ \xymatrix@+1pc{ 
#1 \ar[r]^{#5} \ar[d]_{#6} & #2 \ar[d]^{#7} \\
#3 \ar[r]_{#8} & #4 
}}

\usepackage[margin=1.6in]{geometry}

\usepackage{hyperref}
\usepackage{enumitem}


\theoremstyle{plain} 
\newtheorem{theorem}{Theorem}[section]
\newtheorem{cor}[theorem]{Corollary}
\newtheorem{lemma}[theorem]{Lemma}
\newtheorem{proposition}[theorem]{Proposition}

\theoremstyle{definition} 
\newtheorem{definition}[theorem]{Definition}
\newtheorem{remark}[theorem]{Remark}
\newtheorem{example}[theorem]{Example}

\title{The G\"odel Fibration}

\author{Davide Trotta \\ \small University of Pisa, IT \\ \small \textit{trottadavide92@gmail.com} \and Matteo Spadetto \\ \small University of Leeds, UK \\ \small \textit{matteo.spadetto.42@gmail.com} \and Valeria de Paiva \\ \small Topos Institute, CA \\ \small \textit{valeria@topos.institute}}
\begin{document}

\maketitle
\begin{abstract}
We introduce the notion of a G\"odel fibration, which is a fibration categorically embodying both the logical principle of traditional Skolemization (we can exchange the order of quantifiers paying the price of a functional)
and the existence of a prenex normal form presentation for every logical formula. Building up from Hofstra's earlier fibrational characterization of the de Paiva's categorical Dialectica construction, we show that a fibration is an instance of the Dialectica construction if and only if it is a G\"odel fibration. This result establishes an internal presentation of the Dialectica construction. Then we provide a deep structural analysis of the Dialectica construction producing a full
description of which categorical structure behaves well with respect to this construction,  focusing on (weak) finite products and coproducts. We conclude describing the applications we envisage for this generalized fibrational version of the Dialectica construction.
\end{abstract}
\section*{Introduction}
Historically, the Dialectica interpretation was devised by G\"odel~\cite{goedel1986} to prove the consistency of arithmetic. The interpretation allowed him to reduce the problem of proving the consistency of first-order arithmetic to the problem of proving the consistency of a simply-typed system of computable functionals, the well-known \emph{System T}. The key feature of the translation is that it turns formulae of arbitrary quantifier complexity into formulae of the form $\exists x \forall y \alpha (x,y)$.

Over the years, several authors have  explained the Dialectica interpretation in categorical terms. In particular, de Paiva~\cite{dePaiva1989Dialectica} introduced the notion of \emph{Dialectica categories} as an internal version of G\"odel's Dialectica Interpretation. The idea is to construct a category $\Dial$ from a category $\mC$ with finite limits.  The main focus in de Paiva's original work is on the categorical structure of the category $\Dial$ obtained, as this notion of a Dialectica category turns out to be also a model of Girard's Linear Logic~\cite{girard1987}.

This construction was first generalized by Hyland, who investigated the Dialectica construction associated to a fibred preorder \cite{HYLAND200243}. Later   Biering in her PhD work \cite{Biering_Dialecticainterpretations}  studied the Dialectica construction for an arbitrary cloven fibration.  

Meanwhile Hofstra~\cite{hofstra2011}  wrote 
an exposition and interpretation of the Dialectica construction from a modern categorical perspective, emphasizing its universal properties. 
His work gives  centre stage to the well-known concepts of monads, simple products and co-products. We take Hofstra's work as the basis for our work here. 
%

Hofstra shows that the original Dialectica construction $\Dialmon$ can be seen as the composition of two free constructions $\Coprod$ and $\Prod$, which are the simple co-product and product completions, respectively.
These completions are fully dual, so we only need to study one and can then deduce results for the other construction. However the whole Dialectica construction is not fully dual, as indicated by the order of the composition of the completions. 
Our work explains when the Dialectica construction can be performed, which hypotheses are necessary for the categorical construction, which properties of the construction  are preserved and why. Most importantly we are able to connect these preservation properties to the logic of the original interpretation, leading up to the definition of what we call the G\"odel fibration.

\section*{Our contributions}
The main contributions of this paper are the following.
\begin{enumerate}[label= \arabic{enumi}.,ref= \arabic{enumi}, wide=0pt]

\item \textit{We formalize the notion of fibrational quantifier-free formula.} Given Hofstra's characterization it is clear that instances of the Dialectica construction should have simple products and co-products, as the construction introduces completions under these. What else is necessary to get us a Dialectica construction? The first novelty of this work is the characterization of `covering quantifier-free objects' of a fibration. These objects correspond to formulas in the logic system that are quantifier-free. Of course, as usually happens in a categorical framework, such a syntactical notion of 'being quantifier-free' needs to be formalized in terms of a universal property.
The logical intuition behind our definition, is that an element $\alpha$ of a fibration $\mp$ is called \emph{quantifier-free} if it satisfies the following universal property, expressed in the internal language of $\mp$: if there is a proof $\pi$ of a statement $\exists i \beta(i)$ assuming $\alpha$, then there exists a \emph{witness} $t$, which depends on the  proof $\pi$, together with a proof of $\beta (t)$. Moreover, this must hold for every re-indexing $\alpha (f)$, because in logic if $\alpha (x)$ is quantifier-free then $\alpha (x)[f/x]=\alpha(f)$ is quantifier-free.
The covering requirement, as usual, means that, for every formula of the form $ i:I \;|\; \alpha (i)$, there exists a formula $\beta (i,a,b)$ quantifier-free  that is provably equivalent to it $\alpha (i) \dashv \vdash \exists a \forall b \beta(i,a,b)$. 

Notice that these requirements reflect G\"odel's original translation  and, at the same time, they recall standard conditions used in category theory to say that a category is \emph{free} for a given structure. 
One could think for example about the condition of having enough projectives in the exact completion of Carboni~\cite{CARBONI199879}. 

\item \textit{We introduce the notion of a G\"odel fibration.} 
A G\"odel fibration is a fibration with simple products and simple co-products, which, most importantly, admits a class of formal sub-objects which are \emph{free from products and co-products} and \emph{cover} all the elements of the fibre. Then we prove that a G\"odel fibration is a fibration categorically embodying both the logical principle of traditional Skolemization and the existence of a prenex normal presentation for every logical formula.

\item \textit{We provide a better definition of a Dialectica category.} We prove that a given fibration  is an instance of the Dialectica construction if and only if it is a G\"odel fibration. This result allows us to give a better definition of a Dialectica category because it shows which properties an arbitrary category should satisfy to be an instance of the Dialectica construction. In other words, given a fibration $\mp$ there exists a fibration $\mp'$ such that $\mp\cong \dial{\mp'}$ if and only if $\mp$ is a G\"odel fibration.
From a categorical perspective, we have classified the free-algebras for the Dialectica monad.

\item \textit{We prove that a Dialectica fibration satisfies a strong constructive feature in terms of witnesses.} 
We have shown that in the internal language of a Dialectica fibration, i.e. in the logic theory that canonically corresponds to this categorical notion, if there is a proof $\pi$ of a statement $\exists i\; \alpha(i)$, then there exists a \emph{witness} $t$, which depends on the  proof $\pi$, together with a proof of $\alpha (t)$. This principle is sometimes called the Rule of Choice. For example, Regular Logic (\url{https://ncatlab.org/nlab/show/regular+logic}) satisfies this principle, see \cite{trottamaietti2020}. 
\item \textit{We investigate which categorical structures are preserved by the Dialectica monad.}
We focus our attention on fibred (weak) finite products, fibred (weak) finite co-products and their \textit{associativities}. Using  Hofstra's decomposition  of
the monad $\Dialmon$ into its two parts, simple coproducts and products $\Coprod(\Prod)$,
we  prove first that the preservation of fibred weak finite products and coproducts together with their associativities depends on the existence of left and right \emph{weak adjoints} to re-indexing functors along its injections in the starting fibration, and \emph{points} in its base category. This base category is assumed to be a distributive category.
These hypotheses are quite natural since, for example, the sub-object fibration on a category with \emph{well-behaved} sums and points, e.g. $\set$, satisfies all of these requirements.   Moreover, these assumptions are also satisfied by the resulting fibration. Under these hypotheses we have shown that the Dialectica construction preserves fibred weak finite products and co-products. Moreover, we show that, under these assumptions, the total category $\dial{\mp}$ has finite weak products and coproducts. 
\end{enumerate}
%

\section*{Related work}


In this work we combine the generalization of the Dialectica construction presented by Hofstra in \cite{hofstra2011}, with the structural analysis of the Dialectica construction due to Hyland \cite{HYLAND200243} and Biering \cite{BIERING2008290}. 
The works of Hyland and Biering extend the results provided by de Paiva in \cite{dePaiva1989Dialectica} to the fibrational setting, in particular, they provide sufficient conditions to make the Dialectica category associated to a cloven fibration into a category with finite products.
Our framework 
shares some of their assumptions, as we are also aiming for a total category with products.  We require the existence of right-weakly left adjoints and left-weakly right adjoints to the re-indexing functors (along the injections in the basis of a given fibration). This is the main ingredient of our notion of an \textit{extendable fibration} (see Definition \ref{definition godel fibration}). These fibrations will provide us with (weak) products and coproducts in the total category.

Another work which deals with structures preserved by Dialectica-like constructions, is the work of Moss and von Glehn \cite{moss-vonglehn2018}, where the authors are interested not in the original construction, but in a modified version of G\"odel's Dialectica interpretation for models of intensional Martin-L\"of type-theory, through the notion of fibred display map category. Their work focus on the preservation of the type constructors, while they drop the layer of predicates from their Dialectica propositions, considering only those Dialectica propositions of the form $\exists x\forall y\top$. In fact, they call their construction the \emph{polynomial model}, explaining that this name fits better, because they are considering the \emph{predicate-free} Dialectica construction.

Finally, Topos-and tripos-theoretic versions of the Dialectica construction have been studied by Biering in \cite{Biering_Dialecticainterpretations}, while the recent work of Shulman \cite{Shulman2020}, describes a "polycategorical" version of a generalization of the Dialectica construction.
Other applications of completions involving  universal and existential quantifiers can be found in  \cite{trottamaietti2020,trotta2020,Frey2014AFS}, where similar constructions are presented in the language of doctrines.

\section{Logical Fibrations}
The basic reference to connecting logic and fibrations in Jacob's book \cite{Jacobs1999}.
We recall the basic facts that we need from the book in the appendix.

But in general we say  that a fibration has \bemph{fibred $\diamond$'s} if all fibre
categories have $\diamond$'s and substitution functors preserve $\diamond$'s.

We review some necessary background on fibrations and completions. 
\begin{definition}
Let $\fibration{\mE}{\mp}{\mB}$ be a functor and $\arrow{X}{f}{Y}$ an arrow in $\mE$. Let us call $\arrow{A}{u:=\mp (f)}{B}$ the arrow $\mp(f)$ of $\mB$. We say that $f$ is \bemph{Cartesian over} $\boldsymbol{u}$ if, for every morphism $\arrow{Z}{g}{Y}$ in $\mE$ such that $\mp (g)$ factors through $u$, $\mp (g)=u w$, there exists a unique $\arrow{Z}{h}{X}$ of $\mE$ such that $g=f h$ and $\mp (h)=w$.
\end{definition}

\begin{definition}
A \bemph{fibration} is a functor $\fibration{\mE}{\mp}{\mB}$ such that, for every $Y$ in $\mE$ and every $\arrow{I}{u}{\mp Y}$, there exists a Cartesian arrow  $\arrow{X}{f}{Y}$ over $u$.
\end{definition}
For a given fibration $\fibration{\mE}{\mp}{\mB}$, and for any $A$ in $\mB$, let $\mE_A$ be the \bemph{fibre category} over $A$: its objects are the objects $X$ of $\mE$ such that $\mp X=A$, and its morphisms, which are said to be \bemph{vertical}, are the morphisms $\arrow{X}{f}{Y}$ of $\mE$ such that $\mp (f)=\id_A$.

Recall from \cite{Jacobs1999} that a fibration is called \bemph{cloven} if it comes together with a choice of Cartesian liftings (a choice of cartesian liftings is called a \emph{cleavage}); and it is called \bemph{split} if it is cloven and the given liftings
are well-behaved in the sense that they satisfy certain functoriality conditions.

In case a fibration $\fibration{\mE}{\mp}{\mB}$ is equipped with a cleavage, for every morphism $\arrow{B}{u}{\mp(Y)}$ of $\mB$ we denote the chosen cartesian lifting of $u$ by $\arrow{u^\ast (Y)}{\ovln{u}Y}{Y}$. Then we can define the \bemph{substitution functor}: $$ \arrow{\mE_B}{u^\ast}{\mE_A}$$ sending $X$ to $u^\ast(X)$ and a vertical morphism $\arrow{X}{f}{Y}$ to the unique mediating map $u^\ast f$ in:
$$ \xymatrix{
u^\ast(X) \ar@{-->}[d]_{u^\ast f} \ar[r]^{\ovln{u}X} & X \ar[d]^f \\
u^\ast(Y) \ar[r]_{\ovln{u}Y} & Y.
}$$


\begin{proposition}[Prop. 9.2.1 \cite{Jacobs1999}]
Let $\fibration{\mE}{\mp}{\mB}$ be a fibration with finite products in its base category $\mB$. The $\mp$ has fibred finite product if and only if the total category $\mE$ has finite products via fibred functors, and the functor $\mp$ strictly preserves these products.
\end{proposition}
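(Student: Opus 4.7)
The plan is to prove both implications by reducing products in the total category to the interplay between products in the base and products in the fibres, using cartesian liftings as the bridge.

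For the forward direction, assume $\mp$ has fibred finite products. I would first produce a terminal object of $\mE$ by taking the terminal object $\top_1$ in the fibre $\mE_1$ over the terminal object $1 \in \mB$; uniqueness of arrows to $\top_1$ from any $X \in \mE$ follows by factoring through the cartesian lift of the unique map $\mp X \to 1$ and then using terminality in the fibre $\mE_{\mp X}$. For binary products of $X \in \mE_A$ and $Y \in \mE_B$, I would form the product $A \times B$ in $\mB$ with projections $\pi_A, \pi_B$, take the reindexings $\pi_A^{\ast}(X)$ and $\pi_B^{\ast}(Y)$ in $\mE_{A \times B}$, and then take their fibred product $P := \pi_A^{\ast}(X) \times_{A\times B} \pi_B^{\ast}(Y)$ inside $\mE_{A\times B}$. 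The projections $P \to X$ and $P \to Y$ are defined by composing the fibre projections with the cartesian lifts $\overline{\pi_A} X$ and $\overline{\pi_B} Y$; by construction $\mp(P) = A \times B$ and $\mp$ sends the two projections exactly to $\pi_A$ and $\pi_B$, giving strict preservation. The projections being cartesian above $\pi_A, \pi_B$ is what makes the product functor a fibred functor.

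For the universal property of $P$ in $\mE$, given $f : Z \to X$ and $g : Z \to Y$ in $\mE$ with $\mp Z = C$, the pair $(\mp f, \mp g)$ induces a unique map $\langle \mp f, \mp g\rangle : C \to A \times B$; lifting $f$ and $g$ through the cartesian arrows $\overline{\pi_A} X$ and $\overline{\pi_B} Y$ and using the factorisation through $\langle \mp f, \mp g\rangle$, we obtain two vertical maps $Z' \to \pi_A^{\ast}(X)$ and $Z' \to \pi_B^{\ast}(Y)$ where $Z' = \langle \mp f, \mp g\rangle^{\ast}(P)$-adjacent data, which pair inside the fibre to a unique vertical arrow into $P$; composing with the cartesian lift recovers the required mediating map. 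Here the main subtlety is keeping track of the distinction between vertical and cartesian components, and checking strictness of $\mp$ on the product diagram; this is where I expect the bookkeeping to be most delicate.

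For the converse, assume $\mE$ has finite products via fibred functors and $\mp$ strictly preserves them. The terminal object of $\mE$ then necessarily lies in $\mE_1$ and, by strict preservation, is terminal in that fibre; moreover for any $A$, the unique map $!_A : A \to 1$ yields a cartesian lift whose codomain is terminal in $\mE_A$. For binary fibred products in $\mE_A$, given $X, Y \in \mE_A$, I would form $X \times Y$ in $\mE$ (which lies over $A \times A$ by strictness) and define the fibred product as the cartesian reindexing $\Delta_A^{\ast}(X \times Y)$ along the diagonal $\Delta_A : A \to A \times A$; the universal property in $\mE_A$ follows from that of $X \times Y$ in $\mE$, together with cartesianness of $\overline{\Delta_A}(X \times Y)$. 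Finally, preservation of products by substitution functors $u^{\ast}$ for $u : A \to B$ reduces to comparing the two natural candidates for $u^{\ast}(X \times_B Y)$ and $u^{\ast} X \times_A u^{\ast} Y$, both of which satisfy the same universal property in $\mE_A$ by a diagram chase using the cartesian lifts of $u$ and the diagonals; the uniqueness part of the universal property yields the required isomorphism.
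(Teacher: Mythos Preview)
The paper does not supply its own proof of this statement: it is quoted verbatim as Prop.~9.2.1 of \cite{Jacobs1999} and used as background, so there is no in-paper argument to compare against. Your proposal is essentially the standard textbook proof (as in Jacobs): in one direction, build products in $\mE$ as $\pi_A^{\ast}X \times_{A\times B} \pi_B^{\ast}Y$ in the fibre over $A\times B$; in the other, recover fibred products as $\Delta_A^{\ast}(X\times Y)$. One small point worth tightening is the ``via fibred functors'' clause: you correctly note that the projections out of $P$ are cartesian, but the claim that the product functor $\mE\times_{\mB}\mE\to\mE$ is fibred also requires that the product of two cartesian arrows is cartesian, which follows from the fact that substitution functors preserve fibred products; you use this implicitly but it is the crux of that clause.
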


One of the pillars of categorical logic is  Lawvere's 
crucial intuition which considers logical languages and theories as indexed categories
and studies their 2-categorical properties. In this setting connectives and quantifiers are characterized in terms of adjointness relations \cite{lawvere1969,lawvere1969b,lawvere1970}.

In this fibrational setting, the intuition is that the base category $\mB$ of a fibration $\fibration{\mE}{\mp}{\mB}$ represents the category of (type-theoretical) contexts, a fibre $\mE_I$ represents the propositions $\alpha(i)$ in the context $I$, and the morphisms are proofs. Cartesian morphisms of $\mp$ induce a \emph{re-indexing} or substitution operation.
From this perspective, the simple form of quantification is described in terms of adjoints to weakening functors $\pr^*$ along projections $\pi$. For example, the existential quantification is given by an operation $ \coprod_{\pr^*}: \mE_{A\x B} \rightarrow \mE_{A}$, which sends a proposition $\alpha (a,b)$ to $\exists b\; \alpha (a,b)$. 

Now we briefly recall the formal definition of a fibration with simple products (or simple
universal quantification) and coproducts (or simple
existential quantification). For  a complete presentation of the theory of fibrations and its connection to type theory, we refer the reader to \cite{Jacobs1999}. 
In this work, we will assume that a fibration $\mp$ is always \emph{cloven and split}, i.e. that the re-indexing operation is functorial. 
\footnote{These definitions can be found in pages 47 and 49 of \cite{Jacobs1999}.}

\begin{definition}
A fibration $\fibration{\mE}{\mp}{\mB}$ with a cleavage over a category $\mB$ with finite products has \bemph{simple coproducts} when all \emph{weakening functors}, i.e. the Cartesian lifitngs of projections, have
left adjoints satisfying \bemph{Beck-Chevalley Condition} (BCC) for pullback squares
of the form:
$$\pullback{I\x X}{I}{J\times X}{J.}{\pr_I}{f\x \id_X}{f}{\pr_J}$$
Dually, a fibration $\fibration{\mE}{\mp}{\mB}$ has  \bemph{simple products}, when weakening functors have right adjoints satisfying BCC.
\end{definition}
The next concept we are going to introduce is that of \bemph{opposite} of a fibration. Recall from \cite[Lemma 1.4.10]{Jacobs1999} that, given a fibration $\fibration{\mE}{\mp}{\mB}$, for every cleavage of $\mp$ one has the isomorphism of sets (or classes): 
$$ \begin{aligned}
\mE(X,Y)&\overset{\cong}{\to} \coprod_{u:\mp X \rightarrow \mp Y} \mE_{\mp X}(X, u^{\ast}(Y)) \\
f&\mapsto (\mp f, f')
\end{aligned}$$
where $\coprod$ is the disjoint union and $f'$ is the unique vertical arrow such that $f=(\overline{ \mp f}(\mp Y))f'$. This means that a morphism in a total category $\mE$ corresponds to a morphism in the basis together with a vertical map. The intuition behind the definition of the opposite is that all vertical maps in such composites are reversed. 

\begin{definition}[Jacobs \cite{Jacobs1999}]
Let $\fibration{\mE}{\mp}{\mB}$ be a fibration.  We describe a new fibration on the same basis written as $\opfibration{\mE}{\mp}{\mB}$, which is fibrewise the opposite of $\mp$.
\end{definition}
Let $CV$ be the class:
$$\{ (f_1,f_2)| f_1\mbox{ is Cartesian, } f_2 \mbox{ is vertical and} \dom (f_1)=\dom (f_2)\}.$$ An equivalence realtion is defined on the collection $CV$ by $(f_1,f_2)\sim (g_1,g_2)$ if there exists an arrow $h$ such that $f_1=g_1h$ and $f_2=g_2h$. The equivalence class of $(f_1,f_2)$ is denoted by $[f_1,f_2]$.
The total category $\mE^{(\op)}$ has the same objects of $\mE$, and morphisms $X\rightarrow Y$ are equivalence classes $[f_1,f_2]$ of arrows $f_1$ and $f_2$, as in
$$\xymatrix{ X\\
\bullet \ar[r]_{f_1} \ar[u]^{f_2}&Y
}$$
The composition $[g_1,g_2]\circ [f_1,f_2]$ is described by the following diagram:
$$\xymatrix{ X\\
A \ar[rr]^{f_1} \ar[u]_{f_2}&&Y\\
(\mp f_1)^*B\ar@{.>}[u]^{x} \ar@{.>}[rr]_>>>>>>>>>>>{\ovln{\mp f_1} B} && B \ar[rr]^{g_1} \ar[u]_{g_2} && Z }$$
where $x$ is the unique vertical arrow arising by cartesianity of $f_1$ and making the diagram commute. We define the composition $[g_1,g_2]\circ [f_1,f_2]$ to be the class: $$[g_1(\ovln{\mp f_1} B),f_2x]$$ which turns out to be well-defined. See \cite[Definition 1.10.11]{Jacobs1999} for more details.

The functor $\opfibration{\mE}{\mp}{\mB}$ is defined by the assignments $X\mapsto \mp X$ and $[f_1,f_2]\mapsto \mp (f_1)$, It is well-defined since $f_2$ is vertical.

\begin{lemma}
Let $\fibration{\mE}{\mp}{\mB}$ be a fibration. Then it is the case that:
\begin{enumerate}
    \item $\opfibration{\mE}{\mp}{\mB}$ is a fibration, and a morphism $[f_1,f_2]$ is a Cartesian arrow if and only if $f_2$ is an isomorphism;
    \item for every object $A$ of the base category $\mB$, there is a natural isomorphism $\mE^{(\op)}_A\cong (\mE_A)^{\op}$, i.e. the fibration $\mp^{\op}$ is the fibrewise opposite of $\mp$.
\end{enumerate}
\end{lemma}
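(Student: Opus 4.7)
The plan is to treat the two claims in order, first exhibiting cartesian liftings explicitly in $\mp^{(\op)}$, then reading off the fibrewise structure from the equivalence classes.

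For part (1), I would first construct a candidate cartesian lifting. Given $Y$ in $\mE^{(\op)}$ (same object as in $\mE$) and $u\colon I\to \mp Y$ in $\mB$, take the chosen cartesian arrow $\overline{u}Y\colon u^\ast(Y)\to Y$ in $\mE$ and form the class $[\overline{u}Y,\id_{u^\ast(Y)}]$; this is a morphism $u^\ast(Y)\to Y$ in $\mE^{(\op)}$ lying over $u$. To verify it is cartesian, I would take an arbitrary $[g_1,g_2]\colon Z\to Y$ in $\mE^{(\op)}$ with $\mp(g_1)=uw$, and use cartesianity of $\overline{u}Y$ in $\mp$ to factor $g_1=(\overline{u}Y)h$ for a unique $h$ over $w$; then $[h,g_2]$ is the desired factorization, and its uniqueness modulo the equivalence relation is forced by the uniqueness of $h$ together with the normal form $f_2=\id$ in the candidate lifting.

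For the characterisation of cartesian morphisms, the forward direction is immediate: $[\overline{u}Y,\id]$ has second component the identity, hence an isomorphism. For the converse, suppose $[f_1,f_2]\colon X\to Y$ is cartesian over $u=\mp(f_1)$. Since cartesian lifts of $u$ at $Y$ are unique up to a unique vertical isomorphism, one compares $[f_1,f_2]$ with the canonical lift $[\overline{u}Y,\id]$ and extracts a vertical iso between their sources in $\mE^{(\op)}$; unpacking this vertical iso, together with the definition of $\sim$ on $CV$, forces $f_2$ to be an isomorphism in $\mE$.

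For part (2), I would analyse morphisms in the fibre $\mE^{(\op)}_A$: a class $[f_1,f_2]$ lies over $\id_A$ iff $\mp(f_1)=\id_A$, so $f_1$ is a vertical cartesian arrow and hence a vertical isomorphism in $\mE$. Using the equivalence $(f_1,f_2)\sim(\id,f_2f_1^{-1})$ (obtained by taking $h=f_1^{-1}$), every such class admits a canonical representative of the form $[\id_X, v]$ with $v\colon X'\to X$ vertical. The assignment $[\id_X,v]\mapsto v$ then gives a bijection on hom-sets with $(\mE_A)^{\op}$, and a direct inspection of the composition rule $[g_1,g_2]\circ[f_1,f_2]=[g_1(\overline{\mp f_1}B),f_2x]$ in the case $\mp f_1=\mp g_1=\id_A$ shows that composition in $\mE^{(\op)}_A$ is precisely composition in $\mE_A$ taken in reverse order, yielding the natural isomorphism $\mE^{(\op)}_A\cong (\mE_A)^{\op}$.

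The main obstacle I expect is the bookkeeping around the equivalence relation on $CV$: one has to be careful that the proposed factorization in the cartesian lifting argument is independent of the chosen representative, and that the normal-form trick $(f_1,f_2)\sim(\id,f_2f_1^{-1})$ really is compatible with the composition formula. Everything else reduces to unwinding definitions and invoking the standard uniqueness of cartesian liftings in $\mp$.
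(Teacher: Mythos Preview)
The paper does not actually prove this lemma: it is stated as a standard fact about the fibrewise opposite, with the construction and details deferred to \cite{Jacobs1999} (Definition~1.10.11 and surrounding material). So there is no ``paper's own proof'' to compare against; your proposal is essentially a reconstruction of the textbook argument, and the overall strategy is the right one.

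Two small points of bookkeeping are worth tightening. First, your labelling of the two directions of the ``cartesian $\Leftrightarrow$ $f_2$ is an isomorphism'' characterisation is muddled: what you call the ``forward direction'' only observes that the particular lift $[\overline{u}Y,\id]$ has $f_2$ an isomorphism, and your ``converse'' paragraph is really the implication ``cartesian $\Rightarrow$ $f_2$ iso''. You still need the genuine converse ``$f_2$ iso $\Rightarrow$ $[f_1,f_2]$ cartesian'', which follows by noting $[f_1,f_2]=[f_1f_2^{-1},\id_X]$ and rerunning your lifting argument with the cartesian arrow $f_1f_2^{-1}$ in place of the chosen $\overline{u}Y$. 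Second, in part~(2) the witness for $(f_1,f_2)\sim(\id,f_2f_1^{-1})$ is $h=f_1$, not $h=f_1^{-1}$: one checks $f_1=\id\cdot f_1$ and $f_2=(f_2f_1^{-1})f_1$. With these corrections the argument goes through.
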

The following useful lemma says that, as in ordinary category theory one has that a category $\mC$ has finite limits if and only if $\mC^{\op}$ has finite colimits, a similar result exists for
fibred categories, since the opposite of a fibration is its \textit{fibrewise} opposite.
\begin{lemma}\label{opfibration}
Let $\fibration{\mE}{\mp}{\mB}$ be a fibration. Then it it the case that:
\begin{enumerate}
     \item there is an isomorphism of fibration $(\mp^{\op})^{\op}\cong \mp$;
    \item $\mp$ has fibered limits if and only if $\mp^{\op}$ has fibered colimits;
    \item $\mp$ has simple products if and only if $\mp^{\op}$ simple coproducts.
\end{enumerate}
\end{lemma}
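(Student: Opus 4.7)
The plan is to reduce all three items to ordinary categorical dualities applied fibrewise, building on the previous lemma's identification $(\mp^{\op})_A \cong (\mE_A)^{\op}$. The key preliminary observation, carrying most of the content, is that the substitution functor of $\mp^{\op}$ along a morphism $u \colon A \to B$ is the opposite of the substitution functor of $\mp$ along $u$, through the fibrewise isomorphism. Indeed, by item~(1) of the previous lemma, a morphism $[f_1, f_2]$ is cartesian in $\mp^{\op}$ iff $f_2$ is an isomorphism, so the canonical cartesian lifting of $u$ over $Y$ in $\mp^{\op}$ is (up to equivalence) the class $[\overline{u}Y, \id]$. A short diagram chase using the composition formula for equivalence classes then shows that the induced functor $(\mp^{\op})_B \to (\mp^{\op})_A$ coincides, through $(\mp^{\op})_A \cong (\mE_A)^{\op}$, with $(u^{\ast})^{\op}$.

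For item~(1), I would invoke the bijection recalled before the definition of the opposite fibration,
\[ \mE(X,Y) \;\cong\; \coprod_{u \colon \mp X \to \mp Y} \mE_{\mp X}(X, u^{\ast}(Y)), \]
applied first to $\mp^{\op}$ and then to $(\mp^{\op})^{\op}$. Each application reverses the vertical factor; two reversals cancel, yielding a natural bijection between the hom-sets of $\mp$ and $(\mp^{\op})^{\op}$ that is the identity on objects and on base morphisms. Checking functoriality amounts to unwinding the defining composition rule $[g_1, g_2] \circ [f_1, f_2] = [g_1(\overline{\mp f_1} B), f_2 x]$ twice and observing that the cartesian component is preserved while the vertical component is reversed and then reversed again.

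For item~(2), I would use the standard characterization that a cloven fibration has fibred $\mathbf{J}$-limits iff every fibre has $\mathbf{J}$-limits and every substitution functor preserves them. By the fibrewise opposite, $(\mp^{\op})_A$ has $\mathbf{J}$-colimits iff $\mE_A$ has $\mathbf{J}$-limits; and by the preliminary observation, substitution in $\mp^{\op}$ preserves $\mathbf{J}$-colimits iff substitution in $\mp$ preserves $\mathbf{J}$-limits. Item~(3) is analogous: simple products in $\mp^{\op}$ are right adjoints to substitution along projections in $\mp^{\op}$, which through the preliminary observation correspond to left adjoints to substitution along the same projections in $\mp$, that is, to simple coproducts of $\mp$. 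The Beck--Chevalley condition transfers because it is the requirement that a canonical mate $2$-cell be invertible, and the mate construction is self-dual.

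The main technical point is the preliminary identification of substitution in $\mp^{\op}$ with the opposite of substitution in $\mp$: this requires a careful check that the composition rule for equivalence classes interacts correctly with the action on vertical morphisms, and that BCC squares in $\mp$ and $\mp^{\op}$ correspond under this identification. Once this is established, items~(1)--(3) follow formally from standard dualities in ordinary category theory.
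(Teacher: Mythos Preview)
The paper does not actually supply a proof for this lemma: it is stated immediately after the previous lemma on $\mp^{\op}$ and before Lemma~\ref{fully faithful}, with no \texttt{proof} environment in between. The result is treated as a standard fact about opposite fibrations, in line with the reference to Jacobs~\cite{Jacobs1999}. Your sketch is a correct and standard way to establish all three items, and there is nothing in the paper's argument to compare it against.

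One minor wording slip: in your treatment of item~(3) you speak of ``simple products in $\mp^{\op}$'' corresponding to ``simple coproducts of $\mp$'', whereas the statement is the other way round (simple products in $\mp$ iff simple coproducts in $\mp^{\op}$). Since the duality you invoke is symmetric this does not affect the validity of the argument, but you should align the phrasing with the claim being proved.
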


\noindent
We conclude the current section by stating and proving the following:

\begin{lemma}\label{fully faithful}
Let $\fibration{\mE}{\mp}{\mB}$ and $\fibration{\mE'}{\mp'}{\mB}$ be fibrations and let $F$ be a morphism of fibrations $\mp \to \mp'$, i.e. $F$ is a functor $\mE \to \mE'$ such that $\mp'F=\mp$ and $F$ preserves cartesian morphisms over a given arrow of $\mB$.

Let us assume that $F$ is an equivalence on the fibres, i.e. $F\upharpoonright_{\mE_I} \colon \mE_I \to \mE_I'$ is an equivalence for every object $I$ of $\mB$. Then $F$ is fully faithful.

\proof
Let $X$ and $Y$ be objects of $E$. Then: $$\begin{aligned}
\mE(X,Y)&\cong \coprod_{u:\mp X \rightarrow \mp Y} \mE_{\mp X}(X, u^{\ast}(Y))\cong \coprod_{u:\mp X \rightarrow \mp Y} \mE'_{\mp X}(F X, F 
u^{\ast}(Y)) \\
&\cong\coprod_{u:\mp'F X \rightarrow \mp'F Y} \mE'_{\mp'F X}(FX, u^{\ast}(FY))\cong \mE'(FX,FY)
\end{aligned}$$ and we are done.
\endproof
\end{lemma}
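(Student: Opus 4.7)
My plan is to compute the hom-set $\mE(X,Y)$ using the canonical decomposition recalled earlier in the excerpt, transport each summand across the equivalence $F\upharpoonright_{\mE_{\mp X}}$, and then reassemble the resulting sum to recognise it as $\mE'(FX,FY)$. The three ingredients to combine are: the natural bijection $\mE(X,Y)\cong \coprod_{u\colon \mp X\to \mp Y}\mE_{\mp X}(X,u^{\ast}(Y))$ (and its primed analogue); the assumed fibrewise equivalence, which ensures $\mE_{\mp X}(X,u^{\ast}(Y))\cong \mE'_{\mp X}(FX,F(u^{\ast}(Y)))$ for each $u$; and the hypothesis that $F$ preserves cartesian morphisms, which yields an isomorphism $F(u^{\ast}(Y))\cong u^{\ast}(FY)$ in the fibre $\mE'_{\mp X}$.

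First I would fix objects $X,Y$ of $\mE$ and note that $\mp'FX=\mp X$ and $\mp'FY=\mp Y$, so the indexing set $\{u\colon \mp X\to \mp Y\}$ is the same for $\mp$ and for $\mp'$. Next I would apply the decomposition lemma for $\mp$, then apply the fibrewise equivalence pointwise on each summand, then use cartesian preservation to replace $F(u^{\ast}Y)$ by $u^{\ast}(FY)$ up to vertical isomorphism, and finally use the decomposition lemma for $\mp'$ in reverse to land in $\mE'(FX,FY)$. Writing the chain out:
\[
\mE(X,Y)\cong \coprod_{u}\mE_{\mp X}(X,u^{\ast}Y)\cong \coprod_{u}\mE'_{\mp X}(FX,F(u^{\ast}Y))\cong \coprod_{u}\mE'_{\mp X}(FX,u^{\ast}(FY))\cong \mE'(FX,FY).
\]
One should then check that this composite bijection is precisely the action of $F$ on morphisms, which is immediate from unwinding the definition of the decomposition $f\mapsto (\mp f,f')$ and the fact that $F$ is functorial and sends $\ovln{u}Y$ to a cartesian lifting of $u$ over $FY$.

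The step I expect to require the most care is the third isomorphism: showing that the vertical iso $F(u^{\ast}Y)\cong u^{\ast}(FY)$ produced by cartesianity is natural in the vertical argument, so that pre-composition with it yields a genuine bijection of hom-sets and not just a pointwise one. This is standard but worth stating cleanly, since everything else in the chain is either hypothesis or the decomposition recalled above. Once that naturality is in place, the compatibility of the whole chain with the action of $F$ makes $F$ both injective and surjective on each $\mE(X,Y)$, which is precisely full faithfulness.
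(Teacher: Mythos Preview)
Your proposal is correct and follows exactly the same route as the paper's proof: decompose $\mE(X,Y)$ as a coproduct over $u\colon \mp X\to\mp Y$ of vertical hom-sets, apply the fibrewise equivalence, use cartesian preservation to identify $F(u^\ast Y)$ with $u^\ast(FY)$, and reassemble via the same decomposition for $\mp'$. If anything, you are more careful than the paper, which writes the chain of isomorphisms without commenting on the naturality of the third step or on why the resulting bijection coincides with the action of $F$.
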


We conclude this section by recalling the 2-categorical structure of fibrations. We define $\catfib$ the 2-category of cloven, split fibrations. The objects of $\catfib $ are cloven, split fibrations $\fibration{\mE}{\mp}{\mB}$. For fibrations $\fibration{\mE}{\mp}{\mB}$ and $\fibration{\mD}{\mq}{\mB'}$, a 1-cell is a pair $(F_0,F_1)$ of functors such that the diagram
\[\xymatrix{
\mD \ar[r]^{F_0} \ar[d]_{\mq} & \mE\ar[d]^{\mp}\\
\mB' \ar[r]_{F_1} & \mB
}\]
commutes,  where $F_0$ is a \emph{fibred functor}, i.e. it sends $\mq$-cartesian arrow to $\mp$-cartesian arrow. A 2-cell from $(F_0,F_1)$ to $(G_0,G_1)$ is a pair $( \phi_0,\phi_1)$ of natural transformations $\arrow{F_0}{\phi_0}{G_0}$, $\arrow{F_1}{\phi_1}{G_1}$ such that $i_{\mp}.\phi_0=\phi_1.i_{\mq}$, i.e. for every $X$ of $\mD$ we have $\mp ({\phi_0})_X=(\phi_1)_{\mq X}$.

\section{The dialectica monad}
In this section we assume that  $\fibration{\mE}{\mp}{\mB}$ is  a cloven and split fibration whose base category $\mB$ has finite products. First we recall from Hofstra's \cite{hofstra2011} the free construction which adds simple coproducts to a fibration, and then the dual construction which freely adds simple products.

\bigskip

\noindent
\textbf{Simple coproducts completion}. The category $\coprodcomp{\mp}$ has:
\begin{itemize}
    \item as \textbf{objects}  triples $(I,X,\alpha)$, where $I$ and $X$ are objects of the base category $\mB$ and $\alpha$ is an object of the fibre $\mE_{I\x X}$;
    \item as \textbf{morphisms}  triples $\arrow{(I,X,\alpha)}{(f_0,f_1,\phi)}{(J,Y,\beta)}$, where $\arrow{I}{f_0}{J}$ and $\arrow{I\x X}{f_1}{Y}$ are arrows of $\mB$ and $\arrow{\alpha}{\phi}{\angbr{f_0\pr_I}{f_1}^{\ast}(\beta)}$ is a morphism of the fibre category $\mE_{I\x X}$.
\end{itemize}

\noindent
The category $\coprodcomp{\mp}$ is fibred over $\mB$ via the first component projection and this fibration is denoted by $\fibration{\coprodcomp{\mp}}{\coprodcomp{\mp}}{\mB}$. This fibration is called  the \bemph{simple coproduct (or sum) completion} of $\mp$. The intuition behind this definition is that an object $(I,X,\alpha)$ of the fibre category $\coprodcomp{\mp}_{I}$ represents a formula $(\exists x:X) \alpha(i,x)$. The assignment $\mp\mapsto \coprodcomp{\mp}$ extends to a KZ pseudo-monad on the 2-category of cloven split fibrations, see \cite[Ttheorem 3.9]{hofstra2011}.

\begin{remark}[\textit{A presentation of $\coprodcomp{\mp}$ reindexing functors}] \label{reindexing}
Let $\fibration{\mE}{\mp}{\mB}$ be a cloven and split fibration. Let $\arrow{I}{f}{J}$ be an arrow of $\mB$ and let $(J,Y,\beta)$ be an object of $\coprodcomp{\mp}_J$. Then the triple: $$(\;\arrow{I}{f}{J},\;\arrow{I\times Y}{\pi_Y}{Y},\;\arrow{\langle f\pr_I,\pr_Y \rangle^*\beta}{1_{\langle f\pr_I,\pr_Y \rangle^*\beta}}{\langle f\pr_I,\pr_Y \rangle^*\beta}\;)$$ is $\coprodcomp{\mp}$-cartesian $(I,Y,\langle f\pr_I,\pr_Y \rangle^*\beta) \to (J,Y,\beta)$ over $\arrow{I}{f}{J}$. In particular $\coprodcomp{\mp}$ is endowed with a cloven and split structure. If: $$\arrow{(J,Y,\beta)}{(\arrow{J\times Y}{g}{Y'},\;\arrow{\beta}{\gamma}{\langle \pr_J,g\rangle^*\beta'})}{(J,Y',\beta')}$$ is an arrow of $\coprodcomp{\mp}_J$ (observe the omission of the first component, as it is forced to be the identity arrow on $J$) then its $f$-reindexing is the pair: $$\arrow{(I,Y,\langle f\pi_I,\pi_Y \rangle^*\beta)}{(g\langle f\pi_I,\pi_Y\rangle,\;\langle f\pi_I,\pi_Y\rangle^*\gamma )}{(I,Y',\langle f\pi_I,\pi_{Y'} \rangle^*\beta')}$$ of $\coprodcomp{\mp}_I$, whose first component is the arrow $\arrow{I\times Y}{g\langle f\pi_I,\pi_Y \rangle}{Y'}$ of $\mB$ and whose second one is the arrow: $$\arrow{\langle f\pi_I,\pi_Y \rangle^*\beta}{\langle f\pi_I,\pi_Y \rangle^*\gamma}{\langle f\pi_I,\pi_Y \rangle^*\langle \pr_J,g\rangle^*\beta'=\langle \pi_I,g\langle f\pi_I,\pi_Y \rangle\rangle^*\langle f\pi_I,\pi_{Y'} \rangle^*\beta'}$$ of $\mE_{I\times Y}$.

Now, let us assume that $f$ is a projection $\arrow{J \times K}{\pr_J}{J}$. In this particular case (in which we are mostly interested) such an annoying presentation \textit{collapses} into the following easier one: the $\pr_J$-weakening of the arrow $(g,\gamma)$ of $\coprodcomp{\mp}_J$ is the pair: $$\arrow{(J\times K,Y,\langle \pi_J,\pi_Y \rangle^*\beta)}{(g\langle \pi_J,\pi_Y\rangle,\langle \pi_J,\pi_Y\rangle^*\gamma)}{(J\times K,Y',\langle \pi_J,\pi_{Y'}\rangle^*\beta')} $$ of $\coprodcomp{p}_I$, whose first component is the arrow $\arrow{J\times K \times Y}{g \langle \pi_J,\pi_Y\rangle}{Y'}$ of $\mB$ and whose second one is the arrow: $$\arrow{\langle \pi_J,\pi_Y\rangle^*\beta}{\langle \pi_J,\pi_Y\rangle^*\gamma}{\langle \pi_J,\pi_Y\rangle^*\langle \pi_J,g \rangle^*\beta'=\langle \langle\pi_J,\pi_K \rangle, g\langle\pi_J,\pi_Y \rangle\rangle^*\langle \pi_J,\pi_{Y'}\rangle^*\beta'}$$  of $\mE_{J\times K\times Y}$.
\end{remark}

\begin{remark}[\textit{$\coprodcomp{\mp}$ has simple coproducts}] \label{coprodcomp has simple coprod} Let $\mp$ be a cloven and split fibration and let us consider a projection $\arrow{J\times K}{\pi_J}{J}$ of $\mB$. The left adjoint $\coprod_{\pi_J}$ of the $\pi_J$-weakening $\pi_J^*$ in $\coprodcomp{p}$ (see Remark \ref{reindexing}) exists and sends an arrow: $$\arrow{(J \times K, Y, \beta)}{(\arrow{J\times K\times Y}{g}{Y'},\;\arrow{\beta}{\gamma}{\langle \langle \pi_J,\pi_K\rangle,g\rangle^*\beta'})}{(J\times K,Y',\beta')}$$ of $\coprodcomp{\mp}_{J\times K}$ to the arrow: $$\arrow{(J, K\times Y, \beta)}{(\arrow{J\times K\times Y}{\langle \pr_K,g \rangle}{K\times Y'},\;\arrow{\beta}{\gamma}{\langle  \pi_J,\langle \pi_K,g\rangle\rangle^*\beta'})}{(J, K\times Y',\beta')}$$ of $\coprodcomp{\mp}_{J}$, which we also denote as: $$\arrow{\coprod_{\pr_J}(J\times K,Y,\beta)}{\coprod_{\pr_J}(g,\gamma)}{\coprod_{\pr_J}(J\times K,Y',\beta')}.$$
\end{remark}

\begin{remark}
Let $\fibration{\mE}{\mp}{\mB}$ be a fibration and consider its simple coproduct completion $\fibration{\coprodcomp{\mp}}{\coprodcomp{\mp}}{\mB}$. As a consequence of Remark \ref{coprodcomp has simple coprod}, every element $(I,A,\alpha)$ of the fibre $\coprodcomp{\mp}_I$ equals the object $\coprod_{\pr_I}(I\x A,1,\alpha)$.
\end{remark}

Notice that, by dualising the previous construction, one gets the notion of simple product completion together with its analogous version of the previous characterization.

\medskip

\noindent
\textbf{Simple products completion}. The category $\prodcomp{\mp}$ is the one:
\begin{itemize}
    \item whose \textbf{objects} are triples $(I,X,\alpha)$, where $I$ and $X$ are objects of the base category $\mB$ and $\alpha$ is an object of the fibre $\mE_{I\x X}$;
    \item whose \textbf{morphisms} are triples $\arrow{(I,X,\alpha)}{(f_0,f_1,\phi)}{(J,Y,\beta)}$, where $\arrow{I}{f_0}{J}$ and $\arrow{I\x Y}{f_1}{X}$ are arrows of $\mB$ and $\arrow{\clift{\angbr{\pr_I}{f_1}}(\alpha)}{\phi}{\clift{\langle f_0\pi_I, \pi_Y\rangle}(\beta)}$ is a morphism of the fibre category $\mE_{I\x X}$.
\end{itemize}

\noindent
Again, the category $\prodcomp{\mp}$ is fibred over $\mB$ via first component projection and this fibration is denoted by $\fibration{\prodcomp{\mp}}{\prodcomp{\mp}}{\mB}$ and called \bemph{simple product completion} of $\mp$. The intuition behind this definition is that an object $(I,X,\alpha)$ of the fibre category $\prodcomp{\mp}_{I}$ represents a formula $(\forall x:X) \alpha(i,x)$.

\begin{proposition}\label{proposition coproc is dual to prod}
There is an isomorphism of fibrations: $$\prodcomp{\mp}\cong \coprodcomp{\mp^{\op}}^{\op}$$ which is natural in $\mp$.
\end{proposition}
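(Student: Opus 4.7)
The plan is to exhibit an explicit isomorphism of fibrations $\Phi\colon \prodcomp{\mp} \to \coprodcomp{\mp^{\op}}^{\op}$ over $\mB$ which is the identity on objects and essentially the identity on morphism data, once both sides are unpacked. Since the two categories differ only by purely formal dualisations in the fibrewise direction, the proposition is ultimately bookkeeping: after unravelling the definitions the same combinatorial data will present both sides, and naturality in $\mp$ will follow from the observation that both constructions act on a fibred functor by the same rule.

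First I would unpack the right-hand side. The objects of $\coprodcomp{\mp^{\op}}^{\op}$ are the triples $(I,X,\alpha)$ defining $\coprodcomp{\mp^{\op}}$, and these agree with the objects of $\prodcomp{\mp}$ since the object-action of reindexing in $\mp^{\op}$ coincides with that in $\mp$. For morphisms, I would use the equivalence-class description $[f_1,f_2]$ recalled in the excerpt to observe that a morphism $(I,X,\alpha)\to (J,Y,\beta)$ in $\coprodcomp{\mp^{\op}}^{\op}$ over $\arrow{I}{f_0}{J}$ corresponds to a vertical morphism $f_0^*(J,Y,\beta)\to (I,X,\alpha)$ in $\coprodcomp{\mp^{\op}}_I$. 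By Remark \ref{reindexing} applied to $\mp^{\op}$, this reindexing equals $(I,Y,\langle f_0\pi_I,\pi_Y\rangle^*\beta)$, and a vertical morphism from it to $(I,X,\alpha)$ in $\coprodcomp{\mp^{\op}}_I$ is exactly a pair $(g,\gamma)$ with $\arrow{I\times Y}{g}{X}$ in $\mB$ and $\gamma\colon \langle f_0\pi_I,\pi_Y\rangle^*\beta \to \langle \pi_I,g\rangle^*\alpha$ in $(\mE_{I\times Y})^{\op}$, i.e., a vertical arrow $\langle \pi_I,g\rangle^*\alpha \to \langle f_0\pi_I,\pi_Y\rangle^*\beta$ in $\mE_{I\times Y}$. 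This is precisely the data of a morphism in $\prodcomp{\mp}$ over $f_0$, and $\Phi$ is then defined by sending each morphism of $\prodcomp{\mp}$ to the morphism of $\coprodcomp{\mp^{\op}}^{\op}$ that it presents under this correspondence.

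It remains to verify that $\Phi$ is a functor, preserves cartesian morphisms, and is natural in $\mp$. Functoriality reduces to matching the composition rules on the two sides: composition in $\prodcomp{\mp}$ is inherited from the base category and the underlying fibre compositions twisted by the appropriate reindexings, while composition in $\coprodcomp{\mp^{\op}}^{\op}$ is computed via the cartesianity-factorisation recipe recalled in the excerpt. A direct unwinding, using that cartesian lifting of a projection in $\coprodcomp{\mp^{\op}}$ is given by the standard $\langle f\pi_I,\pi_Y\rangle$-reindexing, shows that the two composites coincide, and identities match trivially. Preservation of cartesian morphisms is immediate, since on both sides a cartesian lifting of $f_0$ is given by the same projection-style data with identity vertical component. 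Naturality in $\mp$ follows from the observation that a fibred functor $F\colon \mp \to \mp'$ is sent by both $\prodcomp{(-)}$ and $\coprodcomp{((-)^{\op})}^{\op}$ to the same rule, namely apply $F$ componentwise on the third-coordinate data and on vertical morphisms, so $\Phi$ strictly intertwines the two actions. The main obstacle is genuinely only bookkeeping: one must carefully track the direction of verticality when entering and exiting the opposite fibration and distinguish the action of reindexing in $\mp$ from that in $\mp^{\op}$ on fibrewise morphisms; once this is done the isomorphism is tautological.
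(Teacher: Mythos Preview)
Your approach is correct and is essentially the only sensible route: unpack the objects and morphisms on each side, see that the same combinatorial data (a base arrow $f_0$, a contravariant arrow $g$ on the second component, and a vertical arrow with the reversed direction) present both fibrations, and then check functoriality, cartesianness, and naturality as bookkeeping. Your use of the $[f_1,f_2]$ description of the opposite fibration together with the explicit description of reindexing from Remark~\ref{reindexing} is exactly right, and your identification of a vertical morphism in $\coprodcomp{\mp^{\op}}_I$ with a pair $(g,\gamma)$ where $\gamma$ lives in $(\mE_{I\times Y})^{\op}$ is the key observation.

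Note, however, that the paper does \emph{not} actually give a proof of this proposition: it is stated without argument and then used freely (e.g.\ in Theorem~\ref{proposition free-algebras prod} and in the dualisations of Section~5). So there is nothing to compare your argument against; you are supplying the routine verification that the paper elides. One small caution: when you verify functoriality, be careful that composition in $\coprodcomp{\mp^{\op}}^{\op}$ involves both the cartesian factorisation of the opposite fibration \emph{and} the composition rule in $\coprodcomp{\mp^{\op}}$ (which itself contains a reindexing step); since everything is split this is genuinely just symbol-matching, but it is where a careless reader is most likely to drop a $(-)^{\op}$.
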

Again, the assignment $\mp\mapsto \coprodcomp{\mp}$ extends to a co-KZ pseudo-monad on the 2-category of cloven split fibrations, and its 2-category of pseudo-algebras is equivalent to the 2-category of fibrations with simple products, see \cite[Theorem 3.12]{hofstra2011}.

We conclude this section by recalling the presentation of the dialectica construction and its presentation via the product-coproduct completions.

\medskip

\noindent
\textbf{Dialectica construction}. Let $\fibration{\mE}{\mp}{\mB}$ be a fibration. Define a category $\dial{\mp}$ as follows:
\begin{itemize}
\item \bemph{objects} are quadruples $(I, X,U,\alpha)$ where $I,X$ and $U$ are objects of the base category $\mB$ and $\alpha\in \mE_{I\times X\times U}$ is an objects of the fibre of $\mp$ over $I\times X\times U$;
\item a \bemph{morphism} from $(I, X,U,\alpha)$  to $(J, Y,V,\beta)$ is a quadruple $(f,f_0,f_1,\phi)$ where
\begin{enumerate}
    \item $\arrow{I}{f}{J}$ is a morphism in $\mB$;
    \item $\arrow{I\times X}{f_0}{Y}$ is a morphism in $\mB$;
    \item $\arrow{I\times X\times V}{f_1}{U}$ is a morphism in $\mB$;
    \item $\arrow{\alpha(i,x,f_1(i,x,v))}{\phi}{\beta (f(i),f_0(i,x),v)}$ is an arrow in the fibre over $I\times X\times V $.
\end{enumerate}
\end{itemize}
This is a fibration on $\mB$ with the projection on the first component. The key observation of Hofstra is that the construction of the fibration $\dial{\mp}$ can be decomposed in two steps.

\begin{lemma}[Hofstra \cite{hofstra2011}]
There is an isomorphism of fibrations, natural in $\mp$:
$$\dial{\mp} \cong \coprodcomp{\prodcomp{\mp}}.$$ 
\end{lemma}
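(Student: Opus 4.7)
The plan is to construct an isomorphism of fibrations $F \colon \dial{\mp} \to \coprodcomp{\prodcomp{\mp}}$ that is the identity on the base $\mB$, by unpacking the right-hand side in two nested layers and matching its data against the Dialectica data. On objects, an element of $\coprodcomp{\prodcomp{\mp}}$ over $I$ is a triple $(I, X, \gamma)$ with $\gamma \in \prodcomp{\mp}_{I \times X}$; writing $\gamma = (I \times X, U, \alpha)$ with $\alpha \in \mE_{I \times X \times U}$ recovers exactly a Dialectica object $(I, X, U, \alpha)$, so $F$ is the identity on objects modulo this repackaging.

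On morphisms I would proceed as follows. By the definition of $\coprodsym$, a morphism $(I, X, (I \times X, U, \alpha)) \to (J, Y, (J \times Y, V, \beta))$ in $\coprodcomp{\prodcomp{\mp}}$ consists of arrows $f \colon I \to J$ and $f_0 \colon I \times X \to Y$ in $\mB$ together with a vertical morphism $\phi$ in $\prodcomp{\mp}_{I \times X}$ from $(I \times X, U, \alpha)$ to the reindexing $\angbr{f \pi_I}{f_0}^*(J \times Y, V, \beta)$. By the dual of Remark \ref{reindexing} (justified by Proposition \ref{proposition coproc is dual to prod}), this reindexing equals $(I \times X, V, \tilde h^* \beta)$, where $\tilde h \colon I \times X \times V \to J \times Y \times V$ acts as $\tilde h(i,x,v) = (f(i), f_0(i,x), v)$. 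Unpacking the vertical morphisms in $\prodsym$, the arrow $\phi$ then amounts to a pair $(f_1, \psi)$ with $f_1 \colon I \times X \times V \to U$ and $\psi \colon \angbr{\pi_{I \times X}}{f_1}^* \alpha \to \tilde h^* \beta$ in $\mE_{I \times X \times V}$. Reading off the substitutions, $\angbr{\pi_{I \times X}}{f_1}^* \alpha$ is $\alpha(i,x,f_1(i,x,v))$ and $\tilde h^*\beta$ is $\beta(f(i),f_0(i,x),v)$, so the datum is precisely a Dialectica morphism $(f, f_0, f_1, \psi)$; hence $F$ is bijective on hom-sets.

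I would then verify functoriality and fibredness. For composition, one needs to check that chaining two arrows in $\coprodcomp{\prodcomp{\mp}}$---which entails first weakening the second $\prodsym$-vertical component along a projection at the $\coprodsym$ level, then composing in $\prodsym_{I \times X}$, which itself requires reindexing along $\angbr{\pi_{I \times X}}{f_1}$---produces the same substitution formula as the Dialectica composite. Expanding both expressions in $\mB$ via Remark \ref{reindexing} and its dual, this reduces to a routine variable-matching verification. Preservation of identities is immediate, and since the first-component projection agrees strictly with the Dialectica projection, $F$ is fibred over $\mB$ on the nose; composing the cartesian liftings of Remark \ref{reindexing} and its $\prodsym$-dual yields exactly the Dialectica cartesian arrows, those whose last three components are projections and identities.

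Naturality in $\mp$ is automatic because $F$ is defined purely via universal reindexings, which are preserved by any fibred morphism $\mp \to \mp'$. The main obstacle is bookkeeping rather than conceptual: one must keep careful track of which reindexing belongs to which of the two nested completions and confirm that the resulting substitutions in $\mB$ match the Dialectica composition formula on the nose. Given the explicit description in Remark \ref{reindexing}, no further ideas are needed, but the unwinding is delicate and must be carried out meticulously.
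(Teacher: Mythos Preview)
The paper does not actually prove this lemma: it is stated with attribution to Hofstra \cite{hofstra2011} and no argument is given. Your unpacking is the standard and essentially only way to establish the isomorphism, and your identification of objects and morphisms is correct; in particular, the reindexing in $\prodcomp{\mp}$ along $\angbr{f\pi_I}{f_0}$ does land in $(I\times X,V,\tilde h^*\beta)$, and a vertical $\prodsym$-arrow out of $(I\times X,U,\alpha)$ into it is precisely a pair $(f_1,\psi)$ as you describe, matching the Dialectica datum on the nose.
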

Notice that the pseudo-functor $\coprodcomp{\prodcomp{-}}$ is not a pseudo-monad in general, but,
in the case the base category $\mB$ of a fibration $\fibration{\mE}{\mp}{\mB}$ is cartesian closed, one can shows that there exists a pseudo-distributive law $$\arrow{\prodmonad\coprodmonad}{\lambda}{\coprodmonad\prodmonad}$$ 
of pseudo-monads, see \cite[Theorem 4.4]{hofstra2011}. Therefore, by the known equivalence between lifting and pseudo-distributive laws, see for example \cite{tanaka2004,tanaka2006}, in this case we have that $\coprodcomp{\prodcomp{-}}$ is a pseudo-monad.

An notably advantage of this algebraic presentation of the dialectica construction, is that the principle of Skolemisation is represented by the pseudo-distributive law $\lambda$. 
\begin{theorem}
When the base category $\mB$ of a fibration $\mp$ is cartesian closed, the fibration $\dial{\mp}$ stisfy the principle 
\[ \forall x \exists y \alpha(i,x,y)\cong \exists f \forall x \alpha (i,x,fx)\]
for every $\alpha$.
\end{theorem}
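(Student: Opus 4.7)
The plan is to unpack both sides of the claimed isomorphism using Hofstra's decomposition $\dial{\mp}\cong\coprodcomp{\prodcomp{\mp}}$, compute them explicitly, and observe that they coincide up to the canonical currying isomorphism $(Y\x U)^X\cong Y^X\x U^X$ available in the cartesian closed base $\mB$. Conceptually this is nothing but the component at $\alpha$ of the pseudo-distributive law $\lambda\colon\prodmonad\coprodmonad\Rightarrow\coprodmonad\prodmonad$ of \cite[Theorem 4.4]{hofstra2011}; our job is to verify that this distributive law really implements the logical principle $\forall x\exists y\,\alpha(i,x,y)\cong\exists f\forall x\,\alpha(i,x,fx)$.

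First I would present an arbitrary $\alpha\in\dial{\mp}_{I\x X\x Y}$ as a datum $(I\x X\x Y,\, U,\, V,\, \tilde\alpha)$ with $\tilde\alpha\in\mE_{I\x X\x Y\x U\x V}$, so that $\alpha$ encodes $\exists u\forall v\,\tilde\alpha(i,x,y,u,v)$. For the left-hand side $\forall x\exists y\,\alpha$, I would apply Remark \ref{coprodcomp has simple coprod} to the outer $\coprodmonad$: the simple coproduct $\exists y=\coprod_{\pr_{I\x X}}$ produces $(I\x X,\, Y\x U,\, V,\, \tilde\alpha)$. Then the simple product $\forall x=\prod_{\pr_I}$ in $\dial{\mp}$, which exists thanks to the distributive law $\lambda$, yields $(I,\, (Y\x U)^X,\, V\x X,\, \gamma_{\mathrm{L}})$ with
\[\gamma_{\mathrm{L}}(i,h,v,x)=\tilde\alpha\bigl(i,x,\pi_Y(h(x)),\pi_U(h(x)),v\bigr)\]
modulo re-indexing along the evaluation map $(h,x)\mapsto h(x)$.

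For the right-hand side $\exists f\forall x\,\alpha(i,x,fx)$ I would first re-index $\alpha$ along the substitution map $(i,f,x)\mapsto(i,x,f(x))\colon I\x Y^X\x X\to I\x X\x Y$, obtaining $(I\x Y^X\x X,\,U,\,V,\,\tilde\alpha')$ with $\tilde\alpha'(i,f,x,u,v)=\tilde\alpha(i,x,f(x),u,v)$; then apply $\forall x$ to get $(I\x Y^X,\,U^X,\,V\x X,\,\gamma')$ with $\gamma'(i,f,g,v,x)=\tilde\alpha(i,x,f(x),g(x),v)$; and finally apply $\exists f$ (again via Remark \ref{coprodcomp has simple coprod}) to obtain $(I,\,Y^X\x U^X,\,V\x X,\,\gamma_{\mathrm{R}})$ with $\gamma_{\mathrm{R}}(i,(f,g),v,x)=\tilde\alpha(i,x,f(x),g(x),v)$.

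The two outputs then agree up to the canonical cartesian-closed isomorphism $(Y\x U)^X\cong Y^X\x U^X$: under currying the witness component matches, the counter-witness component is $V\x X$ on both sides, and the fibre predicates $\gamma_{\mathrm{L}}$ and $\gamma_{\mathrm{R}}$ become literally equal, giving the desired vertical isomorphism in $\dial{\mp}_I$. The main obstacle is the purely bureaucratic bookkeeping of the re-indexings involved: tracking which projections are weakenings, checking that the chosen cleavage is respected at each step, and verifying that the comparison map built from currying is cartesian over the identity. Once these pullback manipulations are sorted out, Skolemization in $\dial{\mp}$ is essentially a rephrasing of the universal property of the exponential in $\mB$.
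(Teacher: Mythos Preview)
Your proposal is correct. The paper does not supply its own proof of this theorem: it is stated immediately after the remark that ``the principle of Skolemisation is represented by the pseudo-distributive law $\lambda$'' and is left as a direct consequence of Hofstra's \cite[Theorem~4.4]{hofstra2011}. What you have written is precisely an unpacking of that distributive law component-wise: you compute the simple product in $\coprodcomp{\prodcomp{\mp}}$ via the formula of Theorem~\ref{existential preserves universal} and show that the two sides differ only by the cartesian-closed isomorphism $(Y\times U)^X\cong Y^X\times U^X$. So your argument and the paper's (implicit) one coincide; the paper simply outsources the computation to Hofstra, while you carry it out explicitly. One small remark: you attribute the existence of $\prod_{\pr_I}$ in $\dial{\mp}$ to the distributive law $\lambda$, but strictly speaking it comes from the fact that $\coprodcomp{-}$ preserves simple products over a cartesian closed base (Theorem~\ref{existential preserves universal}); the distributive law is the packaging of exactly the isomorphism you then verify by hand.
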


\section{The notion of Skolem and G\"odel fibrations}
When one deals with quantification, for example in first-order logic, it is very common to assert something like \emph{a formula $\alpha$ is quantifier-free}. This assertion has a natural meaning from a syntactic point of view, but it is not clear how it should be presented from a categorical perspective.
The aim of the following definitions, which are generalizations of definitions in \cite{trottamaietti2020} to the fibrational setting, is to capture the common property of those elements of a given fibration $\fibration{\mE}{\mp}{\mB}$ which will appear as quantifier-free propositions in the internal language of the fibration $\mp$.
We start by defining when an element of a fibre of $\mp$ is free from the existential quantifier, and then we dualize the definition for the universal quantifier. (Recall that the symbols $\coprod$ and $\prod$ represent the logical quantifiers $\exists$ and $\forall$.)

The logical intuition behind the next definition is that an element $\alpha$ is \emph{quantifier-free} if it satisfies the following universal property: if there is a proof $\pi$ of a statement $\exists i\; \beta(i)$ assuming $\alpha$, then there exists a \emph{witness} $t$, which depends on the  proof $\pi$, together with a proof of $\beta (t)$. Moreover, we require that this holds for every re-indexing $\alpha (f)$ because in logic  quantifier-free propositions are stable under substitution, i.e. if $\alpha (x)$ is quantifier-free then $\alpha(f)$ is quantifier-free.

\begin{definition}\label{definition P-atomic}
Let $\fibration{\mE}{\mp}{\mB}$ be a fibration with simple coproducts. 
An object $\alpha$ of the fibre $\mE_I$ is said to be $\coprod$\bemph{-quantifier-free} if it enjoys the following universal property. For every arrow $f$ and every projection $\pi_A$ in $\mB$ as follows: $$\xymatrix{ A\times B \ar[r]^>>>>>{\pr_A} &A \ar[r]^f &I}$$ and every vertical arrow: $$ \arrow{\clift{f}\alpha}{h}{ \coprod_{\pr_A}\beta} $$
of $\mE_A$, where $\beta$ is an object of the fibre $\mE_{A\times B}$, there exist a unique arrow $\arrow{A}{g}{B}$ of $\mB$ and a unique vertical arrow $\arrow{\clift{f}\alpha}{\ovln{h}}{\clift{\angbr{1_A}{g}}\beta}$ of $\mE_A$ such that $h$ equals the composition: $$\Big(\;\;\arrow{\clift{f}\alpha}{\ovln{h}}{\clift{\angbr{1_A}{g}}\beta}\xrightarrow{\clift{\angbr{1_A}{g}}\eta_{\beta}}\clift{\angbr{1_A}{g}}\big(\pi_A^*\coprod_{\pi_A}\beta\big)=\coprod_{\pi_A}\beta\;\;\Big)$$ where $\arrow{\beta}{\eta_\beta}{\pi_A^*\coprod_{\pi_A}\beta}$ is the unit at $\beta$ of the adjunction $\coprod_{\pr_A}\dashv \clift{\pr_A}$. 
\end{definition}

\medskip 

Clarifying the concrete meaning of Definition \ref{definition P-atomic}, the given object $\alpha$ of $\mE_I$ represents a formula $\alpha(i)$. Given an arrow $\arrow{A}{f}{I}$ a term $fa : I$ is in the context $a:A$, and it is the case that $f^*\alpha$ represents the corresponding formula $\alpha(fa)$. The object $\beta$ of $\mE_{A \times B}$ corresponds to a formula $\beta(a,b)$, the object $\coprod_{\pi_A}\beta$ represents the formula $(\exists b)\beta (a,b)$, which is  in the same context $a:A$ of $\alpha (fa)$. Meanwhile, the object $\langle 1_A,g\rangle^*\beta$ is again the re-indexing of $\beta(a,b)$ through an arrow $\arrow{A}{\langle 1_A,g\rangle}{A\times B}$, hence it represents the formula $\beta(a,ga)$, which is in the same context $a:A$ of $\alpha(fa)$ and $(\exists b)\beta (a,b)$.

Thus the property we require of the formula $\alpha(i)$ is the following: whenever there is a proof (an arrow $h$) of $(\exists b)\beta (a,b)$ from $\alpha(fa)$ (for some term $fa:I$ in the context $a:A$), then there is a unique term $ga:B$ in the context $a:A$ together with a unique proof $\overline{h}$ of $\beta(a,ga)$ from $\alpha(fa)$, in such a way that, adding at the end of the proof $\overline{h}$ the canonical proof of $(\exists b)\beta (a,b)$ from $\beta(a,ga)$ (which is represented by the re-indexing of the unit at $\beta$ of the adjunction $\coprod_{\pi_A}\dashv \pi_A^*$), we get back to the proof $h$ itself of $(\exists b)\beta (a,b)$ from $\alpha(fa)$. 

Observe that this is precisely the universal property, that we presented before Definition \ref{definition P-atomic}, enjoyed by a formula which is free from existential quantification. 
\begin{remark}\label{remark coproduct quant free form a subfibrations}
Notice that if we consider a fibration $\mp$ with simple coproducts, then one can define a sub-fibration $\mp'\rightarrow \mp$ such that the fibres of $\mp'$ are given by $\coprod$-quantifier-free objects, and the base category of $\mp'$ is the same of $\mp$. This follows since $\coprod$-quantifier-free objects are stable under re-indexing by definition.
\end{remark}

The next concept we are going to need in the categorical setting reminds that of \emph{existence of a prenex normal form} used in logic. Recall, for example from \cite{vandalen_logic}, that in (classical) FOL every formula is equivalent to some formula in prenex normal form.

\begin{definition}\label{definition enought-coprod-free-quantifaier}
We say that a fibration with simple coproducts $\fibration{\mE}{\mp}{\mB}$ has \bemph{enough} $\coprod$\bemph{-quantifier-free objects} if, for every object $I$ of $\mB$ and for every element $\alpha\in \mE_I$, there exist an object $A$ and a $\coprod$-quantifier-free object $\beta$ in $\mE_{I\times A}$ such that
$\alpha\cong\coprod_{\pr_I}\beta.$
\end{definition}

By duality we can define the same concept with respect to the universal quantifier.

\begin{definition}\label{definition P-atomic-product}
Let $\fibration{\mE}{\mp}{\mB}$ be a fibration with simple products. An object $\alpha$ of the fibre $\mE_I$ is said to be $\prod$\bemph{-quantifier-free} if it enjoys the following universal property: for every arrow $f$ and every projection $\pi_A$ in $\mB$ as follows: $$\xymatrix{ A\times B \ar[r]^>>>>>{\pr_A} &A \ar[r]^f &I}$$ and every vertical arrow: $$ \arrow{\prod_{\pr_A}\beta}{h}{ \clift{f}\alpha} $$
of $\mE_A$, where $\beta$ is an object of the fibre $\mE_{A\times B}$, there exist a unique arrow $\arrow{A}{g}{B}$ of $\mB$ and a unique vertical arrow $\arrow{\clift{\angbr{1_A}{g}}\beta}{\ovln{h}}{\clift{f}\alpha}$ of $\mE_A$ such that $h$ equals the composition: $$\Big(\;\;\prod_{\pi_A}\beta =\langle 1_A,g\rangle^*\big(\pi_A^*\prod_{\pi_A}\beta\big) \xrightarrow{\langle 1_A,g\rangle^* \varepsilon_\beta}\arrow{\langle 1_A,g \rangle^*\beta}{\ovln{h}}{f^*\alpha}   \;\;\Big)$$ where $\arrow{\pi_A^*\prod_{\pi_A}\beta}{\varepsilon_\beta}{\beta}$ is the counit at $\beta$ of the adjunction $\clift{\pr_A}\dashv \prod_{\pr_A}$. 
\end{definition}

\begin{definition}\label{definition enought-prod-free-quantifaier}
We say that a fibration with simple products $\fibration{\mE}{\mp}{\mB}$ has \bemph{enough}-$\prod$-\bemph{quantifier-free objects} if, for every object $I$ of $\mB$ and for every element $\alpha\in \mE_I$, there exist an object $A$ and a $\prod$-quantifier-free object $\beta$ in $\mE_{I\times A}$ such that
$\alpha\cong\prod_{\pr_I}(\beta).$
\end{definition}

Now we can introduce a particular kind of fibration called a \emph{Skolem fibration}. The name is chosen because these fibrations satisfy a version of the traditional principle of \emph{Skolemization}, as presented in \cite{goedel1986} and \cite{hofstra2011}.

\begin{definition}\label{definition skolem fibration}
A fibration $\fibration{\mE}{\mp}{\mB}$ is called a \bemph{Skolem fibration} if:
\begin{itemize}
    \item its base category $\mB$ is cartesian closed;
    \item the fibration $\mp$ has simple products and simple coproducts;
    \item the fibration $\mp$ has enough $\coprod$-quantifier-free objects.
    \item $\coprod$-quantifier-free objects are stable under simple products, i.e. if $\alpha\in \mE_I$ is a $\coprod$-quantifier-free object, then $\prod_{\pr}(\alpha)$ is a $\coprod$-quantifier-free object for every projection $\pr$ from $I$.
\end{itemize}
\end{definition}

Notice that the last point of Definition \ref{definition skolem fibration} implies that, given a Skolem fibration $\fibration{\mE}{\mp}{\mB}$, the sub-fibration $\fibration{\mE'}{\mp'}{\mB}$ of $\coprod$-quantifier-free objects of $\mp$ defined in Remark \ref{remark coproduct quant free form a subfibrations} has simple products.
\begin{proposition}(Skolemization)
Every Skolem fibration $\mp$ validates the principle:
\[ \forall x \exists y \alpha(i,x,y)\cong \exists f \forall x \alpha (i,x,fx).\]
\end{proposition}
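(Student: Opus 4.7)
Plan: The goal is to establish a natural isomorphism $\prod_X \coprod_Y \alpha \cong \coprod_{Y^X} \prod_X \mathrm{ev}^*\alpha$ in $\mE_I$, for $\alpha \in \mE_{I \times X \times Y}$, where $\mathrm{ev} \colon I \times Y^X \times X \to I \times X \times Y$, $(i,f,x) \mapsto (i,x,fx)$, is the evaluation arrow provided by the cartesian closed structure on $\mB$. My approach is by Yoneda: for every $\beta \in \mE_I$, I would exhibit a natural bijection between $\mE_I(\beta, \prod_X\coprod_Y\alpha)$ and $\mE_I(\beta, \coprod_{Y^X}\prod_X\mathrm{ev}^*\alpha)$. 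Using the ``enough $\coprod$-quantifier-free'' hypothesis on $\beta$, I can write $\beta \cong \coprod_A \gamma$ with $\gamma \in \mE_{I\times A}$ being $\coprod$-quantifier-free, and the adjunction $\coprod_A \dashv \pi_A^*$ then replaces both hom-sets with hom-sets out of $\gamma$.

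For the left-hand side, I would use Beck--Chevalley to push $\pi_A^*$ past $\prod_X \coprod_Y$, and then apply $\pi_X^* \dashv \prod_X$, obtaining
\[\mE_I\bigl(\beta, \prod_X\coprod_Y\alpha\bigr)\;\cong\;\mE_{I \times A \times X}\bigl(\pi_X^*\gamma,\coprod_Y\pi_A^*\alpha\bigr).\]
Since $\pi_X^*\gamma$ is $\coprod$-quantifier-free by stability under reindexing (Remark \ref{remark coproduct quant free form a subfibrations}), the universal property of Definition \ref{definition P-atomic} rewrites this as a disjoint union $\coprod_{h\colon I\times A\times X\to Y}\mE_{I\times A\times X}(\pi_X^*\gamma,\phi_h^*\alpha)$, where $\phi_h \colon I \times A \times X \to I\times X\times Y$ is $(i,a,x) \mapsto (i,x,h(i,a,x))$. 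Dually, for the right-hand side, pushing $\pi_A^*$ past $\coprod_{Y^X}\prod_X$ via Beck--Chevalley and applying the universal property of $\gamma$ to the outer $\coprod_{Y^X}$, followed by Beck--Chevalley for $\prod$ and the adjunction $\pi_X^*\dashv \prod_X$, I would arrive at $\coprod_{k\colon I\times A\to Y^X}\mE_{I\times A\times X}(\pi_X^*\gamma,\psi_k^*\alpha)$, where $\psi_k(i,a,x) = (i,x,k(i,a)(x))$.

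Cartesian closure of $\mB$ supplies the canonical bijection $h \leftrightarrow k$ with $h(i,a,x) = k(i,a)(x)$, and under this correspondence $\phi_h = \psi_k$, so the two disjoint unions agree summand by summand. Yoneda then delivers the desired isomorphism at the level of objects. The main obstacle I foresee is the careful bookkeeping of composite reindexings and Beck--Chevalley instances: one must verify that, after unwinding the cartesian liftings through the $\pi_A^*$/$\pi_X^*$/$\mathrm{ev}^*$ composites, the arrows labelling the two summations reduce precisely to the simple substitutions $\phi_h$ and $\psi_k$. A secondary concern is the naturality of the bijection in $\beta$, but this should follow because each step in the construction invokes only adjunctions, Beck--Chevalley isomorphisms, and the universal property of Definition \ref{definition P-atomic}, all of which are natural in their inputs.
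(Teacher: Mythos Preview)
Your proposal is correct and follows essentially the same route as the paper's proof: both arguments test the two objects against $\coprod$-quantifier-free $\gamma$, invoke the universal property of Definition~\ref{definition P-atomic} to extract a witness map into $Y$, curry it via cartesian closure of $\mB$, and use BCC for $\prod$ along the evident pullback to reassemble on the other side. The only organisational difference is that the paper works directly with a $\coprod$-quantifier-free $\gamma$ in the base fibre and defers the ``enough $\coprod$-quantifier-free objects'' reduction to a closing remark, whereas you front-load it by writing $\beta\cong\coprod_A\gamma$ and carrying the extra index $A$ throughout; this makes your Yoneda bookkeeping slightly heavier but is otherwise equivalent.
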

\begin{proof}
Let us consider an element $\alpha \in E_{A_1\x A_2 \x B}$ and a $\coprod$-quantifier-free object $\gamma \in E_{A_1}$.  Hence, for every arrow $\arrow{\pr_1^*(\gamma)}{h}{\coprod_{\angbr{\pr_1}{\pr_2}}(\alpha)}$, there is a unique pair $(g,\ovln{h})$ where $\arrow{A_1\x A_2}{g}{B}$ and $\arrow{\pr_1^*(\gamma)}{\ovln{h}}{\angbr{\pr_1,\pr_2}{g}^*(\alpha)}$. Since $\mB$ has exponents, then we have that $g$ induces a unique arrow $\arrow{A_1}{m}{B^{A_2}}$ such $\angbr{\pr_1,\pr_2}{g}=\angbr{\pr_1,\pr_2}{\ev \angbr{\pr_1}{\pr_2}}\angbr{\pr_1,\pr_2}{m\pr_1}$. Therefore we have an arrow
\[\arrow{\pr_1^*(\gamma)}{\ovln{h}}{ \angbr{\pr_1,\pr_2}{m\pr_1}^*(\angbr{\pr_1,\pr_2}{\ev \angbr{\pr_1}{\pr_2}}(\alpha))}.\]
Since $\mp$ has simply products, $\ovln{h}$ induces a unique arrow 
\[\arrow{\gamma}{\ovln{\ovln{h}}}{\coprod_{\pr_1} \angbr{\pr_1,\pr_2}{m\pr_1}^*(\angbr{\pr_1,\pr_2}{\ev \angbr{\pr_1}{\pr_2}}(\alpha))}.\]
Notice that the following square
\[\xymatrix{ A_1\x A_2 \ar[r]^{\pr_1} \ar[d]_{\angbr{\pr_1,pr_2}{m\pr_1}} & A_1 \ar[d]^{\angbr{id_{A_1}}{m}} \\
A_1\x A_2\x B^{A_2} \ar[r]_{\angbr{\pr_1}{\pr_3}} & A_1\x B^{A_2}}\]
is a pullback, hence by the BCC we have that $\prod_{\pr_1} \angbr{\pr_1,\pr_2}{m\pr_1}^*\cong \angbr{\id_{A_1}}{m}\prod_{\angbr{\pr_1}{\pr_3}}$. Thus, we get that an arrow $f$ induces a unique pair of arrows $(m,\ovln{\ovln{h}})$, but again (since $\mp$ has enough $\coprod$-quantifier-free objects) this pair represents a unique arrow of the fibre $\mE_{A_1}(\gamma, \coprod_{\pr_{3}}\prod_{\angbr{\pr_1}{\pr_1}}(\angbr{\pr_1,\pr_2}{\ev \angbr{\pr_1}{\pr_2}}(\alpha))$, i.e. the fibre 
\[\mE_{A_1\x A_2}(\pr_1^*(\gamma),\coprod_{\angbr{\pr_1}{\pr_2}}(\alpha))\] is isomorphic to \[\mE_{A_1}(\gamma, \coprod_{\pr_{1}}\prod_{\angbr{\pr_1}{\pr_3}}(\angbr{\pr_1,\pr_2}{\ev \angbr{\pr_1}{\pr_2}}(\alpha))\]
and this means exactly that 
\[\prod_{\pr_1}\coprod_{\angbr{\pr_1}{\pr_2}}(\alpha)\cong \coprod_{\pr_{1}}\prod_{\angbr{\pr_1}{\pr_3}}(\angbr{\pr_1,\pr_2}{\ev \angbr{\pr_1}{\pr_2}}(\alpha).\]
The proof for the general case where $\gamma$ is a generic element of the fibre and not a $\coprod$-quantifier-free object, follows by the observation that the arrows $\arrow{\pr^*(\gamma)}{}{\beta}$ are in bijection with those of the form $\arrow{\pr_1^*(\gamma')}{}{\coprod_{\pr_2}\beta}$ for 
appropriate projections, and where $\gamma'$ is the $\coprod$-quantifier-free element which covers $\gamma$.
\end{proof}

Combining Definitions \ref{definition enought-coprod-free-quantifaier}, \ref{definition enought-prod-free-quantifaier} and \ref{definition skolem fibration}, we introduce the notion of a \emph{G\"odel fibration}. The idea is that a G\"odel fibration is a Skolem fibration, such that every formula $\alpha(i)$ is equivalent to a formula in \emph{prenex normal form} with respect to $\mp$, i.e. there exists a formula $\beta(x,y,i)$ free from quantifiers, such that $\alpha(i)\cong \exists x \forall y \beta(x,y,i)$.

\begin{definition}\label{definition PNF-Skolem fibration}
A Skolem fibration $\fibration{\mE}{\mp}{\mB}$ is called a \bemph{G\"odel} fibration if the sub-fibration $\fibration{\mE'}{\mp'}{\mB}$, whose elements are $\coprod$-quantifier-free objects, has enough $\prod$-quantifier-free objects.

\end{definition}
\begin{remark}
Observe that if we consider a G\"odel fibration $\fibration{\mE}{\mp}{\mB}$, an element which is a $\prod$-quantifier-free object in the sub-fibration $\mp'$ could not be $\prod$-quantifier-free object  of the G\"odel fibration. This because in Definition \ref{definition PNF-Skolem fibration} of G\"odel fibration, the universal property of being a $\prod$-quantifier-free object is required to hold only with respect to the $\coprod$-quantifier-free objects of $\mp$.
\end{remark}
The following proposition is an immediate consequence of Definition \ref{definition PNF-Skolem fibration}.
\begin{proposition}[Prenex normal form]
In a G\"odel fibration $\fibration{\mE}{\mp}{\mB}$, for every element $\alpha$ of a fibre $\mE_I$ there exists an element $\beta$ such that
\[ \alpha(i)\cong \exists x\forall y \beta (x,y,i)\]
and $\beta$ is $\prod$-quantifier-free in the sub-fibration $\mp'$ of $\coprod$-quantifier-free objects of $\mp$.
\end{proposition}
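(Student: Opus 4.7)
The plan is to unfold the two successive \emph{enough quantifier-free} conditions packaged into Definition \ref{definition PNF-Skolem fibration}, reading them off in the logical order $\exists$ then $\forall$. Start with an arbitrary $\alpha \in \mE_I$. Since $\mp$ is in particular a Skolem fibration, it has enough $\coprod$-quantifier-free objects (Definition \ref{definition enought-coprod-free-quantifaier}), so there exist an object $X$ of $\mB$ and a $\coprod$-quantifier-free $\gamma \in \mE_{I \times X}$ with $\alpha \cong \coprod_{\pr_I} \gamma$. Logically this is already the outer $\exists x$ of the prenex form.

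Next, consider $\gamma$ as an object of the sub-fibration $\mp'$ of $\coprod$-quantifier-free objects (Remark \ref{remark coproduct quant free form a subfibrations}). By Definition \ref{definition PNF-Skolem fibration} the fibration $\mp'$ has enough $\prod$-quantifier-free objects, so there exist an object $Y$ of $\mB$ and an object $\beta$ in $\mE'_{I \times X \times Y}$ which is $\prod$-quantifier-free in $\mp'$ such that $\gamma \cong \prod_{\pr_{I\times X}}^{\mp'} \beta$, where the product is taken inside $\mp'$. This gives the inner $\forall y$.

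The key point—and the main thing to verify—is that the product computed in $\mp'$ agrees with the product computed in $\mp$. This is exactly guaranteed by the last clause of Definition \ref{definition skolem fibration}: if $\beta$ is $\coprod$-quantifier-free in $\mp$, then so is $\prod_{\pr}^{\mp}\beta$, so the inclusion $\mp' \hookrightarrow \mp$ preserves simple products, and the right adjoints chosen fibrewise in $\mp'$ are the restrictions of those in $\mp$. Therefore $\gamma \cong \prod_{\pr_{I\times X}}^{\mp} \beta$ holds in $\mp$ as well.

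Combining the two isomorphisms, we obtain
\[
\alpha \;\cong\; \coprod_{\pr_I} \gamma \;\cong\; \coprod_{\pr_I}\prod_{\pr_{I\times X}} \beta,
\]
which, written in the internal language of $\mp$, is precisely $\alpha(i) \cong \exists x\, \forall y\, \beta(x,y,i)$, with $\beta$ a $\prod$-quantifier-free object of the sub-fibration $\mp'$, as required. The only nontrivial observation in the argument is the preservation of simple products by $\mp' \hookrightarrow \mp$, which is why the stability clause in the definition of a Skolem fibration is needed; everything else is a direct unwinding of the two ``enough'' hypotheses.
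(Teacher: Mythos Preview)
Your proof is correct and follows essentially the same two-step unfolding as the paper's own argument: first use enough $\coprod$-quantifier-free objects to peel off the outer $\exists$, then use enough $\prod$-quantifier-free objects in $\mp'$ to obtain the inner $\forall$. Your explicit justification that the simple products in $\mp'$ agree with those in $\mp$ (via the stability clause of Definition~\ref{definition skolem fibration}) is a welcome clarification of a point the paper leaves implicit in the remark following that definition, but it is elaboration rather than a different approach.
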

\begin{proof}
Let us consider an element $\alpha$ of the fibre $\mE_I$. Since $\mp$ is a G\"odel fibration, hence in particular a Skolem fibration, the fibration $\mp$ has enough $\coprod$-quantifier-free objects, and hence there exists an element $\gamma$ in the fibre $\mE_{I\times X}$ such that $\alpha\cong \coprod_{\pr_I}(\gamma)$. Therefore, since the sub-fibration $\mp'$ has enough $\prod$-quantifier-free objects, there exists a $\prod$-quantifier free element $\beta$ of $\mp'$ in the fibre $\mE_{I\times X\times Y}$ such that $\gamma\cong\prod_{\pr_X}(\beta)$, and hence $\alpha\cong \coprod_{\pr_I}\prod_{\pr_X}(\beta)$.
\end{proof} 
\section{An intrinsic description of the dialectica construction}

\begin{proposition}\label{proposition existence property}
Let $\fibration{\mE}{\mp}{\mB}$ be a fibration, and let us consider the simple coproduct completion $\coprodcomp{\mp}$. Let $I$ be an object of $\mB$ and let $\alpha$ be an object of its fibre $\mE_I$. Then every object of the form $(I,1,\alpha)$ in the fibre $\coprodcomp{\mp}_I$ is $\coprod$-quantifier-free element of $\coprodcomp{\mp}$.



\end{proposition}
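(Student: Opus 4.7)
The plan is to unwind the definitions of the objects and morphisms involved so that the universal property becomes a purely diagrammatic observation about pairing maps into a product $B\times Z$. The key simplification is that the middle component of $(I,1,\alpha)$ is the terminal object $1$ of $\mB$, so a map out of $A\times 1$ is nothing but a map out of $A$.

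First, I would spell out, via Remark~\ref{reindexing}, the reindexing $\clift{f}(I,1,\alpha)$ along an arbitrary $\arrow{A}{f}{I}$; modulo the canonical isomorphism $A\times 1\cong A$ this is just $(A,1,\clift{f}\alpha)$. Next, I would fix an object $\beta=(A\times B,Z,\gamma)$ of $\coprodcomp{\mp}_{A\times B}$ and, by Remark~\ref{coprodcomp has simple coprod}, compute $\coprod_{\pi_A}\beta=(A,B\times Z,\gamma')$, where $\gamma'$ is the appropriate reindexing of $\gamma$. With these at hand, a vertical arrow $\arrow{\clift{f}(I,1,\alpha)}{h}{\coprod_{\pi_A}\beta}$ in $\coprodcomp{\mp}_A$ unpacks (using the vertical-morphism formula from the definition of $\coprodcomp{\mp}$) as a pair $(h_0,\phi)$ with $\arrow{A\times 1}{h_0}{B\times Z}$ in $\mB$ and $\arrow{\clift{f}\alpha}{\phi}{\langle\pi_A,h_0\pi_A\rangle^{\ast}\gamma'}$ in $\mE_A$.

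The decisive step is the observation that, since the domain $A\times 1\cong A$ is "free" in $\mB$, the map $h_0$ factors uniquely as a pair $\langle g,k\rangle$ with $\arrow{A}{g}{B}$ and $\arrow{A}{k}{Z}$. I would take $g$ as the required witness and define the factorisation $\overline{h}:\clift{f}(I,1,\alpha)\to \langle 1_A,g\rangle^{\ast}\beta$ as the vertical arrow whose first component is $k$ and whose second component is $\phi$ (rewritten via the equality of reindexings $\langle \pi_A, h_0\pi_A\rangle^{\ast}\gamma'=\langle \pi_A, k\pi_A\rangle^{\ast}\langle 1_A,g\rangle^{\ast}\gamma$). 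Uniqueness of $g$ and $\overline{h}$ is then forced by the uniqueness of the pairing $h_0=\langle g,k\rangle$ and the fact that $\phi$ was given.

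Finally, I would check that $h$ equals the composite $\langle 1_A,g\rangle^{\ast}\eta_\beta\circ\overline{h}$ of the expected diagram. Concretely, the unit $\eta_\beta$ of the adjunction $\coprod_{\pi_A}\dashv \pi_A^{\ast}$ is, on first components, the map $\arrow{A\times B}{\langle\pi_B,\,\cdot\,\rangle}{B\times Z}$ recording the $B$-coordinate, and composing with $\langle 1_A,g\rangle^{\ast}$ restores $g$ into the first coordinate; so the first component of $\langle 1_A,g\rangle^{\ast}\eta_\beta\circ\overline{h}$ is precisely $\langle g,k\rangle=h_0$, while the second component is $\phi$. I expect the main obstacle not to be conceptual but notational: one must carefully track several reindexings of $\gamma$ along the evident projections and pairings to make sure the identities line up. Once the combinatorics is organised around the bijection $(h_0\colon A\to B\times Z)\leftrightarrow(g\colon A\to B,\;k\colon A\to Z)$, the verification collapses to the required universal property.
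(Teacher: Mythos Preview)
Your proposal is correct and follows essentially the same route as the paper: unwind the data of a vertical arrow out of $(A,1,f^{\ast}\alpha)$ into $\coprod_{\pi_A}\beta$, observe that its first component is a single map into a product and hence splits uniquely as a pair $\langle g,k\rangle$, and take $g$ as the witness and $(k,\phi)$ as $\overline{h}$. The only organisational difference is that you treat an arbitrary reindexing $f\colon A\to I$ from the outset, whereas the paper's proof carries out the computation only for $f=\id_I$ and then (in the next proposition) appeals to the fact that $f^{\ast}(I,1,\alpha)=(A,1,f^{\ast}\alpha)$ is again of the required shape to cover the general case; your version makes the full statement self-contained.
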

\begin{proof}
Let us consider an arrow
$$\arrow{\eta_{\mp}(\alpha)=(I,1,\alpha)}{(f,\phi)}{\coprod_{\pr_I}(I\x A, B,\beta)}$$
where $\arrow{I}{f=\angbr{g_1}{g_2}}{A\x B}$. We are going to prove that $$\arrow{\eta_{\mp}(\alpha)}{(g_2,\phi)}{\clift{\angbr{1_I}{g_1}}(I\x A, B,\beta)}$$ is an arrow of $\coprodcomp{\mp}_{I}$ and that $(f,\phi)=(\clift{\angbr{1_I}{g_1}}\eta_{\beta})(g_2,\phi)$, where $\eta_{\beta}$ is the unit at $(I\x A,B,\beta)$ of the adjunction $\coprod_{\pr_I} \dashv \clift{\pr_I}$.

Moreover, we have to prove that such a choice of arrows $\arrow{I}{g}{A}$ of $\mB$ and $\arrow{\eta_\mp(\alpha)}{\ovln{h}}{\langle 1_I,h \rangle^*(I\times A,B,\beta)}$ of $\coprodcomp{p}_I$ is unique. That is, whenever the equality: $$(f,\phi)=(\clift{\angbr{1_I}{g}}\eta_{\beta})\ovln{h}$$ holds, it is the case that $g=g_1$ and $\ovln{h}=(g_2,\phi)$.

By Remarks \ref{reindexing} and \ref{coprodcomp has simple coprod}, it is the case that $\coprod_{\pr_I}(I\x A, B,\beta)=(I,A\x B,\beta)$, and that $\clift{\angbr{1_I}{g_1}}(I,A\x B,\beta)=(I,B,\clift{\angbr{\pr_I,g_1\pr_I}{\pr_B}}\beta)$, where $\pr_I$ and $\pr_B$ are the projections from $I\x B$. Then: 
$$\arrow{\eta_{\mp}(\alpha)}{(g_2,\phi)}{\clift{\angbr{1_I}{g_1}}(I\x A, B,\beta)=(I,B,\clift{\angbr{\pr_I,g_1\pr_I}{\pr_B}}\beta)}$$
is a morphism of $\coprodcomp{\mp}_I$ since $\arrow{I}{g_2}{B}$ is an arrow of $\mB$ and: 
$$\arrow{\alpha}{\phi}{\clift{\angbr{1_I}{g_2}}\clift{\angbr{\pr_I,g_1\pr_I}{\pr_B}}\beta=\clift{\angbr{1_I,g_1}{g_2}}\beta}=\langle 1_I,f\rangle ^*\beta$$
is a vertical morphism of $\mE_I$. Observe that $\eta_\beta$ is the transpose along the adjunction  $\coprod_{\pi_I}\dashv \pi^*_I$ of the identity arrow of $(I,A\times B,\beta)=\coprod_{\pi_I}(I\times A,B,\beta)$. Hence $\eta_\beta$ is the arrow: $$\arrow{(I\times A,B,\beta)}{(\pi_{A\times B},1_\beta)}{(I\times A,A\times B,\langle \pi_I,\pi_{A\times B} \rangle^*\beta)}$$ and its $\langle 1_I,g_1\rangle$-reindexing is the arrow: $$\arrow{(I,B,\langle\pi_I,g_1\pi_I,\pi_B \rangle^*\beta)}{(\langle g_1\pi_I,\pi_B\rangle,\;1_{\langle\pi_I,g_1\pi_I,\pi_B \rangle^*\beta})}{(I,A\times B,\beta)} $$ whose precomposition by $(g_2,\phi)$ yields indeed the arrow $(f,\phi)$.

\medskip

Let us assume that the equality: \begin{equation} (f,\phi)=(\clift{\angbr{1_I}{g}}\eta_{\beta})\ovln{h} \end{equation} holds for some arrow $\arrow{I}{g}{A}$ of $\mB$ and $\arrow{\eta_\mp(\alpha)}{\ovln{h}=(h_2,\psi)}{\langle 1_I,h \rangle^*(I\times A,B,\beta)}$ of $\coprodcomp{p}_I$. As it is the case that: $$\clift{\angbr{1_I}{g}}\eta_{\beta}=(\langle g\pi_I,\pi_B\rangle,\;1_{\langle\pi_I,g\pi_I,\pi_B \rangle^*\beta})$$ one might compute the right-hand side of the equality (1) and infer the equality: $$(f,\phi)=(\langle g,h_2\rangle,\psi)$$ which implies that $g=g_1$ and $\ovln{h}=(h_2,\psi)=(g_2,\phi)$. \end{proof}

\begin{remark}\label{remark every arrow in the fibres factors in a canonical way}
Let $\fibration{\mE}{\mp}{\mB}$ be a fibration and let $I$ be an object of $\mB$. Let us consider an arrow $\arrow{(I,A,\alpha)}{(f,\phi)}{(I,B,\beta)}$ of $\coprodcomp{\mp}_I$. As $(I,A,\alpha)=\coprod_{\pi_I}(I\times A,1,\alpha)$, we might consider its exponential transpose $\arrow{(I\times A,1,\alpha)}{(1_{I \x A},f,\phi)}{\pi_I^*(I,B,\beta)=(I\times A,B,\langle \pi_I,\pi_B\rangle^*\beta)}$, which is the unique arrow making the following diagram: $$\xymatrix{ \coprod_{\pi_I}(I\x A,1,\alpha)\ar[rr]^{(1_I,f,\phi)} \ar[d]_{\coprod_{\pi_I}(1_{I\times A},f,\phi)}&& (I,B,\beta)\\
\coprod_{\pi_I} \pr_I^*(I,B,\beta)\ar[rru]_{\varepsilon_{(I,B,\beta)}}
}$$ commute. Moreover, as $(I\times A,B,\langle \pi_I,\pi_B \rangle^*\beta)=\coprod_{\pi_{I\times A}}(I\times A\times B,1,\langle \pi_I,\pi_B \rangle^*\beta)$, by Proposition \ref{proposition existence property}, the arrow $(1_{I\times A},f,\phi)$ factors uniquely as the arrow: $$(I\times A,1,\alpha)\xrightarrow{(!,\phi)}\langle 1_{I\times A},f \rangle^*(I\times A\times B,1,\langle\pi_I,\pi_B\rangle^*\beta)=(I\times A,1,\langle \pi_I,f\rangle^*\beta)$$ (which can be uniquely expressed as the image $(\ovln{\mp}\hookrightarrow\coprodcomp{\ovln{\mp}})\phi$ of the arrow $\arrow{\alpha}{\phi}{\langle \pi_I,f\rangle^*\beta}$ of $\ovln{E}_{I\times A}$) followed by the arrow: $$(I\times A,1,\langle \pi_I,f\rangle^*\beta)\xrightarrow{\langle 1_{I\times A},f\rangle^*\eta_{(I\times A\times B,1,\langle \pi_I,\pi_B \rangle^*\beta)}}\coprod_{\pi_{I\times A}}(I\times A\times B,1,\langle \pi_I,\pi_B \rangle^*\beta)$$ which is the $\langle 1_{I\times A},f \rangle$-reindexing of the unit: $$\arrow{(I\times A\times B,1,\langle \pi_I,\pi_B\rangle^*\beta)}{\eta_{(I\times A\times B,1,\langle \pi_I,\pi_B\rangle^*\beta)}}{(I\times A\times B,B,\langle \pi_I,\pi_B\rangle^*\beta)}$$ of the adjunction $\coprod_{\pi_{I\times A}}\dashv \pi_{I\times A}^*$.
\end{remark}
Notice that in Proposition \ref{proposition existence property} the elements of the form $(I,1,\alpha)$ represent propositions which are free from the existential quantifier.

\begin{remark}
The analogous of Remark \ref{remark every arrow in the fibres factors in a canonical way} can be proved for a fibration having enough $\coprod$-quantifier-free objects. In other words, in this kind of fibrations the arrows of the fibres are completely described by arrows between quantifier-free objects, unit and counit of adjunctions given by coproducts.
\end{remark}
\begin{proposition}
Whenever $\fibration{\mE}{\mp}{\mB}$ is a fibration, then the $\coprod$-quantifier-free objects of $\coprodcomp{\mp}$ are up to isomorphism the elements of the form $(I,1,\alpha)$. In particular, since every object $(I,B,\beta)$ of $\coprodcomp{\mp}$ satisfies: $$(I,B,\beta)\cong\coprod_{\pi_I}(I\times B,1,\beta)$$ where $\pi_I$ is the projection $I\times B \to I$, it is the case that $\coprodcomp{\mp}$ has enough $\coprod$-quantifier-free objects.

\proof
Whenever $f$ is an arrow $A \to I$ of $\mB$, it is the case that the $f$-reindexing of $(I,1,\alpha)$ is the triple $(A,1,f^*\alpha)$, which is still a \textit{quantifier free formula}, that is, its second component is terminal in $\mB$. Hence, the $\coprod$-quantifier-freeness of $(I,1,\alpha)$ follows from Proposition \ref{proposition existence property}.

Viceversa, let us assume that the triple $(I,A,\alpha)$ is $\coprod$-quantifier-free and let us consider its identity arrow $(I,A,\alpha)\to(I,A,\alpha)=\coprod_{\pi_I}(I\times A,1,\alpha)$. By $\coprod$-quantifier-freeness, there are an arrow $\arrow{I}{g}{A}$ of $\mB$ and an arrow: $$\arrow{(I,A,\alpha)}{(\arrow{I\times A}{!}{1},\;\arrow{\alpha}{\phi}{\pi_I^*\langle 1_I,g \rangle^*\alpha=\langle\pi_I,g\pi_I \rangle^*\alpha})}{\langle 1_I,g \rangle^*(I\times A,1,\alpha)=(I,1,\langle1_I,g \rangle^*\alpha)}$$ of $\coprodcomp{\mp}_I$ such that the identity arrow $(\pi_A,1_\alpha)$ of $(I,A,\alpha)$ equals the composition: $$\Big(\;\arrow{(I,A,\alpha)}{(\arrow{I\times A}{!}{1},\;\phi)}{(I,1,\langle1_I,g \rangle^*\alpha)}\xrightarrow{(g,1_{\langle 1_I,g\rangle^*\alpha})}(I,A,\alpha)\;\Big)$$ where the couple $(g,1_{\langle 1_I,g\rangle^*\alpha})$ is nothing but the $\langle 1_I,g\rangle$-reindexing of the unit at $(I\times A,1,\alpha)$ of the adjunction $\coprod_{\pi_I}\dashv \pi_I^*$. We infer by this arrow equality that it needs to be the case that $(\arrow{I\times A}{\pi_A}{A})=(\arrow{I\times A}{\pi_I}{I}\xrightarrow{g}A)$ and that: $$(\;\arrow{\alpha}{\phi}{\langle \pi_I,g\pi_I\rangle^*\alpha=\langle \pi_I,\pi_A\rangle^*\alpha=\alpha}\xrightarrow{1_\alpha}\alpha\;)=1_\alpha$$ which means that $\phi=1_\alpha$. Finally we observe that the composition: $$(I,1,\langle 1_I,g\rangle^*\alpha)\xrightarrow{(g,1_{\langle1_I,g\rangle^*\alpha})}(I,A,\alpha)\xrightarrow{(\arrow{I\times A}{!}{1},\;\phi=1_\alpha)}(I,1,\langle 1_I,g\rangle^*\alpha)$$ equals the identity arrow $(\arrow{I\times 1}{!}{1},1_{\langle 1_I,g\rangle^*\alpha})$. This concludes that the couple $(\arrow{I\times A}{!}{1},1_\alpha)$ is actually an isomorphism $(I,A,\alpha)\cong (I,1,\langle 1_I,g\rangle^*\alpha)$.
\endproof
\end{proposition}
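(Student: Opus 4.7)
The plan is to prove each direction of the characterization and then read off the ``enough'' statement. First, for the easy direction, I would show that every triple $(I,1,\alpha)$ is $\coprod$-quantifier-free in $\coprodcomp{\mp}$. By Proposition~\ref{proposition existence property}, such an object does satisfy the universal property with respect to the identity reindexing; but to get the full definition of $\coprod$-quantifier-free (see Remark~\ref{remark coproduct quant free form a subfibrations}) I must also check that this property is preserved along reindexing. Using the explicit description of reindexing in $\coprodcomp{\mp}$ given in Remark~\ref{reindexing}, the $f$-reindexing of $(I,1,\alpha)$ along any $\arrow{A}{f}{I}$ is simply $(A,1,f^*\alpha)$, whose second component is again terminal, so Proposition~\ref{proposition existence property} applies again. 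This yields one inclusion up to isomorphism.

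For the converse, I would start from an arbitrary $\coprod$-quantifier-free object $(I,A,\alpha)$ and apply its universal property to its own identity morphism. The key observation is that $(I,A,\alpha)=\coprod_{\pi_I}(I\times A,1,\alpha)$ (by Remark~\ref{coprodcomp has simple coprod}), so the identity has the shape required by Definition~\ref{definition P-atomic}, taking $f=\id_I$ and $\beta=(I\times A,1,\alpha)$. The universal property then produces a unique arrow $\arrow{I}{g}{A}$ in $\mB$ together with a unique vertical arrow $\arrow{(I,A,\alpha)}{\ovln{h}}{\langle 1_I,g\rangle^*(I\times A,1,\alpha)}$; this target is exactly $(I,1,\langle 1_I,g\rangle^*\alpha)$ by Remark~\ref{reindexing}. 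Unwinding, this provides a candidate map into an object of the desired form.

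The main obstacle will be to verify that this candidate map is in fact an isomorphism rather than just a morphism; this is the step one must do with care. I would exploit the \emph{uniqueness} portion of the universal property: decomposing the identity of $(I,A,\alpha)$ through the factorization forces both the component $\arrow{I\times A}{\pi_A}{A}$ to coincide with $g\pi_I$ and the vertical component $\phi$ to be $1_\alpha$. Composing the arrow $\ovln{h}$ with the reindexed unit in the reverse order then yields the identity on $(I,1,\langle 1_I,g\rangle^*\alpha)$ on the nose, since its first component is $\arrow{I\times 1}{!}{1}$ and its vertical component is the identity. Hence the two triangles commute and we obtain an isomorphism $(I,A,\alpha)\cong(I,1,\langle 1_I,g\rangle^*\alpha)$.

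Finally, the ``in particular'' statement is immediate: for any object $(I,B,\beta)$ of $\coprodcomp{\mp}$ the isomorphism $(I,B,\beta)\cong\coprod_{\pi_I}(I\times B,1,\beta)$ given in Remark~\ref{coprodcomp has simple coprod} exhibits $(I,B,\beta)$ as a simple coproduct of the $\coprod$-quantifier-free object $(I\times B,1,\beta)$, witnessing Definition~\ref{definition enought-coprod-free-quantifaier}. The only subtle moment in the whole argument is the uniqueness-based computation showing that the $\coprod$-quantifier-freeness hypothesis collapses the second component of $(I,A,\alpha)$ to something terminal up to isomorphism; once that bookkeeping is done, the rest is direct.
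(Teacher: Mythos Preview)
Your proposal is correct and follows essentially the same route as the paper's proof: reindexing stability plus Proposition~\ref{proposition existence property} for the forward direction, and applying the $\coprod$-quantifier-free universal property to the identity of $(I,A,\alpha)$ (viewed as a map into $\coprod_{\pi_I}(I\times A,1,\alpha)$) for the converse, then reading off $\pi_A=g\pi_I$ and $\phi=1_\alpha$ and checking the reverse composite. One small remark: in the converse you attribute the equalities $\pi_A=g\pi_I$ and $\phi=1_\alpha$ to the \emph{uniqueness} clause, but in fact they are obtained simply by comparing the two components of the factorization equation $(\pi_A,1_\alpha)=(g,1_{\langle 1_I,g\rangle^*\alpha})\circ(!,\phi)$; uniqueness plays no role at that step.
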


Now we have all the instrument to give an \emph{internal} description of the free-algebras for the pseudo-monad which adds the simple coproducts to a given fibration.
\begin{theorem}\label{proposition free-algebras coprod}
A fibration $\fibration{\mE}{\mp}{\mB}$ with simple coproducts is an instance of simple coproduct completion if and only if it has enough $\coprod$-quantifier-free objects.
\end{theorem}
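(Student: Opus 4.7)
The \emph{only if} direction is immediate from the preceding proposition: whenever $\mp \cong \coprodcomp{\mq}$ for some cloven split fibration $\mq$ over $\mB$, the triples $(I,1,\alpha)$ are $\coprod$-quantifier-free in $\coprodcomp{\mq}$ and every object factors as $(I,B,\beta)\cong \coprod_{\pi_I}(I\times B,1,\beta)$, so $\mp$ has enough $\coprod$-quantifier-free objects.

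For the \emph{if} direction, my plan is to take $\fibration{\mE'}{\mp'}{\mB}$ to be the sub-fibration of $\coprod$-quantifier-free objects of $\mp$, which is well defined by Remark \ref{remark coproduct quant free form a subfibrations}, and to construct a morphism of fibrations over $\mB$
\[ F\colon \coprodcomp{\mp'} \longrightarrow \mp,\qquad (I,X,\alpha)\longmapsto \coprod_{\pi_I}\alpha. \]
On a morphism $(f_0,f_1,\phi)\colon (I,X,\alpha)\to (J,Y,\beta)$ of $\coprodcomp{\mp'}$, the value $F(f_0,f_1,\phi)$ is the arrow over $f_0$ whose vertical part is the transpose, along the adjunction $\coprod_{\pi_I}\dashv \pi_I^*$ of $\mp$, of the composite
\[ \alpha\xrightarrow{\phi}\langle f_0\pi_I,f_1\rangle^*\beta\xrightarrow{\langle 1_{I\times X},f_1\rangle^*\eta}\coprod_{\pi_{I\times X}}\langle f_0\pi_I,\pi_Y\rangle^*\beta, \]
where the target is identified with $\pi_I^*f_0^*\coprod_{\pi_J}\beta$ by Beck--Chevalley. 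Functoriality of $F$ reduces to the reindexing formulas of Remark \ref{reindexing} and the naturality of $\eta$.

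Once $F$ is in place I would invoke Lemma \ref{fully faithful}: it suffices to check that each restriction $F_I\colon \coprodcomp{\mp'}_I\to \mE_I$ is an equivalence of categories. Essential surjectivity of $F_I$ is immediate from the hypothesis that $\mp$ has enough $\coprod$-quantifier-free objects. For fully faithfulness, given $(I,X,\alpha)$, $(I,Y,\beta)$ and a vertical arrow $\psi\colon \coprod_{\pi_I}\alpha\to\coprod_{\pi_I}\beta$ in $\mE_I$, I would transpose $\psi$ along $\coprod_{\pi_I}\dashv \pi_I^*$, rewrite the codomain as $\coprod_{\pi_{I\times X}}\langle \pi_I,\pi_Y\rangle^*\beta$ via Beck--Chevalley, and apply Definition \ref{definition P-atomic} to the resulting arrow out of the $\coprod$-quantifier-free object $\alpha$ (with $f=1_{I\times X}$). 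This yields a unique pair $(g\colon I\times X\to Y,\;\overline{\psi}\colon \alpha\to \langle \pi_I,g\rangle^*\beta)$; since both the source and target of $\overline{\psi}$ are $\coprod$-quantifier-free, $\overline{\psi}$ lives in $\mE'$, so the pair is precisely a morphism $(I,X,\alpha)\to (I,Y,\beta)$ in $\coprodcomp{\mp'}_I$, and it is the unique preimage of $\psi$ under $F_I$ by the uniqueness clause of Definition \ref{definition P-atomic}. Essential surjectivity of $F$ on the total category follows fibrewise.

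The main technical obstacle I expect is aligning the two Beck--Chevalley isomorphisms used above (one to define $F$ on morphisms, one to decode fibre-wise fullness) so that the universal property of $\coprod$-quantifier-freeness can be invoked in exactly the shape of Definition \ref{definition P-atomic}; once these bookkeeping steps are carried out, the equivalence $\mp \cong \coprodcomp{\mp'}$ follows formally from Lemma \ref{fully faithful}.
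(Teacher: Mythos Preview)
Your proposal is correct and follows essentially the same strategy as the paper: take $\mp'$ to be the subfibration of $\coprod$-quantifier-free objects, produce a comparison $F\colon \coprodcomp{\mp'}\to\mp$, and show it is a fibrewise equivalence via Lemma~\ref{fully faithful}, with essential surjectivity coming from the enough-hypothesis and full faithfulness from the uniqueness clause of Definition~\ref{definition P-atomic}. The one cosmetic difference is that the paper obtains $F$ abstractly from the universal property of $\coprodcomp{-}$ (since $\mp$, having simple coproducts, is a pseudo-algebra, the inclusion $\mp'\hookrightarrow\mp$ extends uniquely to a coproduct-preserving $F$), which sidesteps the Beck--Chevalley bookkeeping you flag as the main technical obstacle.
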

\begin{proof}
We define $\fibration{\ovln{\mE}}{\ovln{\mp}}{\mB}$ the full-subfibration of $\fibration{\mE}{\mp}{\mB}$ such that the objects of $\ovln{\mE}$ are the $\coprod$-free-quantifers. By the universal property of the inclusion morphism $\ovln{\mp}\hookrightarrow\coprodcomp{\ovln{\mp}}$, there is unique a morphism of fibrations with simple coproducts $\fibration {\coprodcomp{\ovln{\mp}}}{F}{\mp}$ commuting with the inclusion morphisms $\ovln{\mp}\overset{\eta_{\ovln{\mp}}}{\hookrightarrow}\coprodcomp{\ovln{\mp}}$ and $\ovln{\mp}\hookrightarrow\mp$. We claim that $F$ is an equivalence of fibrations. At first we observe its essential surjectivity and secondly its full faithfulness. From now on, whenever $\pi$ is a projection in $\mB$, we indicate as $\coprod_\pi$ the left adjoint to the $\pi$-weakening w.r.t. $\coprodcomp{\ovln{p}}$ and as $\sum_\pi$ the one w.r.t. $\mp$. Observe that: $$F(I,1,\gamma)=F\big(\ovln{\mp}\overset{\eta_{\ovln{\mp}}}{\hookrightarrow}\coprodcomp{\ovln{\mp}}\big)\gamma = (\ovln{\mp}\hookrightarrow \mp)\gamma = \gamma$$ for every $I$ in $\mB$ and every $\gamma$ in $\ovln{E}_I$.

\medskip

\textit{Essential surjectivity.} Let $\alpha$ be an object of $\mE$ and let $I$ be the object $\mp\alpha$ of $\mB$. Since $\mp$ has enough $\coprod$-quantifier-free objects, there are $J$ in $\mB$ and $\beta$ in $\mE_{I \times J}$ such that $\sum_{\pi_I}\beta\cong \alpha$. Since $F$ preserves simple coproducts, it is the case that: $$\begin{aligned}F(I,J,\beta)&=F\coprod_{\pi_I}(I\times J,1,\beta)=\sum_{\pi_I}F(I\times J,1,\beta)=\sum_{\pi_I}\beta\end{aligned}$$ and we are done. Observe that $(I,J,\beta)$ is an object of $\mE_I$, hence the functor $\mE_I\to \mE_I'$ induced by $F$ is essentially surjective as well.

\medskip

\textit{Full faithfulness.} We use Lemma \ref{fully faithful} and prove that, for a given object $I$ of $\mB$, the functor $F$ gives rise to an equivalence $\mE_I \to \mE'_I$. As the essential surjectivity of $F\upharpoonright_{\mE_I}\colon\mE_I \to \mE'_I$ follows by the previous part, we only need to observe its full faithfulness.

By Remark \ref{remark every arrow in the fibres factors in a canonical way} we write a given arrow $\arrow{(I,A,\alpha)}{(f,\phi)}{(I,B,\beta)}$ of $E_I$ as the composition: $$\varepsilon_{(I,B,\beta)}\Big(\coprod_{\pi_I}\langle 1_{I\times A},f\rangle^*\eta_{(I\times A\times B,1,\langle \pi_I,\pi_B \rangle^*\beta)}\Big)\Big(\coprod_{\pi_I}(\ovln{\mp}\hookrightarrow\coprodcomp{\ovln{\mp}})\phi\Big)$$ and this factorisation is unique, because of the uniqueness of adjoint transposition, because of the uniqueness-part of Proposition \ref{proposition existence property} and because of faithfulness of the functor $\ovln{\mp}\hookrightarrow\coprodcomp{\ovln{\mp}}$. As $F$ is forced to preserve simple coproducts and commutes with the inclusion morphisms $\ovln{\mp}\overset{\eta_{\ovln{\mp}}}{\hookrightarrow}\coprodcomp{\ovln{\mp}}$ and $\ovln{\mp}\hookrightarrow\mp$, the arrow $F(f,\phi)$ equals the composition: $$\varepsilon_{(\sum_{\pi_I}\beta)}\Big(\sum_{\pi_I}\langle 1_{I\times A},f\rangle^*\eta_{\langle \pi_I,\pi_B \rangle^*\beta}\Big)\Big(\sum_{\pi_I}\phi\Big)$$ which is indeed an arrow $\sum_{\pi_I}\alpha \to \sum_{\pi_I}\beta$. Observe that, analogously, every arrow $\sum_{\pi_I}\alpha \to \sum_{\pi_I}\beta$ of $\mE'_I$ can be uniquely factored as such a composition, again by the existence and the uniqueness of the adjoint transposition, by Definition \ref{definition P-atomic} (remind that $\mp$ is assumed to have $\coprod$-quantifier-free objects) and by full faithfulness of $\ovln{\mp}\hookrightarrow \mp$. Hence the function: $$\mE_I((I, A,\alpha),(I,B,\beta))\to \mE'_I\Big(\sum_{\pi_I}\alpha,\sum_{\pi_I}\beta\Big)$$ induced by $F\upharpoonright_{\mE_I}$ is bijective, i.e. $F\upharpoonright_{\mE_I}$ is fully faithful.
\end{proof}
Notice that the characterization of Theorem \ref{proposition free-algebras coprod} can be obtained also for the simple product completion thanks to the equivalence $\prodcomp{\mp}\cong \coprodcomp{\mp^{\op}}^{\op}$, see Proposition \ref{proposition coproc is dual to prod}.
\begin{theorem}\label{proposition free-algebras prod}
A fibration $\fibration{\mE}{\mp}{\mB}$ with simple products is an instance of simple product completion if and only if it has enough- $\prod$-quantifier-free objects.
\end{theorem}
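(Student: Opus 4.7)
The plan is to reduce the claim to its already-established dual, Theorem \ref{proposition free-algebras coprod}, via the fibrewise-opposite duality of Lemma \ref{opfibration} and Proposition \ref{proposition coproc is dual to prod}. First I would collect three ingredients: $(\mp^{\op})^{\op}\cong \mp$, the fact that $\mp$ has simple products if and only if $\mp^{\op}$ has simple coproducts, and the natural isomorphism $\prodcomp{\mp}\cong \coprodcomp{\mp^{\op}}^{\op}$.

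The crucial bookkeeping step is to verify that an object $\alpha \in \mE_I$ is $\prod$-quantifier-free with respect to $\mp$ if and only if the corresponding object of $\mE^{(\op)}_I \cong (\mE_I)^{\op}$ is $\coprod$-quantifier-free with respect to $\mp^{\op}$. A direct comparison of Definitions \ref{definition P-atomic} and \ref{definition P-atomic-product} shows that passing to the fibrewise opposite reverses the direction of the vertical arrow $h$, swaps $\coprod_{\pi_A}$ with $\prod_{\pi_A}$, and exchanges the unit of $\coprod_{\pi_A}\dashv \pi_A^*$ with the counit of $\pi_A^*\dashv \prod_{\pi_A}$; the existence and uniqueness clauses of the two universal properties then translate into each other. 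Consequently, $\mp$ has enough $\prod$-quantifier-free objects if and only if $\mp^{\op}$ has enough $\coprod$-quantifier-free objects.

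With this translation in hand, the biconditional becomes formal. If $\mp$ has enough $\prod$-quantifier-free objects, then $\mp^{\op}$ has enough $\coprod$-quantifier-free objects, so by Theorem \ref{proposition free-algebras coprod} there is a fibration $\mq$ with $\mp^{\op}\cong \coprodcomp{\mq}$. Taking the fibrewise opposite and applying Proposition \ref{proposition coproc is dual to prod} yields
\[\mp\cong (\mp^{\op})^{\op}\cong \coprodcomp{\mq}^{\op}\cong \prodcomp{\mq^{\op}},\]
so $\mp$ is an instance of the simple product completion. Conversely, if $\mp\cong \prodcomp{\mq'}$ for some fibration $\mq'$, then $\mp^{\op}\cong \coprodcomp{(\mq')^{\op}}$, whence Theorem \ref{proposition free-algebras coprod} forces $\mp^{\op}$ to have enough $\coprod$-quantifier-free objects, and the translation above transfers this property back to $\mp$.

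The only point that requires any genuine care is the verification that $\coprod$-quantifier-freeness and $\prod$-quantifier-freeness correspond under the fibrewise-opposite construction; once this dictionary is in place, the remainder is purely formal duality, so I do not anticipate any substantial obstacle.
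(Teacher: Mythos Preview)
Your proposal is correct and follows essentially the same approach as the paper: the paper's proof is the single line ``It follows by Lemma \ref{opfibration}, Theorem \ref{proposition free-algebras coprod} and Proposition \ref{proposition coproc is dual to prod},'' and you have simply unpacked what that line means, including the (implicit in the paper) verification that $\prod$-quantifier-freeness in $\mp$ corresponds to $\coprod$-quantifier-freeness in $\mp^{\op}$.
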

\begin{proof}
It follows by Lemma \ref{opfibration}, Theorem \ref{proposition free-algebras coprod} and Proposition \ref{proposition coproc is dual to prod}.
\end{proof}

Combining Theorem \ref{proposition free-algebras coprod} and Theorem \ref{proposition free-algebras prod} we can prove the following theorem, which allows us to recognize if an arbitrary fibration $\mp$ is an instance of the Dialectica construction or not, and if it is, we can construct the fibration $\mp'$ such that $\dial{\mp'}\cong \mp$.

\begin{theorem}\label{theorem godel if and only if Dialectica}
Let $\fibration{\mE}{\mp}{\mB}$ be a fibration with products, coproducts and such that $\mB$ is cartesian closed. Then there exists a fibration $\mp'$ such that for $\dial{\mp'}\cong \mp$ if and only if $\mp$ is a G\"odel fibration.

\end{theorem}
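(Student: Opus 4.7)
The plan is to combine Hofstra's decomposition $\dial{\mp'}\cong \coprodcomp{\prodcomp{\mp'}}$ with the two intrinsic characterizations of the coproduct and product completions given in Theorems \ref{proposition free-algebras coprod} and \ref{proposition free-algebras prod}. The strategy is to see a G\"odel fibration as admitting a canonical two-layer decomposition: peel off the outer $\coprodcomp$-layer by isolating the $\coprod$-quantifier-free objects, then peel off the inner $\prodcomp$-layer by isolating the $\prod$-quantifier-free objects of that sub-fibration.

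For the ``if'' direction, I would assume that $\mp$ is a G\"odel fibration and denote by $\ovln{\mp}$ the full sub-fibration of $\coprod$-quantifier-free objects of $\mp$, which is well-defined as a fibration by Remark \ref{remark coproduct quant free form a subfibrations}. Since $\mp$ has enough $\coprod$-quantifier-free objects, Theorem \ref{proposition free-algebras coprod} yields an equivalence $\mp\cong \coprodcomp{\ovln{\mp}}$. The stability clause of Definition \ref{definition skolem fibration} guarantees that $\ovln{\mp}$ inherits simple products from $\mp$, and by Definition \ref{definition PNF-Skolem fibration} it has enough $\prod$-quantifier-free objects; hence Theorem \ref{proposition free-algebras prod} produces $\ovln{\mp}\cong \prodcomp{\mp'}$, where $\mp'$ is the sub-fibration of $\prod$-quantifier-free objects of $\ovln{\mp}$. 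Composing the equivalences gives $\mp\cong \coprodcomp{\prodcomp{\mp'}}\cong \dial{\mp'}$, producing the desired $\mp'$.

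For the ``only if'' direction, assume that $\mp\cong \dial{\mp'}\cong \coprodcomp{\prodcomp{\mp'}}$ for some $\mp'$. The fibration $\mp$ already has simple products and coproducts by hypothesis, and $\mB$ is cartesian closed. Applying Theorem \ref{proposition free-algebras coprod} to $\coprodcomp{\prodcomp{\mp'}}$ gives enough $\coprod$-quantifier-free objects and identifies the full sub-fibration of such objects (those of the form $(I,1,\gamma)$) with $\prodcomp{\mp'}$ up to equivalence. Applying Theorem \ref{proposition free-algebras prod} to $\prodcomp{\mp'}$ yields enough $\prod$-quantifier-free objects in this sub-fibration, which is precisely the last clause of Definition \ref{definition PNF-Skolem fibration}.

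The main obstacle is verifying the stability clause of Definition \ref{definition skolem fibration}, namely that simple products in $\mp\cong \coprodcomp{\prodcomp{\mp'}}$ preserve $\coprod$-quantifier-free objects. Here I would invoke Hofstra's pseudo-distributive law $\arrow{\prodmonad\coprodmonad}{\lambda}{\coprodmonad\prodmonad}$ (Theorem 4.4 of \cite{hofstra2011}), which is available precisely because $\mB$ is cartesian closed; concretely, the Skolemisation isomorphism reduces the simple product $\prod_{\pr}(I,1,\gamma)$ computed inside $\coprodcomp{\prodcomp{\mp'}}$ to a simple product computed inside $\prodcomp{\mp'}$, yielding again an object of the form $(J,1,\delta)$ and hence (up to iso) a $\coprod$-quantifier-free object. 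Once this closure is established, the remaining axioms assemble and $\mp$ is a G\"odel fibration.
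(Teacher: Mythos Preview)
Your proposal is correct and follows exactly the approach the paper indicates: the paper's own proof is the single sentence ``Combining Theorem \ref{proposition free-algebras coprod} and Theorem \ref{proposition free-algebras prod}'', and you have faithfully unpacked that combination, including the use of Hofstra's decomposition $\dial{-}\cong\coprodcomp{\prodcomp{-}}$. Your handling of the stability clause (that simple products in $\coprodcomp{\prodcomp{\mp'}}$ send objects of the form $(I,1,\gamma)$ to objects of the same form, essentially because $1^{A}\cong 1$) is a detail the paper leaves implicit but which is indeed needed and which your argument supplies correctly.
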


\begin{remark}
Notice that from a categorical perspective Theorem \ref{theorem godel if and only if Dialectica} provides a characterization of the free-algebras of the pseudo-monad $\dial{-}$.
\end{remark}

\section{On fibred (weak) finite (co)products}
The categorical properties of Dialectica categories were first studied by de Paiva \cite{dePaiva1989Dialectica}, and then by Hyland \cite{HYLAND200243} and Biering \cite{Biering_Dialecticainterpretations}. 
Observe that all of these presentations of the Dialectica construction correspond to the fibre $\dial{\mp}_1$ (associated to an 
appropriate fibration $\mp$) of the Dialectica monad introduced in \cite{hofstra2011}.  
The main goal of this section is to generalize the previous results to the total category $\dial{\mp}$. For this purpose, we take advantage of the decomposition $\dial{-}\cong \coprodcomp{ \prodcomp{-}} $ of the Dialectica monad into the two (dual) constructions, and we reduce the problem to the study of just one of these two.

We first provide the minimal hypotheses that a fibration $\mp$ with fibred weak finite products and coproducts is required to satisfy so that the Dialectica monad preserves these structures. These hypotheses happen to be preserved as well. Then we show that the total category itself $\dial{\mp}$ has finite weak products and coproducts.

\begin{remark}
 Most of the results contained in the current section arise as generalisations of the ones contained in \cite{trottapasquale2020}.
 Let us consider for instance the following statement: \begin{quote}
    Let $\mathcal{C}$ be a category with finite products and let $\fibration{\mathcal{C}^{\op}}{P}{\infsl}$ be a doctrine, where $\infsl$ is the category of inf-semilattices, i.e. finitely complete posets, and inf-preserving maps, i.e. finite limit preserving functors. Then the existential doctrine $\fibration{\mathcal{C}^{\op}}{P^{\existential}}{\pos}$ obtained from the exact completion of $P$ factors through the forgetful functor $\infsl \hookrightarrow \pos$, i.e. $P^{\existential}$ is still a doctrine $\mathcal{C}^{\op}\longrightarrow\infsl$.
\end{quote} whose proof is contained in \cite{trotta2020}. It basically states that the existential completion of a doctrine preserves the existence of finite meets in its \textit{power-sets} (see \cite{trottapasquale2020} and \cite{trotta2020} for more details).
As long as one becomes interested in \textit{proof-relevance}, the notion of doctrine can be generalised, depending on the richness of its power-sets, that is, on its codomain category. For instance, if one is interested in proof-relevant doctrines over a cartesian category $\mathcal{C}$ and whose codomain in principle is $\infsl$, then the right generalised notion of such a doctrine is the one of a functor $\mathcal{C}^{op} \longrightarrow \cartcat$, where $\cartcat$ is the category of categories with finite products and finite product preserving functors. As we prove in the current section, the previous statement generalises into asserting that the proof-relevant existential completion of such a generalised doctrine is still a doctrine whose codomain is $\cartcat$.
However, here we state and prove such a result, as well as the other ones, by using the language of cloven and split fibrations in place of the equivalent one of doctrines. In fact, inside this framework, ordinary doctrines of codomain $\pos$ ($\infsl$ respectively) correspond to poset-fibrations (inf-semilattice-fibrations respectively), i.e. fibrations whose fibers are posets (inf-semilattices respectively). Analogously, generalised doctrines whose generalises power-sets enjoy some categorical property correspond to ordinary cloven and split fibrations whose fibers enjoy the same categorical property.
\end{remark}

\begin{proposition}\label{fibredproductpreservation}
The simple coproduct completion preserves fibred (weak) finite products, i.e. whenever a fibration $\fibration{\mE}{\mp}{\mB}$ has fibred (weak) finite products, then its simple coproduct completion $\fibration{\coprodcomp{\mp}}{\coprodcomp{\mp}}{\mB}$ has fibred (weak) finite products as well. Dually, the simple product completion preserves fibred (weak) finite coproducts.
\end{proposition}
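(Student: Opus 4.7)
The plan is to construct (weak) finite products in each fibre of $\coprodcomp{\mp}$ explicitly, verify the universal property by reducing it to the universal properties in $\mB$ and in the fibres of $\mp$, check stability under reindexing, and then deduce the dual statement from Proposition \ref{proposition coproc is dual to prod}.

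\emph{Construction.} Given $(I,X,\alpha)$ and $(I,Y,\beta)$ in $\coprodcomp{\mp}_I$, define their (weak) product to be $(I,X\x Y,\angbr{\pi_I}{\pi_X}^*\alpha \wedge \angbr{\pi_I}{\pi_Y}^*\beta)$, where $\wedge$ denotes the fibred (weak) binary product in $\mE_{I\x X\x Y}$ and the projections are from $I\x X\x Y$. The fibred terminal is $(I,1,\top_{I\x 1})$, with $\top_{I\x 1}$ the fibred terminal of $\mE_{I\x 1}$. The two projections and the pairing maps of this product will be built out of the projections in $\mB$ together with the projections and pairings coming from the fibred (weak) products in $\mp$.

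\emph{Universal property.} The heart of the argument is the bijection recorded in Remark \ref{reindexing} between morphisms $(I,Z,\gamma)\to (I,X,\alpha)$ in $\coprodcomp{\mp}_I$ and pairs $(\arrow{I\x Z}{f}{X},\;\arrow{\gamma}{\phi}{\angbr{\pi_I}{f}^*\alpha})$. Given two such pairs $(f_1,\phi_1)$ and $(f_2,\phi_2)$ with codomains $(I,X,\alpha)$ and $(I,Y,\beta)$, pair the first components via $\angbr{f_1}{f_2}$ in $\mB$ and the second components via the (weak) product in $\mE_{I\x Z}$. That the vertical pairing lands in the correct object follows from fibredness of $\wedge$ in $\mp$, which ensures the canonical identification $\angbr{\pi_I}{\angbr{f_1}{f_2}}^*(\angbr{\pi_I}{\pi_X}^*\alpha \wedge \angbr{\pi_I}{\pi_Y}^*\beta)\cong \angbr{\pi_I}{f_1}^*\alpha \wedge \angbr{\pi_I}{f_2}^*\beta$. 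Uniqueness (for strict products) then transfers from uniqueness of pairings in $\mB$ and in the fibres of $\mp$; for the weak case, only existence is claimed.

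\emph{Stability under substitution and dual case.} To upgrade these fibrewise products to fibred products in $\coprodcomp{\mp}$, verify that each substitution functor preserves them: using the explicit presentation of reindexing in $\coprodcomp{\mp}$ from Remark \ref{reindexing} together with fibredness of $\wedge$ in $\mp$, one obtains a canonical iso $h^*\big((J,X,\alpha)\x(J,Y,\beta)\big)\cong h^*(J,X,\alpha)\x h^*(J,Y,\beta)$ for every $\arrow{I}{h}{J}$ in $\mB$. The dual claim about $\prodcomp{-}$ follows formally from Proposition \ref{proposition coproc is dual to prod} and Lemma \ref{opfibration}: fibred weak finite coproducts in $\mp$ correspond to fibred weak finite products in $\mp^{\op}$, so $\coprodcomp{\mp^{\op}}$ inherits the latter, and hence $\prodcomp{\mp}\cong\coprodcomp{\mp^{\op}}^{\op}$ has fibred weak finite coproducts. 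The main obstacle will be bookkeeping: tracking the composed reindexings along $\pi_I,\pi_X,\pi_Y$ and checking that the naturality of the cleavage aligns correctly with the product structure in the fibres of $\mp$.
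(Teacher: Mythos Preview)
Your proposal is correct and follows essentially the same approach as the paper: you construct the (weak) product in the fibre $\coprodcomp{\mp}_I$ as $(I,X\times Y,\angbr{\pi_I}{\pi_X}^*\alpha\wedge\angbr{\pi_I}{\pi_Y}^*\beta)$, verify the universal property by pairing separately in $\mB$ and in the fibre of $\mp$, check stability under reindexing using fibredness of $\wedge$, and obtain the dual statement via Proposition~\ref{proposition coproc is dual to prod} and Lemma~\ref{opfibration}. The paper proceeds identically, with only cosmetic differences in notation (it writes $\times$ for the fibred product and names the terminal object $1_I$).
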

\begin{proof}
We only prove the first part of the statement, as the second follows by Lemma \ref{opfibration}, Proposition \ref{proposition coproc is dual to prod} and Proposition \ref{fibredproductpreservation}. Let $I$ be an object of $\mB$ and let us consider two objects $(I,X,\alpha)$ and $(I,Y,\beta)$ of the fibre $\coprodcomp{\mp}_{I}$. Then the triple:
$$(I,X\x Y, \clift{\angbr{\pr_I}{\pr_X}}\alpha\x \clift{\angbr{\pr_I}{\pr_Y}}\beta)$$
is a (weak) product $(I,X,\alpha)\x (I,Y,\beta)$ in $\coprodcomp{\mp}_I$ together with the projections: $$(\pr_X,\pr_{\clift{\angbr{\pr_I}{\pr_X}}\alpha}) \text{ and }(\pr_Y,\pr_{\clift{\angbr{\pr_I}{\pr_Y}}\beta})$$ where the $\pr_I,\pr_X,\pr_Y$ are the projections from $I\x X\x Y$, and $\pr_{\clift{\angbr{\pr_I}{\pr_Y}}\alpha}$, $\pr_{\clift{\angbr{\pr_I}{\pr_Y}}\beta}$ are the projections from $\clift{\angbr{\pr_I}{\pr_X}}\alpha\x \clift{\angbr{\pr_I}{\pr_Y}}\beta$. In fact, whenever $(f,\phi)$ and $(g,\psi)$ are arrows: $$(I,Z,\gamma)\to(I,X,\alpha)\text{ and } (I,Z,\gamma) \to (I,Y,\beta)$$ then the couple: $$\big(\;\langle f,g \rangle,\;\arrow{\gamma}{\langle \phi,\psi\rangle}{\langle \pi_I,f\rangle^*\alpha\times \langle\pi_I,g \rangle^*\beta=\langle\pi_I, \langle f,g\rangle\rangle^*(\langle\pi_I,\pi_X\rangle^*\alpha \times \langle\pi_I,\pi_Y\rangle^*\beta \rangle)\;}\big)$$ is an arrow $(I,Z,\gamma)\to (I,X\times Y,\langle\pi_I,\pi_X\rangle^*\alpha \times \langle\pi_I,\pi_Y\rangle^*\beta \rangle)$ making the usual triangles commute and, if $\clift{\angbr{\pr_I}{\pr_X}}\alpha\x \clift{\angbr{\pr_I}{\pr_Y}}\beta$ is strict, it is the unique one by the universal properties of $X\times Y$ and $\clift{\angbr{\pr_I}{\pr_X}}\alpha\x \clift{\angbr{\pr_I}{\pr_Y}}\beta$ in $\mB$ and $\mE_{I\times X \times Y}$ respectively. Moreover observe that, whenever $\arrow{J}{h}{I}$ is an arrow of $\mB$, it is the case that the $h$-reindexing of $(\;I,X\times Y,\langle\pi_I,\pi_X\rangle^*\alpha \times \langle\pi_I,\pi_Y\rangle^*\beta \rangle)$ is the object: $$\big(J,X\times Y,\langle h\pi_J,\pi_X,\pi_Y\rangle^*(\langle\pi_I,\pi_X\rangle^*\alpha \times \langle\pi_I,\pi_Y\rangle^*\beta \rangle)=\langle h\pi_J,\pi_X \rangle^*\alpha \times \langle h\pi_J,\pi_Y \rangle^*\beta\;\big)$$ which is the (weak) product of the objects $(J,X,\langle h\pi_J,\pi_X \rangle^*\alpha)$ and $(J,Y,\langle h \pi_J,\pi_Y\rangle^*\beta)$ in $\coprodcomp{\mp}_J$.

Finally, a (weak) terminal object of the fibre $\coprodcomp{\mp}_I$ is $(I,1,1_I)$ whenever $1$ is terminal in $\mB$ and its $h$-reindexing is $(J,1,1_J)$.
\end{proof}

By Proposition \ref{fibredproductpreservation} we cannot conclude that the Dialectica fibration $\dial{\mp}$ has either fibred (weak) finite coproducts or products, since $\dial{-}$ is given by the composition of $\coprodcomp{-}$ and $\prodcomp{-}$, and the previous proposition just shows that these two completions preserve different structure.  Therefore we need to provide the right assumptions such that the simple coproduct completion preserves fibred (weak) finite coproducts and, dually, the assumptions such that the simple product completion preserves fibred (weak) finite products.

\begin{proposition}\label{falso!}
Let $\mB$ be a distributive category and let $\fibration{\mE}{\mp}{\mB}$ be a fibration with fibred (weak) finite coproducts such that the cartesian liftings w.r.t. $\mp$ of the injections $\arrow{A}{j_A}{A+B}$ have left adjoints $\coprod_{j_A}$ which satisfy BCC for pullbacks (when they exist) of injections which are injections themselves. Then the fibration $\coprodcomp{\mp}$ has fibred (weak) finite coproducts.

\end{proposition}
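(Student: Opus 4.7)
The plan is to mirror the construction in Proposition \ref{fibredproductpreservation}, but for coproducts, using distributivity of $\mB$ together with the hypothesis on left adjoints along injections. Given $(I,X,\alpha)$ and $(I,Y,\beta)$ in $\coprodcomp{\mp}_I$, I define their candidate coproduct to be $(I,X+Y,\gamma)$, where, exploiting the fact that distributivity makes $\id_I\times j_X$ and $\id_I\times j_Y$ into the injections of the coproduct $I\times (X+Y) \cong (I\times X) + (I\times Y)$ in $\mB$, one sets
$$\gamma := \coprod_{\id_I\times j_X}(\alpha) + \coprod_{\id_I\times j_Y}(\beta)$$
in the fibre $\mE_{I\times (X+Y)}$. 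The existence of the left adjoints is given by hypothesis (the two arrows are injections which are themselves injections), and the fibrewise coproduct exists because $\mp$ has fibred (weak) finite coproducts.

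The injections $(I,X,\alpha)\to (I,X+Y,\gamma)$ and $(I,Y,\beta)\to (I,X+Y,\gamma)$ are constructed by post-composing the units of the adjunctions $\coprod_{\id_I\times j_X}\dashv (\id_I\times j_X)^*$ and $\coprod_{\id_I\times j_Y}\dashv (\id_I\times j_Y)^*$ with the reindexings of the fibrewise coproduct injections into $\gamma$. For the universal property, given arrows $(g_1,\psi_1)$ and $(g_2,\psi_2)$ out of $(I,X,\alpha)$ and $(I,Y,\beta)$ into some $(I,Z,\delta)$, the universal property of $X+Y$ and distributivity yield a unique $\arrow{I\times (X+Y)}{h}{Z}$ compatible with $g_1$ and $g_2$ under the two injections; transposing $\psi_1$ and $\psi_2$ along the respective adjunctions produces morphisms $\coprod_{\id_I\times j_X}(\alpha)\to \langle \pi_I,h\rangle^*\delta$ and $\coprod_{\id_I\times j_Y}(\beta)\to \langle \pi_I,h\rangle^*\delta$ whose fibrewise coproduct factorisation in $\mE_{I\times (X+Y)}$ is the vertical component of the mediating morphism. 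Commutativity of the triangles amounts to the verification that each transpose undoes the corresponding unit; uniqueness in the strong case follows from the uniqueness of the adjoint transpose together with the strong universal properties of coproducts in $\mB$ and in the fibres, and in the weak case the uniqueness clause is simply dropped throughout.

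Stability under reindexing along an arrow $\arrow{J}{h}{I}$ is where the BCC hypothesis enters. Distributivity forces the square with top edge $\id_J\times j_X$, bottom edge $\id_I\times j_X$ and vertical edges $h\times \id_X$ and $h\times \id_{X+Y}$ to be a pullback of injections that are themselves injections, so the BCC hypothesis yields $(h\times \id_{X+Y})^*\coprod_{\id_I\times j_X}(\alpha)\cong \coprod_{\id_J\times j_X}(h\times \id_X)^*(\alpha)$, and analogously for $\beta$. Combining these identifications with preservation of fibrewise coproducts by reindexing (since $\mp$ has fibred coproducts) shows that the reindexing of $\gamma$ along $\langle h\pi_J,\pi_{X+Y}\rangle$ is precisely the $\gamma$ constructed from the reindexings of $\alpha$ and $\beta$, as needed. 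The initial object of $\coprodcomp{\mp}_I$ is $(I,0,0_{I\times 0})$, where $0$ is the initial object of $\mB$ and $0_{I\times 0}$ is the initial object of the fibre $\mE_{I\times 0}$ (which exists because $\mp$ has fibred finite coproducts); both its universal property and its stability under reindexing are immediate, with $\mB$-distributivity ensuring that $J\times 0\cong 0$ so that reindexing on the base is well-behaved.

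The main obstacle will be keeping track of the many reindexings involved in the stability argument and checking that the displayed square has exactly the shape required by the BCC hypothesis (a pullback of injections that are themselves injections), which is precisely what distributivity delivers. Once this is in place and $\gamma$ is defined, the universal property is a routine unwinding of adjoint transpositions combined with the universal properties of coproducts in $\mB$ and in the fibres of $\mp$.
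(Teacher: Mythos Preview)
Your proposal is correct and follows essentially the same approach as the paper. The only cosmetic differences are that you absorb the distributivity isomorphism $\theta\colon (I\times X)+(I\times Y)\to I\times(X+Y)$ into the construction by working directly with $\coprod_{\id_I\times j_X}$ (the paper keeps $\theta$ explicit and uses $\coprod_{j_{I\times X}}$), and that the paper additionally records that the units $\eta_\alpha,\eta_\beta$ are isomorphisms (from BCC together with the fact that coproduct injections in a distributive category are monic); your uniqueness argument via the bijectivity of adjoint transposition does not actually need this extra observation.
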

\begin{proof}
Let $I$ be an object of $\mB$. Then, whenever $0$ is initial in $\mB$, the triple $(I,0,0_I)$ is a (weak) initial object of the fibre $\coprodcomp{\mp}_I$ and, whenever $\arrow{J}{h}{I}$ is an arrow of $\mB$, its $h$-reindexing is $(J,0,0_J)$. Moreover, whenever $(I,X,\alpha)$ and $(I,Y,\beta)$ are objects of $\coprodcomp{\mp}_I$, then the object: $$\Big(I, X+Y, \clift{\big(\theta^{-1}\big)}\bigg(\coprod_{j_{I\x X}}\alpha+ \coprod_{j_{I\x Y}}\beta\bigg)=:\chi \Big)$$ where $\arrow{(I\x X)+(I\x Y)}{\theta}{I\x (X+Y)}$ is the canonical isomorphism, is a (weak) coproduct $(I,X,\alpha)+(I,Y,\beta)$ in $\coprodcomp{\mp}_I$ together with the injections:
$$(j_X\pr_X=\pi_{X+Y}\theta j_{I\times X},\;\ovln{j}_{\alpha}\eta_{\alpha})\text{ and }(j_Y\pr_Y=\pi_{X+Y}\theta j_{I\times Y},\;\ovln{j}_{\beta}\eta_{\beta})$$ where $\ovln{j}_\alpha\eta_\alpha$ and $\ovln{j}_\beta\eta_\beta$ are the compositions: $$\arrow{\alpha}{\eta_\alpha}{j^*_{I\times X}\coprod_{j_{I\times X}}\alpha\overset{\overline{j}_\alpha}}{\hookrightarrow}(\theta j_{I\times X})^*\chi=\langle \pi_I, j_X\pi_X \rangle^*\chi$$ and: $$\arrow{\beta}{\eta_\beta}{j^*_{I\times Y}\coprod_{j_{I\times Y}}\beta\overset{\overline{j}_\beta}}{\hookrightarrow}(\theta j_{I\times Y})^*\chi=\langle \pi_I, j_Y\pi_Y \rangle^*\chi.$$ In fact, whenever: $$(\;f,\;\arrow{\alpha}{\phi}{\langle\pi_I, f\rangle^*\gamma=j_{I\times X}^*\langle \pi_I\theta,[f,g]\rangle^*\gamma}\;) \text{ and } (\;g,\;\arrow{\alpha}{\psi}{\langle\pi_I, g\rangle^*\gamma=j_{I\times Y}^*\langle \pi_I\theta,[f,g]\rangle^*\gamma}\;)$$ are arrows $(I,X,\alpha)\to(I,Z,\gamma) \text{ and }(I,Y,\beta)\to(I,Z,\gamma)$, denoted as $\tilde{\phi}$ and $\tilde{\psi}$ the transposed of $\phi$ and $\psi$ respectively along the adjunctions $\coprod_{I\times X}\dashv j_{I\times X}^*$ and $\coprod_{I\times Y}\dashv j_{I\times Y}^*$ respectively, then the couple: $$\big(\; [f,g]\theta^{-1},\; \chi \xrightarrow{(\theta^{-1})^*\big[\tilde{\phi},\tilde{\psi}\big]}\langle \pi_I,\langle f,g \rangle\theta^{-1}\rangle^*\gamma \;\big)$$ is an arrow $(I,X\times Y,\chi) \to (I,Z,\gamma)$ making the usual triangles commute. If $\chi$ is strict, then it is the unique one by the universal properties of $X + Y$ and $\chi$ in $\mB$ and $\mE_{I\times (X+Y)}$ respectively and because $\theta$, $\eta_\alpha$ and $\eta_\beta$ are isomorphisms. Observe indeed that the unit of the adjunction $\coprod_{I\times X}\dashv j_{I\times X}^*$ needs to be a natural isomorphism by BCC and because the injection $j_{I\times X}$ is a monomorphism, being $\beta$ distributive - the same holds for the unit of $\coprod_{I\times Y}\dashv j_{I\times Y}^*$.

Whenever $\arrow{J}{h}{I}$ is an arrow of $\mB$, the $h$-reindexing of $(I,0,0_I)$ is $(J,0,0_J)$ and the $h$-reindexing of $(I,X+Y,\chi)$ is the triple $(J,X+Y,\langle h\pi_J,\pi_{X+Y} \rangle^*\chi)$ and the following equalities hold: $$\begin{aligned} \langle h\pi_J,\pi_{X+Y} \rangle^*\chi&=(\theta^{-1})^*[\langle h\pi_J,\pi_X\rangle,\langle h\pi_J,\pi_Y \rangle]^*\bigg(\coprod_{j_{I\x X}}\alpha+ \coprod_{j_{I\x Y}}\beta\bigg) \\ &= (\theta^{-1})^*\bigg(\coprod_{j_{J\x X}}\langle h\pi_J,\pi_X\rangle^*\alpha+ \coprod_{j_{J\x Y}}\langle h\pi_J,\pi_Y\rangle^*\beta\bigg)
\end{aligned}$$ by BCC and being the squares: $$\xymatrix{J\times X \ar[d]_{\langle h\pi_J,\pi_X \rangle} \ar[r]^>>>>>{j_{J\times X}} & J\times X + J \times Y \ar[d] && J\times Y \ar[d]_{\langle h\pi_J,\pi_Y \rangle} \ar[r]^>>>>>{j_{J\times Y}} & J\times X + J \times Y \ar[d] \\ I \times X \ar[r]^>>>>>{j_{I\times X}} & I \times X + I \times Y && I \times Y \ar[r]^>>>>>{j_{I\times Y}} & I \times X + I \times Y}$$
pullbacks whose right-hand vertical arrow is the morphism $[\langle h\pi_J,\pi_X \rangle,\langle h\pi_J,\pi_Y \rangle]$. Hence the triple $(J,X+Y,\langle h\pi_J,\pi_{X+Y} \rangle^*\chi)$ is the (weak) coproduct of the objects: $$(J,X,\langle h\pi_J,\pi_X \rangle^*\alpha)\text{ and }(J,Y,\langle h \pi_J,\pi_Y\rangle^*\beta)$$ in $\coprodcomp{\mp}_J$. We conclude that the $h$-reindexing w.r.t. $\coprodcomp{\mp}$ preserves (weak) finite coproducts.
\end{proof}

The proof-irrelevant simple coproduct completion of poset-fibrations preserves the existence of left adjoints to the cartesian lifting whenever the base category has points. That is, whenever a poset-fibration over a category with points has left adjoints to its cartesian liftings of the injections, then its proof-irrelevant simple coproduct completion keeps on having left adjoints to its cartesian liftings of the injections (see \cite{trottapasquale2020} for a proof of this in the equivalent language of doctrines). However, this property is not enjoyed by the proof-relevant simple coproduct completion in general, as the example  below shows.

Thus a dual version of Proposition \ref{falso!} together with Proposition \ref{fibredproductpreservation} would allow us to conclude that $\dial{\mp}$ has fibred (weak) finite products, but the failure of the preservation of adjoints along injections prevents us from concluding that $\dial{\mp}$ has fibred (weak) finite coproduct. 

\begin{example}\label{esempio stupido}
Let us assume that there is a choice of an element $a$ of $A$ for every nonempty set $A$ and let us consider the subobject-fibration over $\set$. This is the fibration $\mp \colon \alexandre{\set}\to \set$ given by the projection on the first component, where $\alexandre{\set}$ is the Grothendieck category of $\set$, whose objects are the pairs $(A,\alpha)$, where $A$ is a set and $\alpha$ is a subset of $A$, and whose arrows $(A,\alpha)\to (B,\beta)$ are the arrows $\arrow{A}{f}{B}$ such that $\alpha\subseteq f^*\beta$. Observe that $\set$ has points and that, whenever $\arrow{A}{j_A}{A+B}$ is an injection in $\set$, then the reindexing $j_A^*$ has a left adjoing $\coprod_{j_A}$ given by the assignment $(A,\alpha\subseteq A) \mapsto (A+B,\alpha\subseteq A+B)$ and BCC 
for pullbacks is preserved. However, this is not the case with respect to the fibration $\coprodcomp{\mp}\to\set$.

Let us consider the objects $(A,C,\gamma)$ and $(A+B,D,\delta)$ of $\coprodcomp{\mp}_A$ and $\coprodcomp{\mp}_{A+B}$ respectively. Then the homset $\coprodcomp{\mp}_{A+B}(\coprod_{j_A}(A,C,\gamma),(A+B,D,\delta))$ is the set of arrows $\arrow{(A+B)\times C}{g}{D}$ such that $\gamma \subset \langle\pi_{A+B}, g\rangle^*\delta$, while the homset $\coprodcomp{\mp}_{A}((A,C,\gamma),j_A^*(A+B,D,\delta))$ is the set of arrows $A \times C\xrightarrow{h} D$ such that $\gamma \subset \langle \pi_A,h\rangle^* j_{A\times D}^*\delta=\langle \pi_A,h\rangle^*((A\times D)\cap \delta)$. Then the usual assignments: $$\begin{aligned} g \overset{\sharp}{\mapsto} \big(\;\arrow{A\times C}{gj_{A\times C}}{D}\;\big) \text{ and } h \overset{\flat}{\mapsto} \big(\;{A\times C + B \times C}&\overset{[h,d]}{\to}{D}\;\;\;\;\;\;\;\big) \\ (a,c)&\;\mapsto\; h(a,c) \\ (b,c)&\; \mapsto\; d \end{aligned}$$ (here $d$ is the element of $D$ in our choice) in general do not define a bijection of these two homsets, but a retraction only: it is indeed the case that the latter is a retract of the former, i.e. $\sharp$ is a retraction of $\flat$.

The pair of functors $\coprod_{j_A} \radjunction j_A^*$ is an example of the notion of \textit{right-weak adjunction}, see Appendix \ref{weak adjunctions}.

Similar reasoning can be done for the preservation of the right adjoints to the reindexing with respect to $\mp$ along the injections, which we know exist. Again, if we denote by $\prod_{j_A}$ the usual candidate for the right adjoint of the $j_A$-reindexing with respect to $\coprodcomp{\mp}$, then we realise that, between the homsets: $$\coprodcomp{\mp}_{A}(j_A^*(A+B,D,\delta),(A,C,\gamma))\text{ and } \coprodcomp{\mp}_{A+B}((A+B,D,\delta),\prod_{j_A}(A,C,\gamma))$$ in general there is not a natural bijection, but just a natural retraction. That is, the former is a retract of the latter, and this retraction is natural. This example motivates the notion of left-weak adjunction (see \ref{left-weak adjunction} in the Appendix), a dual notion to Definition \ref{right-weak adjunction}.
\end{example}

The notions of right-weak and left-weak adjunction appear in a similar context in \cite{moss2018}. It turns out that the property of existence of right-weakly left adjoints to the cartesian liftings of the injections is enough to make the simple coproduct completion of fibrations preserve the existence of fibred weak finite coproducts. As usual, the dual result holds for the simple product completion together with the fibred weak finite products. Hence, we introduce the notion of an \emph{extendable fibration}, which explains those categorical structures which are compatible, i.e. preserved, by the Dialectica interpretation.

\begin{definition}\label{definition godel fibration}
A cloven, split fibration $\fibration{\mE}{\mp}{\mB}$ is said to be an \bemph{extendable fibration} if it satisfies the following properties:
\begin{enumerate}
    \item[1)]  the category $\mB$ is distributive and has \textit{points}, i.e. there is an arrow $\arrow{1}{a}{A}$ for every non-initial object $A$ of $\mB$;
    \item[2)] the fibration $\mp$ has fibred weak finite products and fibred weak finite coproducts;
    \item[3)] the cartesian liftings with respect to $\mp$ of the injections $\arrow{A}{j_A}{A+B}$ have right-weakly left adjoints $\coprod_{j_A}$ which satisfy BCC for pullbacks (when they exist) of injections which are injections themselves;
    \item[4)] the cartesian liftings with respect to $\mp$ of the injections $\arrow{A}{j_A}{A+B}$ have left-weakly right adjoints $\prod_{j_A}$ which satisfy BCC for pullbacks (when they exist) of injections which are injections themselves.
\end{enumerate}
\end{definition}

\begin{remark}
If $\fibration{\mE}{\mp}{\mB}$ is a extendable fibration the total category $\mE$ has weak products, but it does not have weak coproducts, in general. The main reason is that to construct the coproducts in $\mE$ we need to have left adjoints along all reindexing functors, i.e. $\mp$ has to be a bifibration in general, see \cite{Jacobs1999}.
\end{remark}


\begin{example}
 Consider the initial cartesian closed category $\mathbf{T}$ with a natural number object and coproducts.  This category may be built, for example,  using the syntax of System T (possibly adding sums). Recall from \cite{maietti2010} that one can define a \emph{predicate} of $\mathbf{T}$ as a morphism $\arrow{N}{P}{N}$ such that $P\ast P= P$, where $\arrow{N\x N}{\ast}{N}$ is the multiplication of natural numbers. In essence, a predicate is a morphism with values 0 or 1. In particular, the collection of predicates forms a boolean algebra, see \cite{maietti2010}, with  bounded  existential  and  universal  quantification  where  the  order  is  defined as follows:$P\leq Q \mbox{ if and only if } P\dot-Q=0$ where $\dot-$ denotes truncated  subtraction. Therefore, we can consider the functor $\skoledoc$, where $\mathbin{Pred}(N)$ is the poset of predicates, and $\mathbin{Pred}_h(P)=Ph$ for every arrow $h$ of $\mathbf{T}$ and $P\in \mathbin{Pred}(N)$.
Notice that, if we consider a predicate $P\in \mathbin{Pred}(N)$, there are two natural ways to extend this predicate to a predicate of $\mathbin{Pred}(N+ N)$.
The idea is the following: consider the particular case a function $\arrow{\mathbb{ N}}{P}{\mathbb{N}}$  such that $P$ has values $0$ or $1$. Then we can define two functions
from $\arrow{\mathbb{N} + \mathbb{N}}{}{\{0,1\}}$: $P_1$ such that $P_1(a,1)=P(a)$ and $P_1(a,0)=0$ and $P_2$ such that $P_2(a,1)=P(a)$ and $P_2(a,0)=1$. Moreover, one can see that if $\mathbin{Pred}_{j_1}(G)=G j_1\leq P$, where $\arrow{\mathbb{N}+\mathbb{N}}{G}{\{0,1\}}$ is a predicate, then $G\leq P_2$, and $P\leq \mathbin{Pred}_{j_1}(G)$ implies $P_1\leq G$. 
 Formally, these assignments are exactly the left and right adjoints to the functor $\mathbin{Pred}_{j_1}$, and more generally, the existence of left and right adjoints along injections can be proved for the arbitrary case $\skoledoc$. Hence the fibration $ \arrow{\int \mathbin{Pred}}{}{\mathbf{T}}$ corresponding to the functor $\skoledoc$ is an extendable fibration.
\end{example}

\begin{example}\label{esempio stupido 2}
The sub-object fibration $\mp \colon \alexandre{\set}\to \set$ on $\set$ is an extendable fibration.
\end{example}

\begin{example}\label{generalization esempio stupido}
The sub-object fibration over a lextensive category, see  \cite{CARBONI1993}, with points is an extendable fibration. This is the fibration $\alexandre{\mathcal{C}}\to \mathcal{C}$,where $\alexandre{\mathcal{C}}$ is the Grothendieck category over $\mathcal{C}$. Let $A,B$ be objects of $\mathcal{C}$ and let $j_A$ be the monic injection $A \to A+B$. Observe that the reindexing along $j_A$ has a left adjoint $\coprod_{j_A}$ which acts as the post-composition by $j_A$: whenever $(A,s)$ is an object of $(\alexandre{\mathcal{C}})_A$, i.e. $s$ is a subobject of $A$, it is the case that $\coprod_{j_A}(A,s)=(A+B,j_A s)$.

Moreover, it has a right adjoint $\prod_{j_A}$, sending an object $(A,\arrow{S}{s}{A})$ of $(\alexandre{\mathcal{C}})_A$ to the couple: $$(A+B,\arrow{S + B}{s + 1_B}{A + B}).$$ Observe that $s+1_B$ is indeed a mono: if $a$ be an arrow $X \to S + B$ then $X$ has a structure of coproduct together with the injections $\arrow{X_S}{a^*i_S}{X}$ and $\arrow{X_B}{a^*i_B}{X}$ obtained by pulling back along $a$ the injections: $$\arrow{S}{i_S}{S+B}\text{ and }\arrow{B}{i_B}{S+B}$$ respectively. Let us denote as $a_1$ and $a_2$ the unique arrows $X_S \to S$ and $X_B \to B$ respectively such that $a =a_1 + a_2$. Observe that the injections $\arrow{S}{i_S}{S+B}$ and $\arrow{B}{i_B}{S+B}$ are the pullbacks along $s + 1_B$ of the arrows $j_A$ and $j_B$, hence $a^*i_S$ and $a^*i_B$ are the pullbacks of $j_A$ and $j_B$ alons $(s+1_B)a$. This implies that, whenever $b$ is another arrow $X \to S + B$ such that $(s+1_B)a=(s+1_B)b$ then, by appliying the same procedure to $b$, we obtain the same coproduct structure $(a^*i_S,a^*i_B)$ over $X$. In particular $j_A s a_1 = (s+1_B)a(a^*i_S)=(s+1_B)b(a^*i_S)=j_A s b_1$, which implies that $a_1=b_1$, as $j_As$ is a monomorphism. Analogously $a_2=b_2$, hence $a=b$ and $s + 1_B$ is proven to be a monomorphism.

Let $\arrow{S}{s}{A}$ be a subobject of $A$ and let $\arrow{T}{t}{A+B}$ be a subobject of $A+B$. Then the conditions $(A,j_A^*t)={j_A}^*(A+B,t)\leq (A,s)$ and $(A+B,t) \leq \prod_{j_A}(A,s)=(A+B,s+1_B)$ are equivalent: if the latter holds (i.e. $t \subseteq s+ 1_B$) then a mono witnessing the former (i.e. $j^*_At \subseteq s$) exists by the universal property of the pullback; viceversa, if the former holds for a mono $\arrow{{j_A}^*T}{m}{S}$ then $\arrow{T}{m+1_B}{S+B}$ witnesses the latter.
\end{example}

The previous examples highlight the fact that the existence of (weak) left and right adjoints over  reindexing, with respect to a given fibration, along injections is a natural requeriment: it is true in many concrete instances enjoying a satisfying ``power-set" algebra. For a further evidence of the generality of our hypotheses see Proposition \ref{ghesboro} below.

\begin{theorem}\label{summary preservations simple coproduct completion}
If $\fibration{\mE}{\mp}{\mB}$ is an extendable fibration, then $\coprodcomp{\mp}$ is an extendable fibration (with simple coproducts). Moreover, $\coprodcomp{\mp}$ satisfies the following:
\begin{enumerate}
     \item[$2.a)$] the total category $\coprodcomp{\mp}$ has both weak finite products and coproducts;
    \item[$2.b)$] if $\mp$ has fibred finite products, then $\coprodcomp{\mp}$ has fibred finite products and, if $\mp$ has fibred finite coproducts and left adjoints to the cartesian liftings along the injections, then $\coprodcomp{\mp}$ has fibred finite coproducts.
\end{enumerate}

\end{theorem}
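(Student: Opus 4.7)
The plan is to verify each clause of Definition \ref{definition godel fibration} for $\coprodcomp{\mp}$, together with the simple-coproduct and total-category claims, leveraging the preservation results already proved in this section. Condition (1) of Definition \ref{definition godel fibration} (distributivity and points of the base) is inherited verbatim, since $\coprodcomp{\mp}$ lives over the same base $\mB$ as $\mp$. Condition (2) (fibred weak finite products and coproducts in $\coprodcomp{\mp}$) follows by combining Proposition \ref{fibredproductpreservation}, which directly yields fibred weak finite products, with a \emph{weak} variant of Proposition \ref{falso!}: the construction of the coproduct object $(I, X+Y, \chi)$ carried out in the proof of Proposition \ref{falso!} uses $\coprod_{j_{I\times X}}$ and $\coprod_{j_{I\times Y}}$ inside $\mp$, and if these are only right-weakly left adjoints (as is the case in an extendable fibration), then the same formula produces a fibred \emph{weak} coproduct in $\coprodcomp{\mp}$, as the usual bijection degenerates to a natural retraction. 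Simple coproducts for $\coprodcomp{\mp}$ come directly from Remark \ref{coprodcomp has simple coprod}.

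For conditions (3) and (4), I would generalize the computation sketched in Example \ref{esempio stupido}. Given an injection $\arrow{A}{j_A}{A+B}$, define the candidate right-weakly left adjoint of $j_A^{\ast}$ in $\coprodcomp{\mp}$ by the assignment $(A,C,\gamma) \mapsto (A+B, C, \coprod_{\langle j_A\pi_A, \pi_C \rangle}\gamma)$, where the inner $\coprod$ is the right-weakly left adjoint supplied by $\mp$. The required natural retraction between the hom-sets
$$\coprodcomp{\mp}_{A+B}\bigl(\coprod_{j_A}^{\coprodcomp{\mp}}(A,C,\gamma),\,(A+B,D,\delta)\bigr) \;\text{ and }\; \coprodcomp{\mp}_A\bigl((A,C,\gamma),\, j_A^{\ast}(A+B,D,\delta)\bigr)$$
is constructed exactly as the $(\sharp,\flat)$-pair in Example \ref{esempio stupido}: $\sharp$ precomposes with the relevant coproduct injection, whereas $\flat$ copairs with a chosen point of the second summand (whose existence is guaranteed by condition (1)). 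Naturality of the retraction and the right-weak form of BCC are then inherited from the corresponding properties of $\mp$. Condition (4) is the formal dual, which I would obtain by invoking Lemma \ref{opfibration} together with Proposition \ref{proposition coproc is dual to prod} to transport the argument through the fibrewise-opposite.

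For (2.a), I would use the standard fibrational reasoning: since $\mB$ has finite products and finite coproducts (being distributive with points), a weak product of $(I,X,\alpha)$ and $(J,Y,\beta)$ in the total category $\coprodcomp{\mp}$ is constructed as the fibred weak product in $\coprodcomp{\mp}_{I\times J}$ of the weakenings $\pi_I^{\ast}(I,X,\alpha)$ and $\pi_J^{\ast}(J,Y,\beta)$, and dually a weak coproduct is the fibred weak coproduct in $\coprodcomp{\mp}_{I+J}$ of $\coprod_{j_I}^{\coprodcomp{\mp}}(I,X,\alpha)$ and $\coprod_{j_J}^{\coprodcomp{\mp}}(J,Y,\beta)$, which exist by the fibred weak finite (co)product just established and the right-weakly left adjoints of part (3). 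Claim (2.b) is then obtained by inspecting when these constructions collapse to strict ones: Proposition \ref{fibredproductpreservation} directly delivers fibred finite products in $\coprodcomp{\mp}$ when $\mp$ has strict fibred finite products, and Proposition \ref{falso!} applies verbatim to give strict fibred finite coproducts in $\coprodcomp{\mp}$ as soon as $\mp$ has both strict fibred finite coproducts and strict left adjoints along injections.

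The main technical obstacle I expect to be in condition (3): verifying that the candidate functor is genuinely a right-weakly left adjoint requires a careful definition of the hom-set retraction that weaves together the retraction data in $\mp$ with a chosen point of the relevant summand, together with checking naturality and the right-weak form of BCC for pullback squares of injections along injections. Once this technical verification is in place, condition (4) follows by duality, and the total-category statements in (2.a) and the strict refinements in (2.b) are essentially formal consequences.
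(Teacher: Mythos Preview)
Your plan for parts (1), (2), (3), (2.a) and (2.b) matches the paper's proof essentially line for line: the same weak-coproduct formula $(I,X+Y,\chi)$ from Proposition~\ref{falso!} is re-used with right-weak adjunctions, the candidate $\coprod_{j_A}^{\coprodsym}$ is (modulo the distributivity iso $\theta$) the paper's $(A+B,C,(\theta^{-1})^*\coprod_{j_{A\times C}}\gamma)$, and the total-category weak (co)products are built exactly as you describe.

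The gap is your treatment of condition (4). The duality you invoke, Lemma~\ref{opfibration} together with Proposition~\ref{proposition coproc is dual to prod}, relates $\coprodcomp{\mq}$ to $\prodcomp{\mq^{\op}}$. If you feed the statement ``$\coprodcomp{-}$ preserves condition (3)'' through this duality you obtain ``$\prodcomp{-}$ preserves condition (4)'', not ``$\coprodcomp{-}$ preserves condition (4)''. Concretely: condition (4) for $\coprodcomp{\mp}$ is equivalent, via the fibrewise opposite, to condition (3) for $(\coprodcomp{\mp})^{\op}\cong \prodcomp{\mp^{\op}}$; but you have only established condition (3) for fibrations of the form $\coprodcomp{-}$, and applying that result to $\mp^{\op}$ gives information about $\coprodcomp{\mp^{\op}}\cong(\prodcomp{\mp})^{\op}$, which is the wrong fibration. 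There is no formal self-duality of $\coprodcomp{-}$ that swaps conditions (3) and (4).

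The paper therefore does \emph{not} obtain (4) by duality: it gives a direct construction $\prod_{j_A}^{\coprodsym}(A,C,\gamma):=(A+B,C,(\theta^{-1})^*\prod_{j_{A\times C}}\gamma)$ and builds the $\sharp^{\coprodsym}/\flat^{\coprodsym}$ maps by hand, using the left-weak adjunctions $j^*\ladjunction\prod_j$ in $\mp$, the counits $\varepsilon$, a chosen point $c$ of $C$, and BCC for pullback squares of injections. This verification is of comparable length to the one for (3) and is where Proposition~\ref{weak counit equality} is actually used; you will need to carry it out explicitly rather than appeal to duality.
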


\begin{proof}
First, let us verify $\coprodcomp{\mp}$ to be an extendable fibration, i.e. that it satisfies points 1), 2), 3) and 4) of Definition \ref{definition godel fibration}.

\medskip

$1)$ The base category of a fibration is not changed by the coproduct completion, hence the first point is trivial.

\medskip

$2)$ The preservation of fibred weak finite products follows by Proposition \ref{fibredproductpreservation}, while the proof that $\coprodcomp{\mp}$ has fibred weak finite coproducts looks like the proof of Proposition \ref{falso!} and the weak finite coproducts have the same presentation. However we need to observe that the properties of the adjunctions $\coprod_{j}\dashv j^*$ that we use there are enjoyed by the corresponding right-weak adjunctions $\coprod_{j}\radjunction j^*$, whose existence we have required in the statement of the current result, as well. In other words, we show that the properties of the right-weak adjunction $\coprod_{j}\radjunction j^*$ are enough to completely retrace the proof of Proposition \ref{falso!}.

If $I$ is an object of $\mB$ and $(I,X,\alpha)$ and $(I,Y,\beta)$ are objects of $\coprodcomp{\mp}_I$, then we claim that:\textit{ the object: $$\Big(I, X+Y, \clift{\big(\theta^{-1}\big)}\bigg(\coprod_{j_{I\x X}}\alpha+ \coprod_{j_{I\x Y}}\beta\bigg)=:\chi \Big)$$ where $\arrow{(I\x X)+(I\x Y)}{\theta}{I\x (X+Y)}$ is the canonical isomorphism, is a weak coproduct $(I,X,\alpha)+(I,Y,\beta)$ in $\coprodcomp{\mp}_I$ together with the injections:
$$(j_X\pr_X=\pi_{X+Y}\theta j_{I\times X},\;\ovln{j}_{\alpha}\eta_{\alpha})\text{ and }(j_Y\pr_Y=\pi_{X+Y}\theta j_{I\times Y},\;\ovln{j}_{\beta}\eta_{\beta})$$ where $\ovln{j}_\alpha\eta_\alpha$ and $\ovln{j}_\beta\eta_\beta$ are the compositions: $$\arrow{\alpha}{\eta_\alpha}{j^*_{I\times X}\coprod_{j_{I\times X}}\alpha\overset{\overline{j}_\alpha}}{\hookrightarrow}(\theta j_{I\times X})^*\chi=\langle \pi_I, j_X\pi_X \rangle^*\chi=(\theta j_{I\times X})^*\chi$$ and: $$\arrow{\beta}{\eta_\beta}{j^*_{I\times Y}\coprod_{j_{I\times Y}}\beta\overset{\overline{j}_\beta}}{\hookrightarrow}(\theta j_{I\times Y})^*\chi=\langle \pi_I, j_Y\pi_Y \rangle^*\chi=(\theta j_{I\times Y})^*\chi.$$ Here $\eta_\alpha$ and $\eta_\beta$ are components of the units of the right-weak adjunctions: $$\coprod_{j_{I\times X}}\radjunction j_{I\times X}^*\text{ and }\coprod_{j_{I\times Y}}\radjunction j_{I\times Y}^*$$ respectively (see Appendix \ref{weak adjunctions}). }

In order to prove this claim, let us assume that the couples: $$(\;f,\;\arrow{\alpha}{\phi}{\langle\pi_I, f\rangle^*\gamma=j_{I\times X}^*\langle \pi_I\theta,[f,g]\rangle^*\gamma}\;) \text{ and } (\;g,\;\arrow{\alpha}{\psi}{\langle\pi_I, g\rangle^*\gamma=j_{I\times Y}^*\langle \pi_I\theta,[f,g]\rangle^*\gamma}\;)$$ are arrows $(I,X,\alpha)\to(I,Z,\gamma) \text{ and }(I,Y,\beta)\to(I,Z,\gamma)$ and let us consider the $\sharp$-transposes $\phi^\sharp$ and $\psi^\sharp$ of $\phi$ and $\psi$ respectively along the right-weak adjunctions: $$\coprod_{I\times X}\radjunction j_{I\times X}^*\text{ and }\coprod_{I\times Y}\radjunction j_{I\times Y}^*$$ respectively. Then the couple: $$\big(\; [f,g]\theta^{-1},\; \chi \xrightarrow{(\theta^{-1})^*[\phi^\sharp,\psi^\sharp]}\langle \pi_I,[ f,g ]\theta^{-1}\rangle^*\gamma \;\big)$$ is an arrow $(I,X\times Y,\chi) \to (I,Z,\gamma)$ making the usual triangles commute. In fact it is the case that: $$\begin{aligned} ([f,g]\theta^{-1}, (\theta^{-1})^*[\phi^\sharp,\psi^\sharp])(j_X\pi_X,\ovln{j_\alpha}\eta_\alpha)&=([f,g]\theta^{-1}\theta j_{I\times X},((\theta j_{I\times X})^*(\theta^{-1})^*[\phi^\sharp,\psi^\sharp])\ovln{j_\alpha}\eta_\alpha) \\ &=(f,(j_{I\times X}^*\phi^\sharp)\eta_\alpha) \\ [\text{Proposition \ref{weak unit equality}}]&= (f,\phi)\end{aligned}$$ and analogously $([f,g]\theta^{-1}, (\theta^{-1})^*[\phi^\sharp,\psi^\sharp])(j_Y\pi_Y,\ovln{j_\beta}\eta_\beta)=(g,\psi)$.

The facts that the weak initial object of $\coprodcomp{\mp}_I$ exists and that weak finite coproducts of $\coprodcomp{\mp}_I$ are stable under $h$-reindexing, where $h$ is an arrow of $\mB$ of target $I$, follow as in Proposition $\ref{falso!}$.
\medskip

$3)$ Let us consider an object $(A,C,\gamma)$ of $\coprodcomp{\mp}_A$ and let $\arrow{A}{j_A}{A+B}$ be an injection. We define the object $\coprod^\coprodsym_{j_A}(A,C,\gamma)$ as the triple $(A+B,C,(\theta^{-1})^*\coprod_{j_{A\times C}}\gamma)$. We need to prove that the functor $\coprod^\coprodsym_{j_A}$ is right-weakly left adjoint to the $j_A$-reindexing w.r.t. $\coprodcomp{\mp}$. Let $(A+B,D,\delta)$ be an object of $\coprodcomp{\mp}_{A+B}$. Then its $j_A$-reindexing is the triple $(A,D,\langle j_A\pi_A,\pi_D\rangle^*\delta)$. Whenever $\big(\;f, \; (\theta^{-1})^*\coprod_{j_{A\times C}}\gamma \xrightarrow{\phi}\langle \pi_{A+B},f\rangle^*\delta\;\big)$ is an arrow: $$(A+B,C,(\theta^{-1})^*\coprod_{j_{A\times C}}\gamma)\to(A+B,D,\delta)$$ we define $\flat^{\coprodsym} (f,\phi)$ to be the arrow $(f\theta j_{A\times C}, (\theta^*\phi)^\flat)$, which is indeed an arrow: $$(A,C,\gamma)\to(A,D,\langle j_A\pi_A,\pi_D\rangle^*\delta).$$ Viceversa, whenever $\big(\;g,\; \gamma \xrightarrow{\psi}\langle \pi_A,g\rangle^*\langle j_A\pi_A,\pi_D \rangle^*\delta \;\big)$ is an arrow: $$(A,C,\gamma)\to(A,D,\langle j_A\pi_A,\pi_D\rangle^*\delta)$$ we define $\sharp^{\coprodsym}(g,\psi)$ to be the arrow $([g,d!]\theta^{-1},(\theta^{-1})^*\psi^\sharp)$, which is indeed an arrow: $$(A+B,C,(\theta^{-1})^*\coprod_{j_{A\times C}}\gamma)\to(A+B,D,\delta).$$ It is the case that: $$\begin{aligned}\flat^{\coprodsym}\sharp^{\coprodsym}(g,\psi)&=\flat^{\coprodsym}([g,d!]\theta^{-1},(\theta^{-1})^*\psi^\sharp) \\ &=([g,d!]\theta^{-1}\theta j_{A\times C},(\theta^*(\theta^{-1})^*\psi^\sharp)^\flat) \\ [\text{since }\flat\text{ is a retraction of }\sharp]&= (g,\psi)
\end{aligned}$$ hence $\coprod_{j_A}^\coprodsym$ is right-weakly left adjoint to the $j_A$-reindexing w.r.t. $\coprodcomp{\mp}$ and we are done. We are left to verify that BCC is satisfied.

Let us assume that the square: $$\xymatrix{C \ar[r]^{j_C} \ar[d]_f & C+D \ar[d]^{g} \\ A \ar[r]_{j_A} & A+B }$$ is a pullback and let $(A,E,\epsilon)$ be an object of $\coprodcomp{\mp}_A$. We are left to prove that: the objects $\coprod^\coprodsym_{j_C}f^*(A,E,\epsilon)=(C+D,E,(\theta^{-1})^*\coprod_{j_{C\times E}}(f\times 1_E)^*\epsilon)$ and $g^*\coprod^\coprodsym_{j_A}(A,E,\epsilon)=(C+D,E,(g\times 1_E)^*(\varphi^{-1})^*\coprod_{j_{A\times E}}\epsilon)$ are equal, where $\theta$ and $\varphi$ are the isomorphisms $C\times E + D\times E \to (C+ D)\times E$ and $A\times E + B\times E \to (A+B)\times E$ respectively. Let us consider the following commutative diagram: $$\xymatrix@-0.7pc{C\times E \ar[dd]_{f\times 1_E} \ar[rr]^>>>>>>>>{j_{C\times E}} && C\times E + D\times E \ar[dd]_{\varphi^{-1}(g\times 1_E)\theta} && (C+D)\times E \ar[dd]^{g\times 1_E} \ar[ll]_{\theta^{-1}} \ar[rr]^>>>>>>>>{\pr_{C+D}} && C+D \ar[dd]^{g} \\ \\  A\times E \ar[rr]_>>>>>>>>{j_{A\times E}} && A\times E + B\times E && (A+B)\times E \ar[ll]^{\varphi^{-1}} \ar[rr]_>>>>>>>>{\pr_{A+B}} && A+B}$$ which is a pullback, since its horizontal arrow $A\times E \to A + B$ equals the arrow $j_A\pr_A$, the arrow $f\times 1_E$ is the pullback of $f$ along $\pr_A$ and $f$ is the pullback of $g$ along $j_A$ (hence $f\times 1_E$ is indeed the pullback of $g$ along $j_A\pr_A$). As the right-hand square is pullback, we deduce that the left-hand square is a pullback as well. Therefore, by BBC for the right-weak adjunctions $\coprod_{j}\radjunction j^*$, it is the case that: $$\begin{aligned} (\theta^{-1})^*\coprod_{j_{C\times E}}(f\times 1_E)^*\epsilon&= (\theta^{-1})^*(\varphi^{-1}(g\times 1_E)\theta)^*\coprod_{j_{A\times E}}\epsilon \\ &=(g\times 1_E)^*(\varphi^{-1})^*\coprod_{j_{A\times E}}\epsilon \end{aligned}$$ and we are done.

\medskip
$4)$ As usual (see \cite{trottapasquale2020}), we define $\prod_{j_A}^\coprodsym(A,C,\gamma)$ to be the object: $$(A+B,C,(\theta^{-1})^*\prod_{j_{A\times C}}\gamma)$$ of $\coprodcomp{\mp}$ whenever $(A,C,\gamma)$ is an object of $\coprodcomp{\mp}_A$ and $\arrow{A}{j_A}{A+B}$ is an injection. Whenever $(A+B,D,\delta)$ is an object of $\coprodcomp{\mp}_{A+B}$ and the pair $(g,\; \arrow{\delta}{\phi}{\langle \pi_{A+B},g\rangle^*(\theta^{-1})^*\prod_{j_{A\times C}}\gamma})$ is an arrow $(A+B,D,\delta) \to (A+B,C,(\theta^{-1})^*\prod_{j_{A\times C}}\gamma)$, we define the arrow $\sharp^\coprodsym(g,\phi)$ to be the couple: $$(g\langle j_A\pi_A,\pi_D\rangle, \varepsilon_{\langle\pi_A,g\langle j_A\pi_A,\pi_D\rangle \rangle^*\gamma}\langle j_A\pi_A,\pi_D\rangle^*\phi)$$which is actually an arrow $(A,D,\langle j_A\pi_A,\pi_D \rangle^*\delta)\to(A,C,\gamma)$, whose second component is the composition of the arrows: \begin{itemize}
    \item $\langle j_A\pi_A,\pi_D\rangle^*\delta \xrightarrow{\langle j_A\pi_A,\pi_D\rangle^*\phi}\langle j_A\pi_A,\pi_D\rangle^*\langle\pi_{A+B},g \rangle^*(\theta^{-1})^*\prod_{j_{A\times C}}\gamma$
    \item $\langle j_A\pi_A,\pi_D\rangle^*\langle\pi_{A+B},g \rangle^*(\theta^{-1})^*\prod_{j_{A\times C}}\gamma=\langle \pi_A,g\langle j_A\pi_A,\pi_D\rangle\rangle^*j_{A\times C}^*\prod_{j_{A\times C}}\gamma$
    \item $\langle \pi_A,g\langle j_A\pi_A,\pi_D\rangle\rangle^*j_{A\times C}^*\prod_{j_{A\times C}}\gamma=j_{A\times D}^*\phi^*\langle \pi_{A+B},g \rangle^*(\theta^{-1})^*\prod_{j_{A\times C}}\gamma$
    \item $j_{A\times D}^*\phi^*\langle \pi_{A+B},g \rangle^*(\theta^{-1})^*\prod_{j_{A\times C}}\gamma=j_{A\times D}^*\prod_{j_{A\times D}}\langle \pi_A,g\langle j_A\pi_A,\pi_D\rangle \rangle^*\gamma$
    \item $j_{A\times C}^*\prod_{j_{A\times D}}\langle \pi_A,g\langle j_A\pi_A,\pi_D\rangle \rangle^*\gamma \xrightarrow{ \varepsilon_{\langle\pi_A,g\langle j_A\pi_A,\pi_D\rangle \rangle^*\gamma}}\langle\pi_A,g\langle j_A\pi_A,\pi_D\rangle \rangle^*\gamma$
\end{itemize} where the equality at the third bullet holds because the commutative square: $$\xymatrix{A\times D \ar[d]_{\langle \pi_A,g\langle j_A\pi_A,\pi_D\rangle\rangle} \ar[rr]^>>>>>>>>>{j_{A\times D}} && (A\times D)+(B\times D) \ar[d]^{\theta^{-1}\langle \pi_{A+B},g\rangle \phi} \\ A\times C \ar[rr]^>>>>>>>>>{j_{A\times C}} && (A\times C)+(B\times C)}$$ is a pullback.

Viceversa, whenever $(h,\; j_{A\times D}^*\phi^*\delta=\langle j_A\pi_A,\pi_D\rangle^*\delta\xrightarrow{\psi}\langle \pi_A,h \rangle^*\gamma)$ is an arrow: $$(A,D,\langle j_A\pi_A,\pi_D\rangle^*\delta)\to(A,C,\gamma)$$ we define $\flat^\coprodsym(h,\psi)$ to be the couple $([h,c!]\phi^{-1},(\phi^{-1})^*\psi^\flat)$ which is actually an arrow $(A+B,D,\delta) \to (A+B,C,(\theta^{-1})^*\prod_{j_{A\times C}}\gamma)$. Observe indeed that $\psi^\flat$ is an arrow $\phi^*\delta \to \prod_{j_{A\times S}}\langle \pi_{A},h\rangle^*\gamma$, hence $(\phi^{-1})^* \psi^\flat$ is an arrow: $$\delta \to \phi^{-1}\prod_{j_{A\times D}}\langle \pi_{A},h\rangle^*\gamma=\langle \pi_{A+B},[h,c!]\phi^{-1}\rangle^*(\theta^{-1})^*\prod_{j_{A\times C}}\gamma$$ where the last equality holds because the commutative square: $$\xymatrix{A\times D \ar[d]_{\langle \pi_A,h\rangle} \ar[rr]^>>>>>>>>>{j_{A\times D}} && (A\times D)+(B\times D) \ar[d]^{\theta^{-1}\langle \pi_{A+B},[h,c!]\phi^{-1}\rangle \phi} \\ A\times C \ar[rr]^>>>>>>>>>{j_{A\times C}} && (A\times C)+(B\times C)}$$ is a pullback. It is the case that: $$\begin{aligned} \sharp^\coprodsym\flat^\coprodsym (h,\psi) &=  \sharp^\coprodsym([h,c!]\phi^{-1},\;(\phi^{-1})^*\psi^\flat) \\ &=([h,c!]\phi^{-1}\langle j_A\pi_A,\pi_D\rangle,\;\varepsilon_{\langle \pi_A,[h,c!]\phi^{-1}\langle j_A\pi_A,\pi_D\rangle \rangle^*\gamma}\langle j_A\pi_A,\pi_D\rangle^*(\phi^{-1})^*\psi^\flat) \\
&= (h,\; \varepsilon_{\langle\pi_A,h \rangle^*\gamma}(j_{A\times D}^*\psi^\flat) ) \\ [\text{Proposition \ref{weak counit equality}}] &=(h,\psi) \end{aligned}$$ hence $\prod_{j_A}^\coprodsym$ is left-weakly right adjoint to the $j_A$-reindexing w.r.t. $\coprodcomp{\mp}$ and we are done. The fact that BCC is satisfied follows precisely as it follows in 3).

\medskip

Finally, let us verify that $2.a)$ and $2.b)$ hold.

\medskip
$2.a)$ The existence of (weak) products follow by (a weak version of) \cite[Prop. 9.2.1]{Jacobs1999}. The tricky point is to use the existence of (weak) left adjoints and fibred (weak) coproducts to define (weak) coproducts in the total category $\coprodcomp{\mp}$.

Let $(I,A,\alpha)$ and $(Y,B,\beta)$ be two objects of the total category $\coprodcomp{\mp}$. We define
\[ (I,A,\alpha)+(Y,B,\beta):= \coprod_{j_I}^\coprodsym(I,A,\alpha)+\coprod_{j_Y}^\coprodsym(Y,B,\beta)\]
i.e.
\[(I,A,\alpha)+(Y,B,\beta)=(I+Y,A+B, \omega^{*} (\coprod_{j_{I\x A}}(\alpha)+\coprod_{j_{Y\x B}}(\beta)))\]
where $\omega$ is the canonical isomorphism $(I+Y)\x (A+B)\cong (I\x A)+ (I\x B)+ (Y\x A)+ (Y\x B)$, together with injections:
\[(j_I,j_A\pr_A, \ovln{j}_{\alpha} \eta_{\alpha})\mbox{ and } (j_Y,j_B\pr_B,\ovln{j}_{\beta}  \eta_{\beta}).\]
It is direct to verify that this is actually a weak coproduct.

\medskip
$2.b)$ This is just Proposition \ref{fibredproductpreservation} and Proposition \ref{falso!}.\qedhere

\end{proof}

We state the dual version of Theorem \ref{summary preservations simple coproduct completion}, which follows by Lemma \ref{opfibration}, Proposition \ref{proposition coproc is dual to prod} and Theorem \ref{summary preservations simple coproduct completion}.

\begin{theorem}\label{summary preservations simple product completion}
If $\fibration{\mE}{\mp}{\mB}$ is an extendable fibration, then $\prodcomp{\mp}$ is an extendable fibration (with simple products). Moreover, $\prodcomp{\mp}$ satisfies the following:
\begin{enumerate}
\item[$2.a)$] the total category $\prodcomp{\mp}$ has both weak finite products and coproducts;
    \item[$2.b)$] if $\mp$ has fibred finite coproducts, then $\prodcomp{\mp}$ has fibred finite coproducts and, if $\mp$ has fibred finite products and right adjoints to the cartesian liftings along the injections, then $\prodcomp{\mp}$ has fibred finite products.
    \end{enumerate}
\end{theorem}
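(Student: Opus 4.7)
The plan is to derive this from Theorem \ref{summary preservations simple coproduct completion} by systematically dualising, using the isomorphism $\prodcomp{\mp}\cong \coprodcomp{\mp^{\op}}^{\op}$ of Proposition \ref{proposition coproc is dual to prod} together with the structural duality of Lemma \ref{opfibration}. The key observation is that the definition of an extendable fibration (Definition \ref{definition godel fibration}) is self-dual in the following sense: conditions 1) and 2) are symmetric in products and coproducts, while 3) and 4) are swapped when passing to $\mp^{\op}$, because the fibrewise opposite exchanges left and right (weak) adjoints along the same re-indexing functors and preserves BCC.

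First I would verify that if $\fibration{\mE}{\mp}{\mB}$ is extendable then $\mp^{\op}$ is extendable as well. Condition 1) is immediate since $\mp$ and $\mp^{\op}$ share the same base $\mB$. For 2), by Lemma \ref{opfibration}, fibred weak finite products of $\mp$ become fibred weak finite coproducts of $\mp^{\op}$ and vice versa, so the combined hypothesis transfers. For 3) and 4), if $\coprod_{j_A}\radjunction j_A^*$ is a right-weak adjunction with respect to $\mp$, then by reversing vertical arrows we obtain a left-weak adjunction with respect to $\mp^{\op}$ along the \emph{same} injection $j_A$, and symmetrically for left-weak right adjoints; the BCC condition for pullback squares of injections along injections is phrased purely in terms of re-indexing isomorphisms and hence survives fibrewise opposition unchanged.

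Next I would apply Theorem \ref{summary preservations simple coproduct completion} to $\mp^{\op}$: this yields that $\coprodcomp{\mp^{\op}}$ is an extendable fibration with simple coproducts, its total category has both weak finite products and coproducts, and the fibred (co)product preservation clauses of item 2.b) hold for $\coprodcomp{\mp^{\op}}$. Taking the fibrewise opposite of all of these and invoking Proposition \ref{proposition coproc is dual to prod} converts them into statements about $\prodcomp{\mp}$: being extendable is preserved under the opposite (by the same argument as in the first step, applied in reverse); simple coproducts on $\coprodcomp{\mp^{\op}}$ become simple products on $\prodcomp{\mp}$ by Lemma \ref{opfibration}; the total category $\prodcomp{\mp}$ still has weak finite products and weak finite coproducts because these two notions swap under $(-)^{\op}$ and we have both of them; and the preservation clauses in 2.b) swap products with coproducts, yielding the statement as given.

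The main obstacle is the careful bookkeeping in the first step: one must check that the direction of the weak adjunctions transforms correctly under the opposite fibration, and that the BCC is about the \emph{same} pullback squares in $\mB$ (of injections against injections) for both $\mp$ and $\mp^{\op}$. Once this dictionary is in place the rest is a mechanical translation; no further construction is needed, since the weak coproducts in the total category of $\prodcomp{\mp}$ obtained by duality from the weak products constructed in part 2.a) of Theorem \ref{summary preservations simple coproduct completion} can, if desired, be described explicitly as the dual of the formula there, namely $(I,A,\alpha)\times (Y,B,\beta)=\prod^{\prodsym}_{j_I}(I,A,\alpha)\times \prod^{\prodsym}_{j_Y}(Y,B,\beta)$ inside $\prodcomp{\mp}$, with the analogous description of fibred (co)products.
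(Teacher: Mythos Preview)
Your proposal is correct and follows exactly the paper's own route: the paper simply states that the theorem follows from Lemma \ref{opfibration}, Proposition \ref{proposition coproc is dual to prod} and Theorem \ref{summary preservations simple coproduct completion}, and your argument is precisely the unpacking of that duality, including the self-duality of the extendability conditions (with 3) and 4) swapping under $(-)^{\op}$). One small slip in your final paragraph: the explicit formula $\prod^{\prodsym}_{j_I}(I,A,\alpha)\times \prod^{\prodsym}_{j_Y}(Y,B,\beta)$ dualises the \emph{coproduct} construction of Theorem \ref{summary preservations simple coproduct completion} and therefore describes the weak \emph{products} in the total category of $\prodcomp{\mp}$, not the weak coproducts as you wrote.
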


Observe that Theorem \ref{summary preservations simple coproduct completion} and Theorem \ref{summary preservations simple product completion} show that not only extendable fibrations are preserved by the simple product and coproduct completions, but that these constructions extend the structures in the fibres to the whole total category.

\begin{theorem}\label{existential preserves universal}
Let $\mB$ be a cartesian closed category and let $\fibration{\mE}{\mp}{\mB}$ be a fibration with simple products. Then $\fibration{\coprodcomp{\mp}}{\coprodcomp{\mp}}{\mB}$ is a fibration with simple products and simple coproducts, i.e. the simple coproduct completion preserves the existence of simple product.

\proof \text{(Hofstra \cite{hofstra2011}) }

\medskip

\textit{Part I. Existence of right adjoints to weakenings.} Let $A_1,A_2$ be objects of $\mB$, and let $\arrow{A_1\x A_2}{\pr_{A_1}}{A_1}$ be the first projection. Let: $$\arrow{\coprodcomp{\mp}_{A_1\x A_2}}{\prod^\coprodsym_{\pi_{A_1}}}{\coprodcomp{\mp}_{A_1}}$$ be defined by: $$ (A_1\x A_2,B,\alpha)\mapsto (A_1,B^{A_2},\prod_{\angbr{\pr_1}{\pr_3}} {\angbr{\pr_1,\pr_2}{\ev \angbr{\pr_2}{\pr_3}}}^*\alpha)$$ where $\pr_i$ are the projections from $A_1\x A_2 \x B^{A_2}$ and $\arrow{A_2 \x B^{A_2}}{\ev}{B}$ is the evaluation map. The intuition is that the right adjoints act by mapping a formula $\exists b:B \alpha (a_1,a_2,b)\mapsto \exists f:B^{A_2}\forall a_2:A_2 \alpha(a_1,a_2, f(a_2))$. Let us verify that $\prod^\coprodsym_{\pr_{A_1}}$ is right adjoint to the $\pi_{A_1}$-weakening w.r.t. $\coprodcomp{\mp}$.

Let $(A_1,C,\gamma)$ and $(A_1\times A_2,B,\alpha)$ be objects of $\coprodcomp{\mp}_{A_1}$ and $\coprodcomp{\mp}_{A_1\times A_2}$ respectively. We are left to verify that between the homsets:
$$\coprodcomp{\mp}_{A_1\times A_2}\Big(\;(A_1 \times A_2,C,({\pr_{A_1}\times 1_C})^*(\gamma)),\;(A_1\times A_2,B,\alpha)\;\Big)$$
and:
$$\coprodcomp{\mp}_{A_1}\Big(\;(A_1,C,\gamma),\;(A_1,B^{A_2},\prod_{\angbr{\pr_1}{\pr_3}} {\angbr{\pr_1,\pr_2}{\ev \angbr{\pr_2}{\pr_3}}}^*\alpha\;\Big)$$
there is a natural bijection. An arrow of the former consists of: \textit{an arrow} $\arrow{A_1\times A_2\times C}{g}{B}$ \textit{together with an arrow} $({\pr_{A_1}\times 1_C})^*(\gamma) \xrightarrow{\varphi} {\langle \pr_{A_1\times A_2},g\rangle}^*\alpha$, that is:
\begin{align}
&\textit{an arrow }\arrow{A_1\times A_2\times C}{g}{B} \notag\\ &\textit{together with an arrow }\gamma \xrightarrow{\phi} \prod_{\pr_{A_1}\times 1_C}{\langle \pr_{A_1\times A_2},g\rangle}^*\alpha
\end{align}
while an arrow of the latter consists of:
\begin{align}
&\textit{an arrow }\arrow{A_1\times C}{h}{B^{A_2}} \notag\\ &\textit{together with an arrow } \gamma \xrightarrow{\psi} {\langle \pr_{A_1},h\rangle^*}\prod_{\angbr{\pr_1}{\pr_3}} {\angbr{\pr_1,\pr_2}{\ev \angbr{\pr_2}{\pr_3}}}^*\alpha
\end{align}
so that we are left to prove that there is a natural bijection between couples as in (2) and couples as in (3). Let $\pr'_i$ be the three projections from $A_2\times A_1 \times B^{A_2}$, $\overline{\pr}_i$ the projections from $A_1 \times A_2 \times C$ and $\overline{\pr}'_i$ the projections from $A_2\times A_1 \times C$. Moreover let us assume that there is a couple $(g,\phi)$ as in (2). We define the arrow $\arrow{A_1 \times C}{h}{B^{A_2}}$ to be the exponential transpose of $g\langle \overline{\pr}'_2,\overline{\pr}'_1,\overline{\pr}'_3\rangle$, hence it holds that $\ev (1_{A_2}\times h)=g\langle \overline{\pr}'_2,\overline{\pr}'_1,\overline{\pr}'_3\rangle$. We are going to prove that this choice of $h$ is such that:
\begin{align}
\prod_{\pr_{A_1}\times 1_C} {\langle \pr_{A_1\times A_2},g\rangle}^*={\langle \pr_{A_1},h\rangle}^*\prod_{\angbr{\pr_1}{\pr_3}} {\angbr{\pr_1,\pr_2}{\ev \angbr{\pr_2}{\pr_3}}}^*
\end{align}
allowing to conclude that $(h,\psi:=\phi)$ is a couple as in (3). Moreover observe that, from a given couple $(h,\psi)$ satisfying (3), one can always recover the corresponding arrow $g$ by anti-transposing $h$ and precomposing by $\langle \overline{\pr}_2,\overline{\pr}_1,\overline{\pr}_3\rangle$ (as $\langle \overline{\pr}'_2,\overline{\pr}'_1,\overline{\pr}'_3\rangle$ is an isomorphism whose inverse is indeed $\langle \overline{\pr}_2,\overline{\pr}_1,\overline{\pr}_3\rangle$). Therefore (4) would also imply that $(g,\phi:=\psi)$ is a couple as in (2), concluding our adjointness proof. Hence we are left to prove that (4) holds.

Let $x$ be the arrow $\arrow{A_1\times A_2 \times C}{(1_{A_2}\times h)\langle \overline{\pr}_2,\overline{\pr}_1,\overline{\pr}_3 \rangle}{A_2 \times B^{A_2}}$, and observe that the equality: $$\langle \pr_1,\pr_2,\ev \langle \pr_2,\pr_3\rangle\rangle\langle \overline{\pr}_1,x \rangle =\langle \overline{\pr}_1,\overline{\pr}_2,g\rangle =\langle \pr_{A_1\times A_2},g \rangle$$ holds. Hence it is the case that ${\langle \pr_{A_1\times A_2},g \rangle}^*={\langle \overline{\pr}_1,x \rangle}^*{\langle \pr_1,\pr_2,\ev \langle \pr_2,\pr_3\rangle\rangle}^*$ and therefore (4) follows if we prove that $\prod_{\pr_{A_1}\times 1_C}{\langle \overline{\pr}_1,x\rangle}^*={\langle \pr_{A_1},h\rangle}^*\prod_{\angbr{\pr_1}{\pr_3}}$ holds. By BCC for $\prod$ it is enough to prove that the right-hand square of the commutative diagram: $$\xymatrix{A_2\times A_1\times C \ar[d]^{\langle \overline{\pr}'_1, \overline{\pr}'_2,h\langle \overline{\pr}'_1,\overline{\pr}'_3\rangle \rangle} \ar[rr]^{\langle \overline{\pr}'_2,\overline{\pr}'_1,\overline{\pr}'_3\rangle} && A_1\times A_2\times C \ar[rrr]^{\pr_{A_1}\times 1_C=\langle \overline{\pr}_1,\overline{\pr}_3 \rangle} \ar[d]^{\langle \overline{\pr}_1,x \rangle} &&& A_1 \times C \ar[d]^{\langle \pr_{A_1},h\rangle} \\ A_2\times A_1 \times B^{A_2} \ar[rr]_{\langle \pr'_2, \pr'_1, \pr'_3\rangle} && A_1\times A_2\times B^{A_2} \ar[rrr]_{\langle \pr_1,\pr_3\rangle}   &&& A_1 \times B^{A_2} }$$ is a pullback. This is the case: the outer square is a pullback, as its horizontal arrows are the projections $A_2 \times A_1 \times C \to A_1 \times C$ and $A_2 \times A_1 \times B^{A_2} \to A_1 \times B^{A_2}$ and as $\langle \overline{\pr}'_1, \overline{\pr}'_2,h\langle \overline{\pr}'_1,\overline{\pr}'_3\rangle \rangle=1_{A_2}\times \langle \pr_{A_1},h\rangle$, and moreover the horizontal arrows of the left-hand square are isos, therefore the right-hand square is indeed a pullback as well. 

\medskip

\textit{Part II. BCC.} Let us consider a pullback of a projection along a given arrow $f$, which is of the form: 
$$\quadratocomm{D \times C}{D}{A\times C}{A.}{\pr_D}{f \times 1_C}{f}{\pr_A}$$ 
and let us verify that the corresponding equality $f^*\prod^\coprodsym_{\pr_A}=\prod^\coprodsym_{\pr_D}{(f \times 1_C)}^*$ holds.  Whenever $(A\times C,B,\beta)$ is an object of $\coprodcomp{\mp}_{A\times C}$ we get (by applying the left and right member of the wannabe equality respectively) the elements: $$(D,B^C,{(f\times 1_{B^C})}^*\prod_{\langle\pr_1,\pr_3 \rangle}{\langle\pr_1,\pr_2,\ev\langle \pr_2,\pr_3\rangle \rangle}^*\beta)$$ and $$(D,B^C,\prod_{\langle\overline{\pr}_1,\overline{\pr}_3 \rangle}{\langle\overline{\pr}_1,\overline{\pr}_2,\ev\langle \overline{\pr}_2,\overline{\pr}_3\rangle \rangle}^*{(f\times 1_{C\times B})}^*\beta)$$ of $\coprodcomp{\mp}_D$, being $\pr_i$ the projections from $A\times C\times B^C$ and $\overline{\pr}_i$ the projections from $D \times C\times B^C$. We are left to prove them to be equal. By BCC for $\prod$ it is the case that ${(f\times 1_{B^C})}^*\prod_{\langle\pr_1,\pr_3 \rangle}=\prod_{\langle \overline{\pr}_1,\overline{\pr}_3\rangle}{(f\times 1_C \times 1_{B^C})}^*$, hence we are left to observe that: $${(f\times 1_C\times 1_{B^C})}^*{\langle\pr_1,\pr_2,\ev\langle \pr_2,\pr_3\rangle \rangle}^*={\langle\overline{\pr}_1,\overline{\pr}_2,\ev\langle \overline{\pr}_2,\overline{\pr}_3\rangle \rangle}^*{(f\times 1_{C\times B})}^*$$ which holds because the class of arrows: $$\arrow{X \times C\times B^C}{\langle\pr_1,\pr_2,\ev\langle \pr_2,\pr_3\rangle \rangle}{X \times C\times B}$$ for $X$ in $\mB$ is a natural transformation $(-)\times C \times B^C\to (-)\times (C\times B)$.
\endproof
\end{theorem}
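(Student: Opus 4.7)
The plan is to construct, for each projection $\pi_{A_1}\colon A_1 \times A_2 \to A_1$ in $\mB$, a right adjoint $\prod^{\coprodsym}_{\pi_{A_1}}$ to the weakening functor $\pi_{A_1}^*\colon \coprodcomp{\mp}_{A_1} \to \coprodcomp{\mp}_{A_1 \times A_2}$, and then verify the Beck--Chevalley condition. The intuition is Skolemization itself: an object $(A_1\times A_2, B, \alpha)$ of $\coprodcomp{\mp}_{A_1\times A_2}$ represents the formula $\exists b{:}B.\,\alpha(a_1,a_2,b)$, so its universal closure along $a_2$ is $\forall a_2.\exists b.\,\alpha(a_1,a_2,b)$; by the standard choice-of-witnesses equivalence (available since $\mB$ is cartesian closed) this is the same as $\exists f{:}B^{A_2}.\forall a_2.\,\alpha(a_1,a_2,f(a_2))$, which already has the outer-existential shape needed to live in $\coprodcomp{\mp}_{A_1}$.

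Concretely, I would set
\[
\prod^{\coprodsym}_{\pi_{A_1}}(A_1\times A_2, B, \alpha) := \Bigl(A_1,\; B^{A_2},\; \prod_{\langle \pi_1,\pi_3\rangle}\langle \pi_1,\pi_2,\ev\langle \pi_2,\pi_3\rangle\rangle^{*}\alpha\Bigr),
\]
where $\pi_i$ denote the three projections from $A_1\times A_2\times B^{A_2}$ and $\ev\colon A_2\times B^{A_2}\to B$ is the evaluation. This construction uses both hypotheses: cartesian closedness of $\mB$ to form $B^{A_2}$, and simple products of $\mp$ to discharge the inner $\forall a_2$.

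For adjointness I would unpack both hom-sets using the definition of morphisms of $\coprodcomp{\mp}$. A map from $\pi_{A_1}^{*}(A_1,C,\gamma)$ to $(A_1\times A_2,B,\alpha)$ is a pair $(g,\phi)$ with $g\colon A_1\times A_2\times C\to B$ and a vertical $\phi$ expressible, after transposing along $\pr_{A_1}\times 1_C \dashv \prod_{\pr_{A_1}\times 1_C}$, as a map $\gamma\to \prod_{\pr_{A_1}\times 1_C}\langle \pi_{A_1\times A_2},g\rangle^{*}\alpha$. A map from $(A_1,C,\gamma)$ to $\prod^{\coprodsym}_{\pi_{A_1}}(A_1\times A_2,B,\alpha)$ is a pair $(h,\psi)$ with $h\colon A_1\times C\to B^{A_2}$ and a vertical component between the appropriate reindexings. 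The correspondence $g\leftrightarrow h$ is exponential transposition (after the bookkeeping swap $\langle \pr_2,\pr_1,\pr_3\rangle$), and the correspondence $\phi\leftrightarrow \psi$ is the $\pi^{*}\dashv \prod_{\pi}$-adjunction in $\mp$. The one substantive identity to check is
\[
\prod_{\pr_{A_1}\times 1_C}\langle \pi_{A_1\times A_2},g\rangle^{*} \;=\; \langle \pr_{A_1},h\rangle^{*}\prod_{\langle \pi_1,\pi_3\rangle}\langle \pi_1,\pi_2,\ev\langle \pi_2,\pi_3\rangle\rangle^{*},
\]
which by BCC for $\prod$ in $\mp$ reduces to exhibiting a certain square in $\mB$ (gluing the swap, the lift of $h$, and the projection $\langle \pi_1,\pi_3\rangle$) as a pullback; this is where one uses the defining property of currying in the form $1_{A_2}\times \langle \pr_{A_1},h\rangle = \langle \pi'_1,\pi'_2,h\langle \pi'_1,\pi'_3\rangle\rangle$ up to reordering.

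Finally, I would verify BCC for the new right adjoints along pullbacks of projections: given $f\colon D\to A$ and the pullback of $\pr_A$ along $f$, compute $f^{*}\prod^{\coprodsym}_{\pr_A}(A\times C,B,\beta)$ and $\prod^{\coprodsym}_{\pr_D}(f\times 1_C)^{*}(A\times C,B,\beta)$ from the definition and compare their last components. Applying BCC for $\prod$ in $\mp$ to the evident pullback of $f\times 1_{B^{C}}$ along $\langle \pi_1,\pi_3\rangle$, the comparison collapses to the naturality statement that the family $\langle \pi_1,\pi_2,\ev\langle \pi_2,\pi_3\rangle\rangle_X\colon X\times C\times B^{C}\to X\times C\times B$ is natural in $X$. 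The main obstacle throughout is purely bookkeeping — keeping the projections indexed correctly and checking that two reindexings agree as isomorphisms in $\mB$ — since the mathematical content is entirely carried by Skolemization plus BCC for the given fibration $\mp$.
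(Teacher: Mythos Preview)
Your proposal is correct and follows essentially the same route as the paper: the same definition of $\prod^{\coprodsym}_{\pi_{A_1}}$ via Skolemization, the same unpacking of the two hom-sets into pairs $(g,\phi)$ and $(h,\psi)$, the same reduction of the bijection to the key identity $\prod_{\pr_{A_1}\times 1_C}\langle \pi_{A_1\times A_2},g\rangle^{*}=\langle \pr_{A_1},h\rangle^{*}\prod_{\langle \pi_1,\pi_3\rangle}\langle \pi_1,\pi_2,\ev\langle \pi_2,\pi_3\rangle\rangle^{*}$ via BCC and a pullback check, and the same verification of BCC for the new right adjoints by reducing to naturality of the evaluation family. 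One small clarification: once that key identity holds, the paper simply sets $\psi:=\phi$ (the arrow is literally the same, only the codomain is rewritten), so no further adjoint transposition is needed for the vertical component.
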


We show that under our hypotheses the Dialectica monad $\dial{-}$ preserves the existence of fibred weak finite products and coproducts of a given fibration. The re-indexing along injections of the resulting fibration $\dial{\mp}$ happen to have weakly left and right adjoints as well.
Moreover, the total category $\dial{\mp}$ has finite (weak) products and coproducts.

\begin{theorem}\label{theorem dial has weak products and coproducts}
Let $\mB$ be a distributive category and let $\fibration{\mE}{\mp}{\mB}$ be an extendable fibration. 
Then: $$\fibration{\dial{\mp}}{\dial{\mp}}{\mB}$$
is an extendable G\"odel fibration and the total category $\dial{\mp}$ has both weak finite products and coproducts.

Moreover, if $\mp$ has fibred finite products and right adjoints to the cartesian liftings along the injections, then the fibration $\dial{\mp}$ has fibred finite products and, hence, its total category $\dial{\mp}$ has finite products.

\proof
It follows by Theorem \ref{summary preservations simple coproduct completion} and Theorem \ref{summary preservations simple product completion}. In detail, the first part of the statement follows by $2.a)$ of Theorem \ref{summary preservations simple coproduct completion} and $2.a)$ of Theorem \ref{summary preservations simple product completion}, and the second part of the statement follows by the second part of $2.b)$ of Theorem \ref{summary preservations simple product completion} and the first part of $2.b)$ of Theorem \ref{summary preservations simple coproduct completion}.
\endproof
\end{theorem}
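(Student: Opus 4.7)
The plan is to exploit Hofstra's decomposition $\dial{\mp}\cong \coprodcomp{\prodcomp{\mp}}$ and to chain together the two preservation theorems proved in the previous section. First, I would apply Theorem \ref{summary preservations simple product completion} to the extendable fibration $\mp$ to obtain that $\prodcomp{\mp}$ is again an extendable fibration: its base is unchanged (so clause $1)$ of Definition \ref{definition godel fibration} is automatic), clauses $2)$, $3)$ and $4)$ hold because the simple product completion preserves fibred weak finite (co)products and weak adjoints to reindexings along injections together with BCC. Then I would feed this output back into Theorem \ref{summary preservations simple coproduct completion}, applied now to $\prodcomp{\mp}$, to conclude that $\coprodcomp{\prodcomp{\mp}}=\dial{\mp}$ is an extendable fibration as well.

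To promote $\dial{\mp}$ to a G\"odel fibration, I would combine two ingredients. The simple coproducts are present by construction of $\coprodcomp{-}$, while the simple products of $\prodcomp{\mp}$ are transported to $\dial{\mp}$ by Theorem \ref{existential preserves universal}, which shows that the simple coproduct completion preserves the existence of simple products (and they satisfy BCC because $\mB$ is assumed cartesian closed, as in the extendable hypothesis this is compatible with points and distributivity in the concrete examples). Since $\dial{\mp}$ is by construction an instance of the Dialectica monad, Theorem \ref{theorem godel if and only if Dialectica} yields that $\dial{\mp}$ is a G\"odel fibration. Equivalently, one can appeal directly to Theorem \ref{proposition free-algebras coprod} and Theorem \ref{proposition free-algebras prod}: the objects of the form $(I,1,(I,1,\alpha))$ play the role of $\coprod$-quantifier-free objects, and the $\prod$-quantifier-free objects inside the sub-fibration of the former cover everything because of the two completions at work.

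For the total-category statements, clause $2.a)$ of Theorem \ref{summary preservations simple coproduct completion}, applied to the extendable fibration $\prodcomp{\mp}$, gives us weak finite products and coproducts in the total category $\coprodcomp{\prodcomp{\mp}}=\dial{\mp}$. Here the weak products come from the fibred weak products plus products in $\mB$ via the weak analogue of Jacob's Proposition 9.2.1, while the weak coproducts are built out of the weak left adjoints along injections provided by clause $3)$ of Definition \ref{definition godel fibration} (applied twice along the chain of completions). The delicate step I expect to be the main obstacle is checking that the weak adjoints constructed at the $\prodcomp{\mp}$-level genuinely compose with the ones constructed at the $\coprodcomp{(-)}$-level, so that the $\coprod^{\coprodsym}_{j}$ and $\prod^{\coprodsym}_{j}$ of Theorem \ref{summary preservations simple coproduct completion} indeed interact coherently with the $\coprod^{\prodsym}_{j}$ and $\prod^{\prodsym}_{j}$ of Theorem \ref{summary preservations simple product completion}; this requires a careful bookkeeping of the canonical isomorphisms $\theta$ mediating distributivity of products over coproducts.

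Finally, for the \emph{Moreover} part: assuming $\mp$ has fibred finite products and right adjoints to cartesian liftings along injections, the second half of clause $2.b)$ of Theorem \ref{summary preservations simple product completion} gives fibred finite products in $\prodcomp{\mp}$. These hypotheses then transfer to $\prodcomp{\mp}$ (including the right adjoints along injections, which were already shown to exist in the extendable structure), and the first half of clause $2.b)$ of Theorem \ref{summary preservations simple coproduct completion} yields fibred finite products in $\coprodcomp{\prodcomp{\mp}}=\dial{\mp}$. By the standard Proposition 9.2.1 of \cite{Jacobs1999} recalled earlier, this implies that the total category $\dial{\mp}$ has finite products, completing the proof.
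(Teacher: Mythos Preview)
Your approach is essentially the paper's: chain Theorem~\ref{summary preservations simple product completion} (to get that $\prodcomp{\mp}$ is extendable) with Theorem~\ref{summary preservations simple coproduct completion} (to get that $\coprodcomp{\prodcomp{\mp}}=\dial{\mp}$ is extendable), and read off clauses $2.a)$ and $2.b)$ for the total-category statements and the \emph{Moreover} part. The paper's proof is literally this two-line appeal.

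Two remarks. First, your worry about ``composing'' the weak adjoints across the two completions is unnecessary: once Theorem~\ref{summary preservations simple product completion} tells you $\prodcomp{\mp}$ is extendable, Theorem~\ref{summary preservations simple coproduct completion} applies to it as a black box, and its clause $2.a)$ gives weak finite (co)products in the total category $\dial{\mp}$ directly. No bookkeeping of $\theta$'s across levels is needed.

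Second, you are right to flag the cartesian-closedness issue, and you should not hand-wave it. The statement only assumes $\mB$ distributive, but the G\"odel part of the conclusion (and your invocation of Theorem~\ref{existential preserves universal} and Theorem~\ref{theorem godel if and only if Dialectica}) genuinely requires $\mB$ cartesian closed: both the definition of Skolem/G\"odel fibration and the proof that $\coprodcomp{-}$ preserves simple products use exponentials. The paper's own proof does not address this either, and in fact the subsequent Corollary restates the result under the cartesian-closed hypothesis precisely to make the G\"odel claim go through. So your extra detail on the G\"odel part is an improvement over the paper's terse proof, but you should state the cartesian-closed assumption explicitly rather than smuggling it in.
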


Combining Theorem \ref{summary preservations simple coproduct completion} together with Theorem \ref{theorem godel if and only if Dialectica} and Theorem \ref{existential preserves universal}, the following result follows:

\begin{cor}
Let $\mB$ be a cartesian closed category and let $\fibration{\mE}{\mp}{\mB}$ be an extendable fibration. 
Then: $$\fibration{\dial{\mp}}{\dial{\mp}}{\mB}$$
is an extendable G\"odel fibration (hence in particular an existential and universal fibration) and the total category $\dial{\mp}$ has both weak finite products and coproducts.

Moreover, if $\mp$ has fibred finite products and right adjoints to the cartesian liftings along the injections, then the fibration $\dial{\mp}$ has fibred finite products and, hence, its total category $\dial{\mp}$ has finite products.
\end{cor}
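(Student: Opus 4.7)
The plan is to exploit Hofstra's decomposition $\dial{\mp}\cong \coprodcomp{\prodcomp{\mp}}$ and apply in sequence the preservation results already proved for the two individual completions. Starting from the extendable fibration $\mp$, I would first invoke Theorem \ref{summary preservations simple product completion} to obtain that $\prodcomp{\mp}$ is again extendable (and now equipped with simple products). Since $\prodcomp{\mp}$ is itself extendable, Theorem \ref{summary preservations simple coproduct completion} applies and yields that $\coprodcomp{\prodcomp{\mp}}=\dial{\mp}$ is extendable and has simple coproducts. To ensure that the simple products inherited from $\prodcomp{\mp}$ survive the outer simple coproduct completion, I would invoke Theorem \ref{existential preserves universal}, whose hypotheses are met because $\mB$ is assumed cartesian closed. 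So $\dial{\mp}$ is extendable and has both simple products and simple coproducts.

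At this point $\dial{\mp}$ satisfies the hypotheses of Theorem \ref{theorem godel if and only if Dialectica}: it has simple products, simple coproducts, and its base $\mB$ is cartesian closed. Since $\dial{\mp}$ is by construction an instance of the Dialectica construction (taking $\mp'=\mp$), the characterisation immediately delivers that $\dial{\mp}$ is a G\"odel fibration, and hence in particular an existential and universal fibration. The weak finite products and coproducts in the total category then follow by chaining the two $2.a)$ clauses: $2.a)$ of Theorem \ref{summary preservations simple product completion} provides them in $\prodcomp{\mp}$, and $2.a)$ of Theorem \ref{summary preservations simple coproduct completion}, applied to the extendable fibration $\prodcomp{\mp}$, transports them to the total category $\coprodcomp{\prodcomp{\mp}}=\dial{\mp}$.

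For the ``moreover'' clause I would apply the first half of the $2.b)$ statement of Theorem \ref{summary preservations simple product completion}: the assumptions that $\mp$ has fibred finite products and right adjoints along injections produce fibred finite products in $\prodcomp{\mp}$. Then the first half of $2.b)$ in Theorem \ref{summary preservations simple coproduct completion} propagates fibred finite products to $\coprodcomp{\prodcomp{\mp}}=\dial{\mp}$, and Proposition 9.2.1 of Jacobs finally upgrades these to honest finite products in the total category $\dial{\mp}$. The only delicate point, and hence the main obstacle, is to verify at the hinge between the two completions that the extendability hypotheses required to apply the outer Theorem \ref{summary preservations simple coproduct completion} (distributive base with points, fibred weak finite products and coproducts, and both right-weakly left and left-weakly right adjoints along injections in $\prodcomp{\mp}$) are actually guaranteed by the output of Theorem \ref{summary preservations simple product completion}; but this compatibility is exactly what the extendability preservation statement was designed to secure, so the argument closes.
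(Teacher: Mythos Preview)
Your proposal is correct and follows essentially the same route as the paper, which simply cites Theorem~\ref{summary preservations simple coproduct completion}, Theorem~\ref{theorem godel if and only if Dialectica} and Theorem~\ref{existential preserves universal} (with Theorem~\ref{summary preservations simple product completion} implicit via the preceding Theorem~\ref{theorem dial has weak products and coproducts}); you have merely unpacked the chain of implications in more detail. One tiny slip: in the ``moreover'' clause you invoke the \emph{first} half of $2.b)$ in Theorem~\ref{summary preservations simple product completion}, but the clause about fibred finite products and right adjoints along injections is actually the \emph{second} half there---the content of your argument is correct, only the label is off.
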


We conclude this section by comparing our results with the literature.  Hyland \cite{HYLAND200243} and  Biering \cite{BIERING2008290} described how to go beyond simple tensor logic, providing sufficient conditions to ensure the existence of finite products in the Dialectica category associated to a cloven fibration. Hyland's formulation for preordered fibrations identified the following key requirements: 
\begin{itemize}
    \item the base category of a fibration $\fibration{\mE}{\mp}{\mB}$ has to be cartesian closed, with finite coproducts;
    \item $\mp$ has to be fibred cartesian closed;
    \item the injections $\arrow{A}{j_A}{A+B}$ and $\arrow{B}{j_B}{A+B}$ have to induce an equivalence $ \mE_A \x \mE_B\cong \mE_{A+B}$, natural in $A$ and $B$, and also $\mE_0\cong \mathbf{1}$.
\end{itemize}
Under these hypotheses, Hyland proved that for $\mathbf{Dial}(\mp)$  the fibre $\dial{\mp}_1$, has finite products.

 Biering has also shown that the Dialectica category $\mathbf{Dial}(\mp)$ which coincides with the fibred category $\dial{\mp}_1$ of the fibration $\dial{\mp}$ used in \cite{hofstra2011} and here has products.

\begin{proposition}[Prop. 6 in \cite{BIERING2008290}]\label{proposition Biering}
Let $\fibration{\mE}{\mp}{\mB}$ be a cloven fibration. 
\begin{itemize}
    \item Suppose $\mB$ has finite, distributive products and coproducts, and that the injections $\arrow{A}{j_A}{A+B}$ and $\arrow{B}{j_B}{A+B}$ induce an equivalence $\mu := \mE_A \x \mE_B\cong \mE_{A+B}$, natural in $A$ and $B$, then $\mathbf{Dial}(\mp)$ has binary products.
    \item Suppose that $\mE_0\cong \mathbf{1}$, then $\mathbf{Dial}(\mp)$ has a terminal object.
\end{itemize}
\end{proposition}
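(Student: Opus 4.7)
The plan is to construct the product of $(X,U,\alpha)$ and $(Y,V,\beta)$ in $\mathbf{Dial}(\mp)$ as the triple $(X\times Y,\;U+V,\;\gamma)$, where $\gamma\in\mE_{X\times Y\times (U+V)}$ is produced as follows. Distributivity of $\mB$ yields the isomorphism $X\times Y\times(U+V)\cong(X\times Y\times U)+(X\times Y\times V)$, so that the equivalence $\mu$ instantiated at these components identifies $\mE_{X\times Y\times (U+V)}$ with $\mE_{X\times Y\times U}\times\mE_{X\times Y\times V}$. I would then define $\gamma$ to correspond, under this equivalence, to the pair $(\pi^\ast\alpha,\pi^\ast\beta)$ of reindexings of $\alpha$ and $\beta$ along the projections forgetting $Y$ and $X$ respectively.

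The two projections out of the product have base components $\pi_X$ and $\pi_Y$ paired with $j_U$ and $j_V$ in the second slot; the third component (a vertical arrow) is forced by naturality of $\mu$, since reindexing $\gamma$ along $j_U$ returns the chosen representative $\pi^\ast\alpha$, and symmetrically on the $V$-side. For the universal property, given cones $(f_0,f_1,\phi)\colon(Z,W,\delta)\to(X,U,\alpha)$ and $(g_0,g_1,\psi)\colon(Z,W,\delta)\to(Y,V,\beta)$, the mediating map is built by taking $\langle f_0,g_0\rangle$ on the base, the copairing $[f_1,g_1]\colon Z\times(U+V)\cong Z\times U+Z\times V\to W$ on the second component (again using distributivity), and the vertical component obtained by bundling $\phi$ and $\psi$ through $\mu$ applied now at $Z\times U$ and $Z\times V$. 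Uniqueness of this mediating map follows from the universal properties of the product $X\times Y$ and the coproduct $U+V$ in $\mB$, together with the fact that $\mu$ is an equivalence.

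For the second part, if $\mE_0\cong\mathbf{1}$ then the terminal object of $\mathbf{Dial}(\mp)$ is $(1,0,\top)$, where $\top$ is the unique object of $\mE_0$. A morphism from any $(Z,W,\delta)$ to $(1,0,\top)$ is a triple whose first component $Z\to 1$ is forced by terminality and whose second component $Z\times 0\to W$ is forced because $Z\times 0\cong 0$ in a distributive category; the vertical component lives in $\mE_{Z\times 0}\cong\mE_0\cong\mathbf{1}$ and is therefore unique as well.

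The main obstacle I anticipate is the bookkeeping required to keep the transpositions across $\mu$ coherent with the distributivity isomorphisms, since $\mu$ is only an equivalence (and the distributivity isomorphisms are isomorphisms rather than equalities). Checking that the mediating map really is unique reduces to chasing a few naturality squares for $\mu$ in $A$ and $B$, and does not need any hypothesis beyond the ones listed. Everything else is routine Dialectica bookkeeping analogous to the constructions already done in Proposition~\ref{fibredproductpreservation} and Theorem~\ref{summary preservations simple coproduct completion}.
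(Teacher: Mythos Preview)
The paper does not actually supply a proof of this proposition: it is quoted verbatim as ``Prop.~6 in \cite{BIERING2008290}'' and left unproved, serving only as a point of comparison with the authors' own hypotheses (see the discussion leading into Proposition~\ref{ghesboro}). So there is no in-paper argument to compare against.

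That said, your sketch is the standard construction and is correct in outline. A couple of small points of precision: the second component of the first projection is not literally $j_U$ but the composite $j_U\pi_U\colon X\times Y\times U\to U+V$ (and dually for the other projection); and when you say ``reindexing $\gamma$ along $j_U$ returns $\pi^\ast\alpha$'', what you really use is that the inverse of $\mu$ is the functor $\langle j_{X\times Y\times U}^\ast,\,j_{X\times Y\times V}^\ast\rangle$, so restricting $\gamma$ along the coproduct injection recovers the chosen component up to the equivalence. The uniqueness argument is exactly as you say: it reduces to the universal properties of $\times$ and $+$ in $\mB$ together with full faithfulness of $\mu$, and the naturality of $\mu$ in $A,B$ is what guarantees the coherence with the distributivity isomorphisms you flagged as the main bookkeeping burden. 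For the terminal object, your argument is complete; note only that the hypothesis $\mE_0\cong\mathbf{1}$ is what forces uniqueness of the vertical component, while $Z\times 0\cong 0$ (hence the initiality needed for the second component) is available because distributive categories always satisfy $A\times 0\cong 0$.
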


Our goal is clearly similar, we want to know when $\dial{\mp}$ has (weak) products and (weak) coproducts, but our perspective is different. Since we take advantage of Hofstra's decomposition, which exploits the \emph{duality} of product and coproduct completions, the hypotheses we assume are always easily \emph{dualized}. 
We also emphasize the naturality of asking for the \emph{existence of adjoints} along the re-indexing by some maps, instead of the equivalence between the fibres they introduce. In the literature one can find several properties of fibrations which depend on the existence of particular adjoints. 

In the following proposition we formally compare our assumptions with those introduced previously.
\begin{proposition} \label{ghesboro}
Let $\fibration{\mE}{\mp}{\mB}$ be a fibration satisfying the conditions of \ref{proposition Biering}. If $\mp$ has fibred finite products, then $\mp$ has right adjoints to the re-indexing functors along injections. Similarly, if $\mp$ has fibred coproducts, then $\mp$ has left adjoints to the re-indexing functors along injections.
\end{proposition}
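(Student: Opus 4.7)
The plan is to exploit the equivalence $\mu : \mE_A \times \mE_B \cong \mE_{A+B}$ provided by the hypotheses of Proposition \ref{proposition Biering}, which reduces the construction of adjoints to $j_A^*$ and $j_B^*$ to the (essentially trivial) construction of adjoints to a projection functor out of a product category. Under this equivalence the re-indexing functor $j_A^* : \mE_{A+B} \to \mE_A$ corresponds to the first projection $\pi_1 : \mE_A \times \mE_B \to \mE_A$, because one half of the equivalence is precisely the assignment $\gamma \mapsto (j_A^*\gamma, j_B^*\gamma)$, so $\pi_1 \circ \mu^{-1} \cong j_A^*$. The analogous identification holds for $j_B^*$ and $\pi_2$.

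Assume first that $\mp$ has fibred finite products, so that every fibre $\mE_B$ carries a terminal object $1_B$. Then $\pi_1$ has a right adjoint $R_1 : \mE_A \to \mE_A \times \mE_B$ defined by $\alpha \mapsto (\alpha, 1_B)$, because of the natural bijection
\begin{equation*}
(\mE_A \times \mE_B)\bigl((\alpha',\beta'),(\alpha,1_B)\bigr) \;\cong\; \mE_A(\alpha',\alpha)\times \mE_B(\beta', 1_B) \;\cong\; \mE_A(\alpha',\alpha),
\end{equation*}
whose second isomorphism holds since $\mE_B(\beta', 1_B)$ is a singleton. Composing with $\mu$ yields the desired right adjoint $\prod_{j_A}(\alpha) := \mu(\alpha, 1_B)$ of $j_A^*$; symmetrically, $\prod_{j_B}(\beta) := \mu(1_A, \beta)$ is right adjoint to $j_B^*$.

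For the dual statement one repeats the same argument with fibred coproducts in place of fibred products: the fibres now carry initial objects $0_B$, and $\pi_1$ acquires a left adjoint $\alpha \mapsto (\alpha, 0_B)$, which transports through $\mu$ to the required left adjoint $\coprod_{j_A}(\alpha) := \mu(\alpha, 0_B)$ of $j_A^*$ (and similarly for $j_B^*$). No substantial obstacle is expected; the only bookkeeping step is to track naturality of $\mu$ in $A$ and $B$ so that the constructed adjoints patch coherently across the injections, but this coherence is already built into the hypotheses of Proposition \ref{proposition Biering}.
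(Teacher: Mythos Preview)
Your proposal is correct and follows essentially the same route as the paper: define $\prod_{j_A}$ as the composite $\mu\circ\langle \id_{\mE_A},\top\rangle$ (where $\top$ picks out the fibred terminal object of $\mE_B$), identify $\mu^{-1}$ with $\langle j_A^*,j_B^*\rangle$ so that $j_A^*$ corresponds to the first projection, and read off the adjunction from the fact that $\mE_B(-,\top)$ is a singleton; the coproduct case is the dual argument with the initial object. Your remark about tracking naturality of $\mu$ is unnecessary for the statement as written, which only asks for the existence of the adjoints and not for BCC.
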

\begin{proof}
If a fibration $\fibration{\mE}{\mp}{\mB}$ has finite finite products, and it satisfies the hypothesis of Proposition \ref{proposition Biering}, for every $A$ and $B$, we can define a functor $\arrow{\mE_A}{}{\mE_{A+B}}$ as 
$$\xymatrix{\mE_A\ar[rr]^{\angbr{\id_{\mE_A}}{\top}}& &\mE_A\x \mE_B\ar[r]^{\mu} &\mE_{A+B}
} $$
where $\arrow{\mE_A}{\top}{\mE_B}$ is the functor which assigns to every object of the fibre $\mE_A$ the terminal object of $\mE_B$. We denote the composition $\mu \angbr{\id_{\mE_A}}{\top}$ by $\prod_{j_A}$.
Now we have that 
$$\mE_{A+B}(\alpha, \prod_{j_A}(\beta))\cong \mE_{A}\x\mE_B(\mu^{-1}(\alpha), \angbr{\id_{\mE_{A}}}{\top}(\beta))=\mE_A\x \mE_B( \angbr{j_A^*(\alpha)}{j_B^*(\beta)},\angbr{\beta}{\top})$$ 
and the last one is isomorphic to $\mE_A(j_A^*(\alpha),\beta)$. 
\end{proof}

\section{Conclusion}
Our results clarify the original Dialectica construction from both a categorical and logical perspective, and  they contribute to a deeper understanding of the construction. 

Our first main result Theorem \ref{theorem godel if and only if Dialectica}, provides an internal characterization of fibrations which are instances of the Dialectica construction, highlighting the key features a fibration should satisfy, namely it must be a G\"odel fibration, 
to be an instance of the Dialectica construction. 

Our presentation in terms of  G\"odel fibrations underlines a double nature of Dialectica fibrations: they satisfy principles which are typical of classical logic, such as the existence of a prenex normal form presentation for formulae, but they also satisfy principles normally associated to intuitionistic logic.  For example, they satisfy the existence of terms witnessing a proof: for every proof of  $\alpha \vdash \exists x \beta (x)$ where $\alpha$ is quantifier-free, we have a proof of $\alpha \vdash \beta (t)$ for some term $t$.  

We have proved that the Dialectica construction proposed by Hofstra \cite{hofstra2011} preserves the properties of extendable fibrations, showing that the Dialectica construction has a symmetric behaviour with respect to weak structures, but it is asymmetric with the strict ones.

Moreover, we have generalized the results of de Paiva \cite{dePaiva1989Dialectica}, Hyland \cite{HYLAND200243} and Biering \cite{BIERING2008290} to this general construction, and we have shown, in addition, that assuming coproducts in the starting extendable fibration, the weak coproducts in the corresponding Dialectica category are also associative.
This implies that the weak coproducts are almost as good as real coproducts, as far as logical properties are considered.


Dialectica-like constructions are pervasive in several areas of mathematics and computer science, and we briefly describe some future work, based on our previous analysis. 
We wonder if the decomposition introduced by Hofstra can be extended or modified to provide similar results for \emph{cousins} of the Dialectica construction. In particular, we  believe that this decomposition, combined with the results presented in \cite{trotta2020}, could be generalized to the context of dependent type theory.

There are two fibrations which seem to share common features with the Dialectica construction. In particular, we would like to investigate and compare the fibrations arising from work by Abramsky and Väänänen \cite{IfBi} on the Hodges semantics for independence-friendly logic and the Dialectica tripos, which is a model of separation logic \cite{Biering_Dialecticainterpretations}.

Finally, the strong constructive features of Dialectica fibrations we have shown suggest that these kinds of fibrations could lead to interesting applications in constructive foundations for mathematics and proof theory.
Recall that Maietti \cite{DBLPMaietti17} showed that the unique choice rule, and hence the choice rule, are not valid in either Coquand’s Calculus of Constructions nor in its predicative version implemented in the intensional level of the Minimalist Foundation. This means that in these theories the extraction of computational witnesses from existential statements must be performed in a more expressive proofs-as-programs theory.
However, given the preservation of the structures together with the validity of a form of choice in the internal language of the Dialectica fibration we provided, we may be able to use the Dialectica monad to extend constructive theories to more expressive systems, in which the extraction of computational witnesses from existential statements is allowed.
With this result in mind, understanding when the Dialectica construction preserves the logical implication is fundamental, since this would be required for extending the theory preserving all the logical constructors. 
\bibliographystyle{plain}
\bibliography{references}

\appendix
\section{Left-weak and right-weak adjunctions}\label{weak adjunctions}

\noindent
We recall that the notions of right-weak and left-weak adjunction appear in a similar context in \cite{moss2018}. 

\begin{definition}\label{right-weak adjunction}
Let $\mathcal{C}$ and $\mathcal{D}$ be functors and let $F \colon \mathcal{C}\longleftarrow\mathcal{D}$ and $\fibration{\mathcal{C}}{G}{\mathcal{D}}$ be functors. We say that the pair $(F,G)$ is a \bemph{right-weak adjunction} of the categories $\mathcal{C}$ and $\mathcal{D}$, and we indicate this as $F \radjunction G$, if there is a natural transformation: $$\mathcal{C}(F-,-)\xrightarrow{(-)^\flat}\mathcal{D}(-,G-)$$ together with a choice of a section $(-)^\sharp$ of every $(D,C)$-component of $(-)^\flat$, being $C$ an object of $\mathcal{C}$ and $D$ an object of $\mathcal{D}$. We also say that $F$ is \bemph{right-weakly left adjoint} to $G$ and that $G$ is \bemph{right-weakly right adjoint} to $F$.
\end{definition}

\begin{definition}\label{left-weak adjunction}
Let $\mathcal{C}$ and $\mathcal{D}$ be functors and let $F \colon \mathcal{C}\longleftarrow\mathcal{D}$ and $\fibration{\mathcal{C}}{G}{\mathcal{D}}$ be functors. We say that the pair $(F,G)$ is a \bemph{left-weak adjunction} of the categories $\mathcal{C}$ and $\mathcal{D}$, and we indicate this as $F \ladjunction G$, if there is a natural transformation: $$\mathcal{C}(F-,-)\xleftarrow{(-)^\sharp}\mathcal{D}(-,G-)$$ together with a choice of a section $(-)^\flat$ of every $(D,C)$-component of $(-)^\sharp$, being $C$ an object of $\mathcal{C}$ and $D$ an object of $\mathcal{D}$. We also say that $F$ is \bemph{left-weakly left adjoint} to $G$ and that $G$ is \bemph{left-weakly right adjoint} to $F$.
\end{definition}

\noindent
\textit{Part I. Properties of right-weak adjunctions.} Let $(\fibration{\mathcal{D}}{F}{\mathcal{C}})\radjunction (\fibration{\mathcal{C}}{G}{\mathcal{D}})$ be a right-weak adjunction of the categories $\mathcal{C}$ and $\mathcal{D}$ (see Definition \ref{right-weak adjunction}). Let $D$ be an object of $\mathcal{D}$ and let $\arrow{D}{\eta_D}{GFD}$ be the arrow $(\arrow{FD}{1_{FD}}{FD})^\flat$. Then the following results hold:

\begin{proposition}[\textit{Unit of $F\radjunction G$}] \label{q}
The class of the arrows $\eta_D$, where $D$ is an object of $\mathcal{D}$, is a natural transformation $\arrow{1_{\mathcal{D}}}{\eta}{GF}$, which is called \emph{unit} of the right-weak adjunction.

\proof
Whenever $\arrow{D}{f}{D'}$ is an arrow of $\mathcal{D}$, the equality $(GFf)\eta_D=\eta_{D'}f$ follows by the commutativity of the diagram: $$\xymatrix@-0.6pc{\mathcal{C}(FD',FD') \ar[d]_{(-)^\flat} \ar[rr]^{(-)(Ff)} && \mathcal{C}(FD,FD') \ar[d]_{(-)^\flat} && \mathcal{C}(FD,FD) \ar[ll]_{(Ff)(-)} \ar[d]_{(-)^\flat} \\ \mathcal{D}(D',GFD') \ar[rr]^{(-)(f)} && \mathcal{D}(D,GFD') && \mathcal{D}(D,GFD)\ar[ll]_{(GFf)(-)}}$$ applied to the arrows $1_{FD'}$ a $1_{FD}$.
\endproof
\end{proposition}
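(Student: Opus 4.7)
The plan is to unpack the definition of $\eta_D$ as $(1_{FD})^\flat$ and exploit the naturality of the bijection-like transformation $(-)^\flat$ in its two variables separately. Concretely, $(-)^\flat$ is a natural transformation between the hom-bifunctors $\mathcal{C}(F-,-)$ and $\mathcal{D}(-,G-)$, both regarded as functors $\mathcal{D}^{\op}\times \mathcal{C}\to\mathbf{Set}$, so each of its components fits into two families of naturality squares: one family indexed by arrows in $\mathcal{C}$ (naturality in the second variable), and one family indexed by arrows in $\mathcal{D}$ (naturality in the first variable).

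Given an arrow $\arrow{D}{f}{D'}$ of $\mathcal{D}$, I would show the equality $(GFf)\eta_D=\eta_{D'}f$ by evaluating both sides. For the left-hand side, I apply naturality of $(-)^\flat$ in the second variable at the arrow $\arrow{FD}{Ff}{FD'}$: this gives
$$(GFf)(1_{FD})^\flat=\big((Ff)\circ 1_{FD}\big)^\flat=(Ff)^\flat.$$
For the right-hand side, I apply naturality in the first variable at the same arrow $f$: this gives
$$(1_{FD'})^\flat\, f=\big(1_{FD'}\circ Ff\big)^\flat=(Ff)^\flat.$$
Both computations yield the common value $(Ff)^\flat$, so $(GFf)\eta_D=\eta_{D'}f$ as required.

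This is exactly the content of the commutative diagram displayed in the excerpt: the left-hand square expresses naturality of $(-)^\flat$ in the first variable along $f$, the right-hand square expresses naturality in the second variable along $Ff$, and tracing the identities $1_{FD'}$ and $1_{FD}$ through the two routes into $\mathcal{D}(D,GFD')$ produces the two sides of the desired equation. There is no substantive obstacle here: the section $(-)^\sharp$ plays no role whatsoever, and the entire argument depends only on $(-)^\flat$ being natural, which is part of Definition \ref{right-weak adjunction}. The only point worth flagging is that we must use naturality in each variable separately rather than the composite \emph{dinaturality}, and that the choice $\eta_D:=(1_{FD})^\flat$ is exactly what is needed to make the two identities $1_{FD}$, $1_{FD'}$ match up after passing through the naturality squares.
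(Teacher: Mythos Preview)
Your proof is correct and is essentially identical to the paper's own argument: both compute $(GFf)\eta_D$ and $\eta_{D'}f$ separately via the naturality of $(-)^\flat$ in each variable, obtaining the common value $(Ff)^\flat$. Your explicit identification of which naturality square handles which side matches the paper's diagram exactly.
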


\begin{proposition}\label{weak unit equality}
Let us consider an arrow $\arrow{D}{g}{GC}$ of $\mathcal{D}$. Then the equality: $$G(g^\sharp)\eta_D=g$$ holds.

\proof
The equality follows by the commutativity of the diagram: $$\xymatrix{\mathcal{C}(FD,C) \ar[d]_{(-)^\flat} && \mathcal{C}(FD,FD) \ar[ll]_{(g^\sharp)(-)} \ar[d]_{(-)^\flat} \\ \mathcal{D}(D,GC) && \mathcal{D}(D,GFD)\ar[ll]_{(G(g^\sharp))(-)}}$$ applied to the arrow $1_{FD}$ and since $(g^\sharp)^\flat=g$. \endproof
\end{proposition}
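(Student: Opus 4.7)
The plan is to obtain the equality $G(g^\sharp)\eta_D=g$ as a diagram chase, starting from the identity arrow $1_{FD}$ and running it around a naturality square for $(-)^\flat$ in its second argument.

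First, I would unpack the two ingredients available. By definition of the unit one has $\eta_D=(1_{FD})^\flat$, and by the hypothesis that $(-)^\sharp$ is a section of $(-)^\flat$ one has $(g^\sharp)^\flat=g$ for every arrow $\arrow{D}{g}{GC}$ of $\mathcal{D}$. The next step is to exploit the fact that the transformation $(-)^\flat\colon \mathcal{C}(F-,-)\to \mathcal{D}(-,G-)$ is natural in its \emph{second} (covariant) argument, instantiated at the morphism $\arrow{FD}{g^\sharp}{C}$ of $\mathcal{C}$. This naturality gives the commutative square
$$\xymatrix{\mathcal{C}(FD,FD) \ar[rr]^{(g^\sharp)\circ (-)} \ar[d]_{(-)^\flat} && \mathcal{C}(FD,C) \ar[d]^{(-)^\flat} \\ \mathcal{D}(D,GFD) \ar[rr]_{G(g^\sharp)\circ(-)} && \mathcal{D}(D,GC).}$$

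Finally, I would chase the element $1_{FD}\in\mathcal{C}(FD,FD)$ around this square. The top-then-right route produces $(g^\sharp\circ 1_{FD})^\flat=(g^\sharp)^\flat=g$, using that $\sharp$ is a section of $\flat$. The left-then-bottom route produces $G(g^\sharp)\circ (1_{FD})^\flat=G(g^\sharp)\circ \eta_D$, by definition of $\eta_D$. Commutativity forces these two expressions to coincide, yielding the desired equality $G(g^\sharp)\eta_D=g$.

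There is no real obstacle here: the whole argument is a single application of naturality combined with the definition of $\eta_D$ and the section-retraction identity $(-)^\sharp\cdot (-)^\flat=\mathrm{id}$. The only point one should be careful about is that the relevant naturality is in the covariant variable of $(-)^\flat$ (postcomposition along $g^\sharp$), not in the contravariant one, so that the horizontal maps of the square are $(g^\sharp)\circ(-)$ and $G(g^\sharp)\circ(-)$; once this is fixed, the chase is immediate.
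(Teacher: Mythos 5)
Your proposal is correct and follows exactly the paper's own argument: the same naturality square for $(-)^\flat$ in its covariant argument at $g^\sharp$, chased on the element $1_{FD}$, combined with $\eta_D=(1_{FD})^\flat$ and the section identity $(g^\sharp)^\flat=g$. Nothing differs beyond the orientation in which the square is drawn.
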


\begin{proposition} \label{r-w r adjoint preserves w l}
The functor $G$ preserves weak limits.

\begin{proof}
Let $T$ be a functor $\mathcal{J}\to \mathcal{C}$ and let $\arrow{C}{\gamma}{T}$ be a $\mathcal{J}$-indexed class of arrows in $\mathcal{C}$ exhibiting $C$ as a weak limit of $T$ in $\mathcal{C}$. Whenever $\arrow{D}{\delta}{GT}$ is a $\mathcal{J}$-indexed class of arrows in $\mathcal{D}$, it is the case that: $$(\arrow{FD}{\delta^{\sharp}}{T}):=\{ \arrow{FD}{\delta_J^\sharp}{TJ} \}_{J \colon \mathcal{J}}$$ is a $\mathcal{J}$-indexed class of arrows in $\mathcal{C}$. Hence there is an arrow $\arrow{FD}{\phi}{C}$ such that the $\mathcal{J}$-indexed class of triangles: $$\xymatrix{FD \ar[r]^{\delta^\sharp} \ar[d]_{\phi} & T \\ C \ar[ur]_{\gamma}}$$ of $\mathcal{C}$ commutes component-wise. Therefore, by naturality of $(-)^\flat$ and being the components of $(-)^\sharp$ sections of the ones of $(-)^\flat$, it is the case that the $\mathcal{J}$-indexed class of triangles: $$\xymatrix{D \ar[r]^{\delta} \ar[d]_{\phi^\flat} & GT \\ GC \ar[ur]_{G\gamma}}$$ of $\mathcal{D}$ commutes component-wise. We conclude that the $\mathcal{J}$-indexed family $G\gamma$ of arrows of $\mathcal{D}$ exhibits $GC$ as a weak limit of $GT$ in $\mathcal{D}$.
\end{proof}
\end{proposition}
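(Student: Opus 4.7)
The plan is to exploit the right-weak adjunction to transport cones back and forth between $\mathcal{C}$ and $\mathcal{D}$: pull a cone in $\mathcal{D}$ over $GT$ back along $(-)^\sharp$ to a family over $T$, apply the weak universal property in $\mathcal{C}$, and then push the resulting factorization forward along $(-)^\flat$. First, since $G$ is a functor and $\gamma\colon C\to T$ is a cone, the family $G\gamma\colon GC\to GT$ is automatically a cone in $\mathcal{D}$, so only the factorization property remains.

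Given an arbitrary cone $\delta\colon D\to GT$ in $\mathcal{D}$, I would define $\delta^\sharp\colon FD\to T$ componentwise by $\delta^\sharp_J:=(\delta_J)^\sharp$, where $(-)^\sharp$ is the chosen section of the component $(-)^\flat\colon \mathcal{C}(FD,TJ)\to\mathcal{D}(D,GTJ)$. Assuming this family is a (weak) cone in $\mathcal{C}$, the weak limit property of $(C,\gamma)$ provides some $\phi\colon FD\to C$ with $\gamma_J\circ\phi=\delta^\sharp_J$ for every $J$. Applying $(-)^\flat$ to $\phi$ and using naturality of $(-)^\flat$ together with $(\delta^\sharp_J)^\flat=\delta_J$ (since $\sharp$ is a section of $\flat$), one computes $G\gamma_J\circ\phi^\flat=(\gamma_J\circ\phi)^\flat=(\delta^\sharp_J)^\flat=\delta_J$ for every $J$. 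Hence $\phi^\flat\colon D\to GC$ is the required (non-unique) factorization, showing that $G\gamma$ exhibits $GC$ as a weak limit of $GT$.

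The main obstacle is the one sentence I glossed over: verifying that $\delta^\sharp$ is actually a cone over $T$, that is, that $Tf\circ\delta^\sharp_J=\delta^\sharp_K$ for every $f\colon J\to K$ in $\mathcal{J}$. By naturality of $(-)^\flat$ both arrows map under $\flat$ to the same morphism $\delta_K=G(Tf)\circ\delta_J$, but $\flat$ need not be injective, so this equality does not follow automatically from the bare definition of a right-weak adjunction. To close this gap I would argue that weak-limit factorizations only need to commute with the cone legs, not with the whole diagram simultaneously, so it suffices to choose, for each $J$, an arrow $\delta^\sharp_J$ whose composites with the $\gamma_J$'s are coherent up to invocation of the weak limit once; alternatively, one can upgrade the sections $(-)^\sharp$ to a compatible choice (as in fact happens in all the constructions of Theorem \ref{summary preservations simple coproduct completion}) so that the family is a genuine cone. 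Either way, once compatibility is secured, the remaining steps reduce to the routine naturality computation above and an appeal to Proposition \ref{weak unit equality}.
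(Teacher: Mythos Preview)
Your approach is essentially the paper's: transport the family $\delta$ along $(-)^\sharp$, factor through the weak limit in $\mathcal{C}$, then push back along $(-)^\flat$ using naturality and the section property. The paper carries out exactly these steps with the same computation $G\gamma_J\circ\phi^\flat=(\gamma_J\circ\phi)^\flat=(\delta_J^\sharp)^\flat=\delta_J$.

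The gap you flag --- that $\delta^\sharp$ need not be a cone because $(-)^\flat$ is merely a retraction --- is genuine for non-discrete $\mathcal{J}$, and the paper's own proof does not address it either: it simply calls $\delta^\sharp$ a ``$\mathcal{J}$-indexed class of arrows'' and then invokes the weak-limit property without checking compatibility with the arrows of $\mathcal{J}$. So neither argument fully establishes the proposition for arbitrary diagrams. In the paper, however, the only use of this result is for weak finite \emph{products} (Theorem \ref{summary preservations simple coproduct completion} and its dual), where $\mathcal{J}$ is discrete and the cone condition is vacuous; your worry is legitimate in general but immaterial in the intended applications. The cleanest remedy is to restrict the statement to discrete $\mathcal{J}$, which is in effect what both you and the paper are doing.

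One minor point: the final appeal to Proposition \ref{weak unit equality} is unnecessary. The naturality-plus-section computation you already wrote suffices, and that is exactly how the paper concludes.
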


\noindent
\textit{Part II. Properties of left-weak adjunctions.} Analogously, let $(\fibration{\mathcal{D}}{F}{\mathcal{C}})\ladjunction (\fibration{\mathcal{C}}{G}{\mathcal{D}})$ be a left-weak adjunction of the categories $\mathcal{C}$ and $\mathcal{D}$ (see Definition \ref{left-weak adjunction}). Let $C$ be an object of $\mathcal{C}$ and let $\arrow{FGC}{\varepsilon_C}{C}$ be the arrow $(\arrow{GC}{1_{GC}}{GC})^\sharp$. Then the following analogous results hold:

\begin{proposition}[\textit{Counit of $F\ladjunction G$}] 
The class of the arrows $\varepsilon_C$, where $C$ is an object of $\mathcal{C}$, is a natural transformation $\arrow{FG}{\varepsilon}{1_{\mathcal{C}}}$, which is called \emph{counit} of the left-weak adjunction.

\proof
Whenever $\arrow{C}{g}{C'}$ is an arrow of $\mathcal{C}$, the equality $\varepsilon_{C'}(FGg)=g\varepsilon_C$ follows by the commutativity of the diagram: $$\xymatrix@-0.6pc{\mathcal{C}(FGC,C) \ar[rr]^{(g)(-)} && \mathcal{C}(FGC,C') && \mathcal{C}(FGC',C') \ar[ll]_{(-)(FGg)} \\ \mathcal{D}(GC,GC) \ar[u]^{(-)^\sharp} \ar[rr]^{(Gg)(-)} && \mathcal{D}(GC,GC') \ar[u]^{(-)^\sharp} && \mathcal{D}(GC',GC') \ar[u]^{(-)^\sharp}\ar[ll]_{(-)(Gg)}}$$ applied to the arrows $1_{GC}$ a $1_{GC'}$.
\endproof
\end{proposition}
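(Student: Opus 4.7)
The plan is a purely formal naturality chase, dual to the proof of Proposition \ref{q}. By Definition \ref{left-weak adjunction}, the family $(-)^\sharp$ is a natural transformation $\mathcal{D}(-,G-)\Rightarrow\mathcal{C}(F-,-)$, and therefore natural both in the contravariant $\mathcal{D}$-variable and in the covariant $\mathcal{C}$-variable. Fix an arrow $\arrow{C}{g}{C'}$ of $\mathcal{C}$; I aim to show that the equality $\varepsilon_{C'}(FGg)=g\,\varepsilon_C$ holds in $\mathcal{C}(FGC,C')$.

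First I will apply naturality of $(-)^\sharp$ in the $\mathcal{C}$-variable along $g \colon C\to C'$, with the $\mathcal{D}$-variable frozen at $GC$. The resulting commutative square has top row the postcomposition $(Gg)\circ(-)\colon\mathcal{D}(GC,GC)\to\mathcal{D}(GC,GC')$, bottom row the postcomposition $g\circ(-)\colon\mathcal{C}(FGC,C)\to\mathcal{C}(FGC,C')$, and both vertical maps given by the relevant components of $(-)^\sharp$. Chasing the identity $1_{GC}$: going down-then-right yields $g\circ\varepsilon_C$ (using that $(1_{GC})^\sharp=\varepsilon_C$ by definition), while going right-then-down yields $(Gg)^\sharp$. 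Hence $g\,\varepsilon_C=(Gg)^\sharp$.

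Next I will apply naturality of $(-)^\sharp$ in the $\mathcal{D}$-variable along $Gg \colon GC \to GC'$, with the $\mathcal{C}$-variable frozen at $C'$. The resulting commutative square has top row the precomposition $(-)\circ(Gg)\colon\mathcal{D}(GC',GC')\to\mathcal{D}(GC,GC')$, bottom row the precomposition $(-)\circ FGg\colon\mathcal{C}(FGC',C')\to\mathcal{C}(FGC,C')$, and both vertical maps given by the relevant components of $(-)^\sharp$. Chasing the identity $1_{GC'}$ through this square gives $\varepsilon_{C'}\circ FGg=(Gg)^\sharp$. Combining the two derived identities, $g\,\varepsilon_C=(Gg)^\sharp=\varepsilon_{C'}\circ FGg$, which is exactly the required naturality of $\varepsilon$.

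I do not foresee any obstacle: the argument is entirely formal, it uses only Definition \ref{left-weak adjunction} and the definition $\varepsilon_C:=(1_{GC})^\sharp$, and it is the precise dual of Proposition \ref{q}. The hint diagram shown in the excerpt is simply the horizontal juxtaposition of the two naturality squares above, with $1_{GC}$ placed in its bottom-left corner and $1_{GC'}$ in its bottom-right corner, so that both chases meet at the common element $(Gg)^\sharp$ in the top middle cell.
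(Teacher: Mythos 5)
Your argument is correct and is essentially identical to the paper's proof: the displayed diagram in the paper is precisely the horizontal juxtaposition of your two naturality squares for $(-)^\sharp$, and the paper likewise chases $1_{GC}$ and $1_{GC'}$ to the common element $(Gg)^\sharp$. No gaps; you correctly rely only on the naturality of $(-)^\sharp$ (not of the chosen sections $(-)^\flat$), which is all the definition provides.
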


\begin{proposition}
\label{weak counit equality}
Let us consider an arrow $\arrow{FD}{f}{C}$ of $\mathcal{D}$. Then the equality: $$\varepsilon_CF(f^\flat)=f$$ holds.

\proof
The equality follows by the commutativity of the diagram: $$\xymatrix{\mathcal{C}(FD,C) && \mathcal{C}(FGC,C) \ar[ll]_{(-)(F(f^\flat))} \\ \mathcal{D}(D,GC) \ar[u]^{(-)^\sharp} && \mathcal{D}(GC,GC) \ar[u]^{(-)^\sharp}\ar[ll]_{(-)(f^\flat)}}$$ applied to the arrow $1_{GC}$ and since $(f^\flat)^\sharp=f$.
\endproof
\end{proposition}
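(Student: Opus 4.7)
The statement concerns a left-weak adjunction $F \ladjunction G$ with the transposition $(-)^\sharp \colon \mathcal{D}(D, GC) \to \mathcal{C}(FD, C)$, its chosen section $(-)^\flat$, and counit $\varepsilon_C := (1_{GC})^\sharp$. The identity to prove, $\varepsilon_C F(f^\flat) = f$ for $f \colon FD \to C$, is the analogue of the triangle identity in an ordinary adjunction, adapted to the left-weak setting; the proof is an immediate diagram chase, and the only \emph{bona fide} ingredients are the naturality of $(-)^\sharp$ and the defining equation $(-)^\sharp \circ (-)^\flat = \mathrm{id}$ of the section.

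The plan is as follows. First, fix the arrow $f^\flat \colon D \to GC$, obtained from $f$ via the chosen section. Next, invoke the naturality of $(-)^\sharp$ in the first (contravariant) variable with respect to the arrow $f^\flat \colon D \to GC$: this yields the commutative square
$$
\xymatrix{
\mathcal{C}(FD, C) & & \mathcal{C}(FGC, C) \ar[ll]_{(-)\circ F(f^\flat)} \\
\mathcal{D}(D, GC) \ar[u]^{(-)^\sharp} & & \mathcal{D}(GC, GC) \ar[u]^{(-)^\sharp} \ar[ll]_{(-)\circ f^\flat}
}
$$
(Here I use that $F$ is functorial, so precomposition by $F(f^\flat)$ in $\mathcal{C}$ matches precomposition by $f^\flat$ in $\mathcal{D}$ under the natural transformation $(-)^\sharp$.)

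Then trace the identity arrow $1_{GC}$ of the bottom-right corner around this square in both directions. Going down and to the left yields $\varepsilon_C \circ F(f^\flat)$, since $(1_{GC})^\sharp = \varepsilon_C$ by definition of the counit. Going to the left and then up yields $(f^\flat)^\sharp$, which equals $f$ by the hypothesis that $(-)^\flat$ is a section of $(-)^\sharp$. Commutativity of the square therefore gives $\varepsilon_C \circ F(f^\flat) = f$, as desired.

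There is no real obstacle: the only point requiring a moment's care is confirming that the naturality square produced by $(-)^\sharp$ with respect to $f^\flat$ actually has $(-) \circ F(f^\flat)$ as its top edge (rather than, say, $G$-image data), but this is exactly what naturality in the contravariant variable of $\mathcal{C}(F-,-) \cong \mathcal{D}(-, G-)$ on its $\sharp$-direction says. Once the square is in place, the triangle identity drops out by chasing a single element.
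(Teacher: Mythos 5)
Your proof is correct and is essentially identical to the paper's: you produce the same naturality square for $(-)^\sharp$ with respect to $f^\flat$ and chase $1_{GC}$ around it, using $(1_{GC})^\sharp=\varepsilon_C$ and $(f^\flat)^\sharp=f$. Nothing to add.
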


\begin{proposition} \label{l-w l adjoint preserves w c}
The functor $F$ preserves weak colimits.

\begin{proof}
Let $T$ be a functor $\mathcal{J}\to \mathcal{D}$ and let $\arrow{T}{\gamma}{D}$ be a $\mathcal{J}$-indexed class of arrows in $\mathcal{D}$ exhibiting $D$ as a weak colimit of $T$ in $\mathcal{D}$. Whenever $\arrow{FT}{\delta}{C}$ is a $\mathcal{J}$-indexed class of arrows in $\mathcal{C}$, it is the case that: $$(\arrow{T}{\delta^{\flat}}{GC}):=\{ \arrow{TJ}{\delta_J^\flat}{GC} \}_{J \colon \mathcal{J}}$$ is a $\mathcal{J}$-indexed class of arrows in $\mathcal{D}$. Hence there is an arrow $\arrow{D}{\phi}{GC}$ such that the $\mathcal{J}$-indexed class of triangles: $$\xymatrix{T \ar[r]^{\gamma} \ar[dr]_{\delta^\flat} & D \ar[d]^{\phi} \\ & GC}$$ of $\mathcal{D}$ commutes component-wise. Therefore, by naturality of $(-)^\sharp$ and being the components of $(-)^\flat$ sections of the ones of $(-)^\sharp$, it is the case that the $\mathcal{J}$-indexed class of triangles: $$\xymatrix{FT \ar[r]^{F\gamma} \ar[dr]_{\delta} & FD \ar[d]^{\phi^\sharp} \\ & C}$$ of $\mathcal{C}$ commutes component-wise. We conclude that the $\mathcal{J}$-indexed family $F\gamma$ of arrows of $\mathcal{C}$ exhibits $FD$ as a weak colimit of $FT$ in $\mathcal{C}$.
\end{proof}
\end{proposition}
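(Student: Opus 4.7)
The plan is to dualise the proof of Proposition \ref{r-w r adjoint preserves w l}, swapping cones for cocones and interchanging the roles of $(-)^\sharp$ and $(-)^\flat$. The key structural point is that in a left-weak adjunction the natural transformation runs from $\mathcal{D}(-,G-)$ to $\mathcal{C}(F-,-)$ via $(-)^\sharp$, while $(-)^\flat$ provides only a pointwise section satisfying $(-)^{\flat\sharp}=\id$. These are exactly the ingredients available on the side where one is trying to move information \emph{out} of $\mathcal{D}$ into $\mathcal{C}$, which is what colimit preservation requires.

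Concretely, I would take a diagram $T\colon \mathcal{J}\to \mathcal{D}$ together with a weak colimiting cocone $\arrow{T}{\gamma}{D}$ in $\mathcal{D}$, and an arbitrary cocone $\arrow{FT}{\delta}{C}$ in $\mathcal{C}$; the goal is to produce $\arrow{FD}{\psi}{C}$ with $\psi\circ F\gamma_J = \delta_J$ for each $J$. First I would transport $\delta$ back into $\mathcal{D}$ componentwise by setting $\delta^\flat_J := (\delta_J)^\flat \colon TJ \to GC$. Then, using the weak colimiting property of $\gamma$, I would obtain an arrow $\arrow{D}{\phi}{GC}$ such that $\phi\circ \gamma_J = \delta^\flat_J$ for all $J$. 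Setting $\psi := \phi^\sharp$ and applying naturality of $(-)^\sharp$ together with the section equation gives
\[
\psi \circ F\gamma_J \;=\; \phi^\sharp \circ F\gamma_J \;=\; (\phi\circ\gamma_J)^\sharp \;=\; (\delta^\flat_J)^\sharp \;=\; \delta_J,
\]
which is exactly the required factorisation.

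The step I expect to be most delicate -- and it is the exact mirror of the corresponding step in the proof of Proposition \ref{r-w r adjoint preserves w l} -- is verifying that $\delta^\flat$ really is a cocone on $T$ in $\mathcal{D}$, so that the weak colimit property of $\gamma$ can be applied to it. Since $(-)^\flat$ is only a pointwise section and is not itself assumed natural, this cannot be checked by direct naturality of $(-)^\flat$; instead one has to apply $(-)^\sharp$ to both $\delta^\flat_{J'} \circ T(t)$ and $\delta^\flat_J$ (for $t\colon J \to J'$ in $\mathcal{J}$), use naturality of $(-)^\sharp$ on the first and the section identity $(-)^{\flat\sharp} = \id$ on the second, and then compare using the cocone condition $\delta_{J'}\circ FT(t)=\delta_J$. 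This is precisely the dual of the argument that $\delta^\sharp$ forms a cone in the right-weak case, and once it is in place the rest of the proof is routine transposition across the weak adjunction.
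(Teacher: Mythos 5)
Your main construction coincides with the paper's proof: transpose the cocone $\delta$ componentwise via $(-)^\flat$, factor the resulting family through the weak colimit to obtain $\phi$, and set $\psi:=\phi^\sharp$; the computation $\phi^\sharp\circ F\gamma_J=(\phi\circ\gamma_J)^\sharp=(\delta_J^\flat)^\sharp=\delta_J$ is exactly the paper's appeal to naturality of $(-)^\sharp$ together with the section identity, and it is correct.

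The problem is in the step you yourself flag as delicate. To conclude $\delta^\flat_{J'}\circ T(t)=\delta^\flat_J$ from the equality of the images of these two arrows under $(-)^\sharp$, you would need $(-)^\sharp$ to be injective. But in a left-weak adjunction $(-)^\flat$ is a \emph{section} of $(-)^\sharp$, i.e. $(-)^\sharp\circ(-)^\flat=\id$, so $(-)^\sharp$ is a split epimorphism and it is $(-)^\flat$ that is (split) monic --- and $(-)^\flat$ is not assumed natural, so neither map lets you cancel. Hence your argument does not establish that $\delta^\flat$ is a cocone, and indeed for a non-discrete index category $\mathcal{J}$ it need not be one; the same remark applies to the dual step you invoke from Proposition \ref{r-w r adjoint preserves w l}. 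The paper's own proof silently omits this verification and simply treats $\delta^\flat$ as a ``$\mathcal{J}$-indexed class of arrows''; the omission is harmless in the only situation where the proposition is used, namely weak finite coproducts, where $\mathcal{J}$ is discrete and there is no compatibility condition to check. So your proof is sound for discrete diagrams, but the sketched justification of the cocone condition in general does not go through.
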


 Even though ordinary left adjoints preserve (weak) colimits and ordinary right adjoints preserve (weak) limits, the left-weak adjunctions and the right-weak adjunctions do not necessarily verify both these two properties. It is the case that left-weak left adjoints preserve weak colimits and that right-weak right adjoints preserve weak limits (see Proposition \ref{r-w r adjoint preserves w l} and Proposition \ref{l-w l adjoint preserves w c}), but a priori we cannot say anything about left-weak right adjoint and right weak left adjoints.

However, as our main aim is to work with a weakened notion of adjunction, we do not deviate from this principle if we simply require left-weak right adjoints to preserve at least weak finite products and right-weak left adjoints to preserve at least weak finite coproducts: these properties are verified by ordinary left and right adjoints.

\end{document}